\theoremstyle{plain}
\newtheorem{thm}{Theorem}[section]
\newtheorem{lemma}[thm]{Lemma}
\newtheorem{prop}[thm]{Proposition}
\newtheorem{cor}[thm]{Corollary}
\newtheorem*{thm*}{Theorem 1}
\newtheorem*{lemma*}{Lemma}
\newtheorem*{prop*}{Proposition}
\newtheorem*{cor*}{Corollary}
\newtheorem*{conj*}{Conjecture}
\newtheorem*{ques*}{Question}
\newtheorem*{challenge*}{Challenge}
\newtheorem{remark}[thm]{Remark}
\theoremstyle{definition}
\theoremstyle{remark}
\newcommand{\h}{ {\rm h}} 
\newcommand{\I}{\normalfont{\Romanbar{1}}} 
\newcommand{\II}{\normalfont{\Romanbar{2}}} 
\def\restriction#1#2{\mathchoice
	{\setbox1\hbox{${\displaystyle #1}_{\scriptstyle #2}$}
		\restrictionaux{#1}{#2}}
	{\setbox1\hbox{${\textstyle #1}_{\scriptstyle #2}$}
		\restrictionaux{#1}{#2}}
	{\setbox1\hbox{${\scriptstyle #1}_{\scriptscriptstyle #2}$}
		\restrictionaux{#1}{#2}}
	{\setbox1\hbox{${\scriptscriptstyle #1}_{\scriptscriptstyle #2}$}
		\restrictionaux{#1}{#2}}}
\def\restrictionaux#1#2{{#1\,\smash{\vrule height .8\ht1 depth .85\dp1}}_{\,#2}} 
\title[FEMs for Thin Structures with Folding]{Finite Element Methods for the Stretching and Bending of Thin Structures with Folding}
\author{Andrea Bonito}
\address[Andrea Bonito]{Department of Mathematics, Texas A\&M University, College Station, TX 77845, USA.}
\email{bonito@math.tamu.edu}
\thanks{AB and AM are partially supported by the NSF Grant DMS-2110811. DG is partially supported by the NSERC Grant RGPIN-2021-04311.}
\author{Diane Guignard}
\address[Diane Guignard]{Department of Mathematics and Statistics
	\\ University of Ottawa,
	Ottawa, ON K1N 6N5, Canada.}
\email{dguignar@uottawa.ca}
\author{Angelique Morvant}
\address[Angelique Morvant]{Department of Mathematics, Texas A\&M University, College Station, TX 77845, USA.}
\email{mae4102@tamu.edu}
\date{\today}
\begin{document}

\maketitle

\begin{abstract}
 In \cite{Prestrain_BGNY_2022}, a local discontinuous Galerkin method was proposed for approximating the large bending of prestrained plates, and in \cite{Prestrain_theoretical_BGNY} the numerical properties of this method were explored. These works considered deformations driven predominantly by bending. Thus, a bending energy with a metric constraint was considered. We extend these results to the case of an energy with both a bending component and a nonconvex stretching component, and we also consider folding across a crease. The proposed discretization of this energy features a continuous finite element space, as well as the discrete Hessian used in \cite{Prestrain_BGNY_2022, Prestrain_theoretical_BGNY}. We establish the $\Gamma$-convergence of the discrete to the continuous energy and also present an energy-decreasing gradient flow for finding critical points of the discrete energy. Finally, we provide numerical simulations illustrating the convergence of minimizers and the capabilities of the model.
\end{abstract}
	

\maketitle


\section{Introduction}

The deformation of thin materials occurs widely in nature, from the snapping of the venus flytrap \cite{Flytrap_FSDM,Speck2020} to the growth of leaves and flowers \cite{Wrinkling_BK_2014, Review_LM_2021}. It may also occur in man-made applications, such as the bending of hydrogel discs \cite{KVS_2011, Review_LM_2021} and the movement of microswimmers through the body \cite{Micro_NYA_2015}. In all of these examples, the bending of the material is the result of a prestrain; that is, internal stresses in the flat configuration cause the plate to deform into some more complicated shape. This causes (potentially) large deformations that result from relatively small actuations. 

The deformation of prestrained materials has been studied in several works; see for instance \cite{Wrinkling_BK_2014, Prestrain_BGNY_2022, Prestrain_theoretical_BGNY, NonEuclidPlates_ESK_2009}. Efrati, Sharon, and Kupferman \cite{ NonEuclidPlates_ESK_2009} use Kirchhoff-Love assumptions to derive a 2D energy for prestrained plates with a stretching and a bending component.  In \cite{Prestrain_BGNY_2022}, the energy is a bending energy with stretching enforced by a nonlinear and nonconvex constraint involving the prestrain metric. The reference \cite{BGM_2022} summarizes these and other models for the bending of prestrained plates, including extensions to plates with a bilayer and plates with folding. We refer to \cite{BNY2022a,BNY2022b} for some work where stretching is responsible for the deformation of membranes.

The present work builds on that of \cite{Prestrain_theoretical_BGNY}, in which bending is the main mechanism for the deformation of plates. In \cite{Prestrain_theoretical_BGNY}, the authors seek to solve the following constrained minimization problem:
\begin{equation}
	\min_{{\bf y} \in \tilde{\mathbb{A}}} E^B({\bf y}),
\end{equation}
where 
\begin{equation} \label{eq:bend_energy}
	E^B({\bf y}) := \frac{1}{24} \sum_{m=1}^3 \int_{\Omega}\left(2\mu|{\bf g}^{-1/2}D^2 y_m {\bf g}^{-1/2}|^2 + \frac{2\mu\lambda}{2 \mu+\lambda}\text{tr}\Big({\bf g}^{-1/2}D^2y_m{\bf g}^{-1/2}\Big)^2\right)
\end{equation}
is the \emph{bending energy}. Here, ${\bf y} = (y_m)_{m=1}^3:\Omega\rightarrow\mathbb{R}^3$ denotes a deformation of the midplane $\Omega$ of the plate, ${\bf g}:\Omega\rightarrow\mathbb{R}^{2 \times 2}$ is symmetric, positive definite, and corresponds to the so-called target metric, and $\mu, \lambda$ are the first and second Lam{\'e} coefficients of the material. The admissible set $\tilde{\mathbb{A}}$ is given by
\begin{equation}
	\tilde{\mathbb{A}} := \{{\bf y} \in [H^2(\Omega)]^3 \ | \ \nabla {\bf y}^T \nabla {\bf y} = {\bf g} \text{ a.e. in } \Omega \}.
\end{equation}
The bending energy was derived rigorously using $\Gamma$-convergence in \cite{Bending2_FJM_2002} for the case $g=I_2$ (isometry constraint) and in \cite{lewicka2011} for a general metric $g$. 

We extend the results of \cite{Prestrain_theoretical_BGNY} to the case where both bending and stretching drive the deformation of the plate. Moreover, as in \cite{BBH2021, bartels2022error, BNY2023, BGM_2022}, we allow for folding of the plate along a curve $\Sigma \subset\Omega$. In this case, the energy of the plate takes the form
\begin{equation} \label{eq:bend_stretch_energy}
	E({\bf y}) = E^S({\bf y}) + \theta^2 E^B({\bf y}),
\end{equation}
where
\begin{equation}
	E^S({\bf y}) := \frac{1}{8}\int_{\Omega} \Big( 2\mu |{\bf g}^{-1/2}(\nabla {\bf y}^T \nabla {\bf y}-{\bf g}){\bf g}^{-1/2}|^2 + \lambda \text{tr}({\bf g}^{-1/2}(\nabla {\bf y}^T \nabla {\bf y} - {\bf g}){\bf g}^{-1/2})^2 \Big) 
\end{equation}
is the \emph{stretching energy}, $E^B({\bf y})$ is given by \eqref{eq:bend_energy} with $\Omega$ replaced by $\Omega \setminus \Sigma$, and $\theta>0$ is the thickness of the plate. The admissible set in this case reads 
\begin{equation}
	\mathbb{A} := [H^2(\Omega \setminus \Sigma) \cap H^1(\Omega)]^3
\end{equation}
to account for the presence of the crease.

The derivation of (\ref{eq:bend_stretch_energy}) is similar to that of (\ref{eq:bend_energy}), but we do not take the limit as the thickness $\theta$ of the plate goes to zero. For this reason, (\ref{eq:bend_stretch_energy}) is referred to as the ``preasymptotic" energy. Note that the scalings in (\ref{eq:bend_stretch_energy}) agree with the results of \cite{Hierarchy_FJM_2006}; a scaling with $\theta$ corresponds to stretching, while a scaling with $\theta^3$ corresponds to bending. Both scalings have been proven rigorously by $\Gamma$-convergence (see \cite{Membrane_DR_1995} for membrane theory and \cite{BLS2016, Bending1_FJM_2002, Bending2_FJM_2002, Bending3_FJMM_2003, lewicka2011} for bending theory). 

We discretize \eqref{eq:bend_stretch_energy} using a continuous, piecewise polynomial space and assume that the edges of the subdivision exactly capture creases when present. The Hessian $D^2 {\bf y}$ in the bending energy is replaced with a discrete Hessian $H_h({\bf y}_h)$. This discrete Hessian, defined in Section 3.2, extends the jump of $\nabla {\bf y}_h$ over the (non-crease) edges of each mesh element to all of $\Omega$ and is essential to proving $\Gamma$-convergence. 

Using the above discretization (see Section~\ref{sec:FEM} for details and notation), we get the following discrete energy:
\begin{equation} \label{eq:bend_stretch_energy_discrete}
	E_h({\bf y}_h) = E_h^S({\bf y}_h) + \theta^2 E_h^B({\bf y}_h),
\end{equation}
where
\begin{equation}
	E_h^S({\bf y}_h) := \frac{1}{8}\int_{\Omega} \Big( 2\mu |{\bf g}^{-1/2}(\nabla {\bf y}_h^T \nabla {\bf y}_h-{\bf g}){\bf g}^{-1/2}|^2 + \lambda \text{tr}({\bf g}^{-1/2}(\nabla {\bf y}_h^T \nabla {\bf y}_h - {\bf g}){\bf g}^{-1/2})^2 \Big) 
\end{equation}
and
\begin{equation}
	\begin{split}
		E_h^B({\bf y}_h) := &\frac{1}{24} \int_{\Omega} \Big( 2\mu |{\bf g}^{-1/2} H_h({\bf y}_h){\bf g}^{-1/2}|^2 + \frac{2\mu \lambda}{2\mu + \lambda} \text{tr}({\bf g}^{-1/2} H_h({\bf y}_h) {\bf g}^{-1/2})^2 \Big)\\
		&+ \frac{\gamma}{2} ||\h^{-1/2}[\nabla {\bf y}_h]||_{L^2(\Gamma_h^0{\setminus \Sigma})}^2.
	\end{split}
\end{equation}
Here $\gamma>0$ is a stabilization parameter, $\Gamma_h^0$ is the skeleton of the underlying shape-regular mesh $\mathcal{T}_h$, and $\h$ represents the mesh function defined in equation \eqref{eqn:mesh_function}.

We note that a slight abuse of the definition of $\Gamma$-convergence is made. In this work $\Gamma$-convergence of the discrete energies $E_h$ towards the continuous energy $E$ is a framework consisting of the lim-inf and lim-sup properties provided in Theorems~\ref{thm:liminf} and~\ref{thm:limsup}. Together with the compactness guaranteed by Lemma~\ref{lemma:compactness}, they guarantee (Theorem~\ref{thm:Gamma_conv}) the convergence of discrete minimzers of $\{ E_h \}_{h>0}$ towards a minimizer of $E$.

\subsection{Contributions}
This paper includes several novel contributions. First, it extends the results of \cite{Prestrain_theoretical_BGNY} to the case with a bending and stretching energy. Second, it introduces folding along a crease, and proves in this setting the $\Gamma$-convergence of the discrete energy, along with the weak and strong convergence properties of the discrete Hessian. The case of folding with a stretching energy is of interest because stretching may be required to capture some physical effects when folding is present. For example, consider the flapping device proposed by Bartels, Bonito, and Hornung \cite{bonitoMc} and depicted in Figure \ref{fig:preasym_flapping}. The device is constructed by folding a sheet of paper down the middle and then pushing one end of the paper upward. When the two corners on the left are pushed together (see the arrows in the figure), the opposite end flaps. The flapping device can be simulated using either model described above with metric ${\bf g}={\bf I}_2$, since the paper is assumed to have no prestrain. However, we do not observe the flapping behavior using the model in \cite{Prestrain_BGNY_2022}, which does not allow for stretching, but only with the preasymptotic model (See Figure \ref{fig:preasym_flapping} and Section~\ref{s:flapping} for more details). 

\begin{figure}[htbp!]
	\begin{center}
		\includegraphics[width=2.50cm]{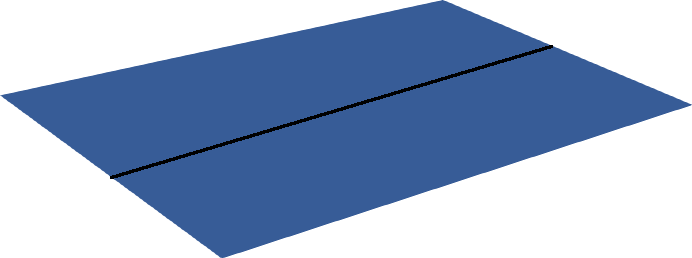}
		\hspace{0.7cm}
		\includegraphics[width=2.75cm]{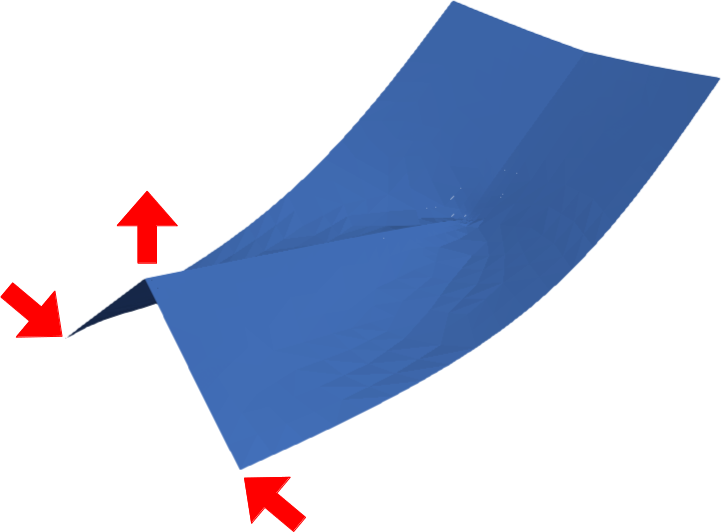}
		\hspace{0.7cm}
		\includegraphics[width=2.50cm]{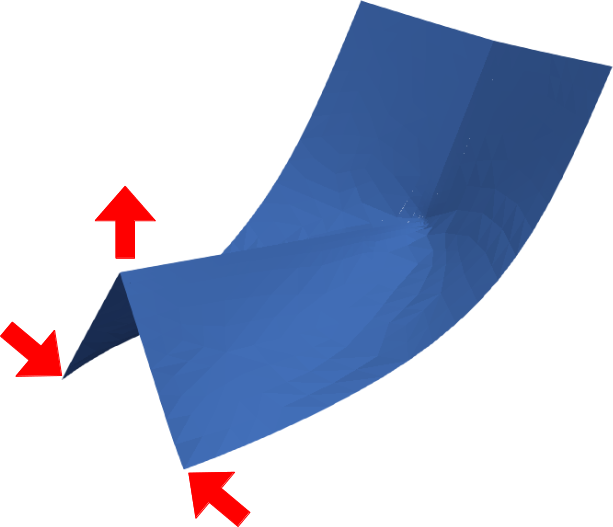} 
		\hspace{0.7cm}
		\includegraphics[width=2.50cm]{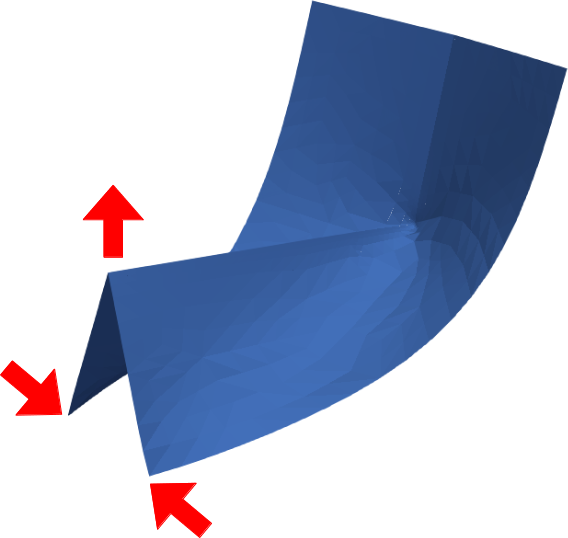} \\
		\vspace{1cm}
		\includegraphics[width=2.50cm]{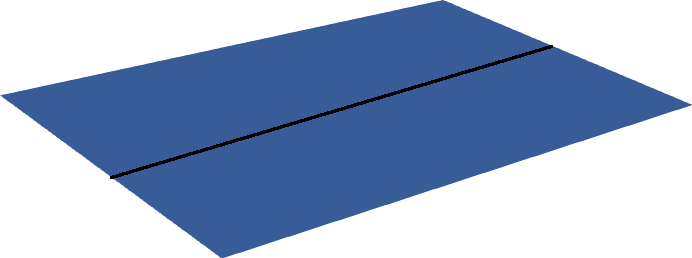}
		\hspace{0.5cm}
		\includegraphics[width=2.65cm]{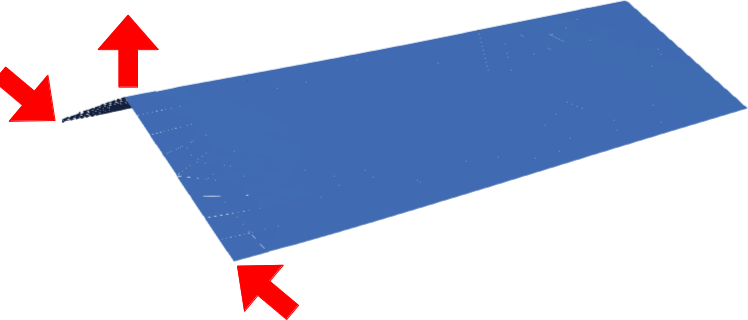}
		\hspace{0.5cm}
		\includegraphics[width=2.65cm]{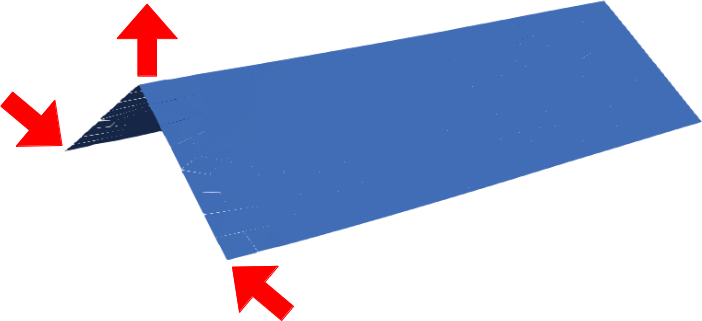}
		\hspace{0.5cm}
		\includegraphics[width=2.65cm]{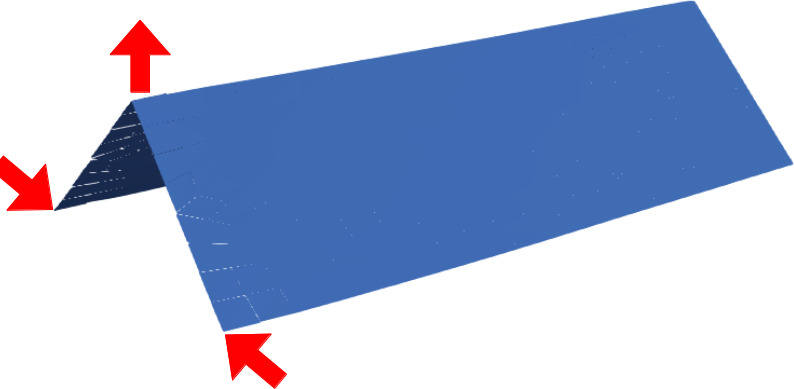}
	\end{center}
	\caption{Evolution of the flapping device as the ends are squeezed. The top row shows the (physically correct) evolution using the preasymptotic model. The bottom row is the evolution using the algorithm proposed in \cite{Prestrain_BGNY_2022} for the minimization of bending energies with isometry constraint.}
	\label{fig:preasym_flapping}
\end{figure}

Introducing the stretching energy also offers advantages because we may use a continuous finite element space to save degrees of freedom, and the minimization algorithm does not require preprocessing to find an initial iterate that satisfies a prescribed metric constraint. Finally, we improve the gradient flow from \cite{BGM_2022} for minimizing (\ref{eq:bend_stretch_energy_discrete}) by treating the stretching term explicitly, which significantly decreases the run-time of simulations. 

\subsection{Outline}
The rest of this work is organized as follows. The derivation of the preasymptotic energy is discussed in Section 2. In Section 3, we introduce a continuous  finite element space. We discuss the discrete Hessian in Section 3.2, along with weak and strong convergence results that are essential for establishing $\Gamma$-convergence of the discrete energy. This discrete energy and its properties are introduced in Section 3.3. Section 4 contains the main result of this paper, namely the $\Gamma$-convergence of the discrete preasymptotic energy to the continuous preasymptotic energy. In Section 5, we present an energy-decreasing gradient flow for minimizing the discrete energy. Finally, Section 6 is devoted to numerical experiments demonstrating the convergence of the discrete minimizers. 

\subsection{Notation}
Bold uppercase letters and bold lowercase letters are used for matrix-valued and vector-valued functions, respectively, and subindices will denote their components. The components of the gradient and the Hessian of a vector-valued function ${\bf v}:\mathbb R^m \rightarrow \mathbb R^n$ are $(\nabla {\bf v})_{ij} := \partial_j v_i$ and $(D^2{\bf v})_{ijk} := \partial_{jk} v_i$, $i=1,..,n$, $j,k=1,...,m$. The Euclidean norm of a vector is denoted $|\cdot|$, while for matrices ${\bf A}, {\bf B}\in\mathbb{R}^{n\times m}$, we write ${\bf A}:{\bf B}:={\rm tr}({\bf B}^T{\bf A})=\sum_{i=1}^{n}\sum_{j=1}^{m}A_{ij}B_{ij}$ and $|{\bf A}|:=\sqrt{{\bf A}:{\bf A}}$. 
Finally, to have a compact notation, for higher-order tensors ${\bf A}=({\bf A}_k)_{k=1}^n\in\mathbb{R}^{n\times m\times m}$ with ${\bf A}_k\in\mathbb{R}^{m\times m}$, $1\le k\le n$, we set
\begin{equation*}
	{\rm tr}({\bf A}) = \big( {\rm tr}({\bf A}_k) \big)_{k=1}^n \quad \mbox{and} \quad |{\bf A}| = \left(\sum_{k=1}^n |{\bf A}_k|^2 \right)^{\frac{1}{2}},
\end{equation*}
and we use the convention
\begin{equation}\label{eqn:conv_tensor}
	{\bf B}{\bf A}{\bf B} := ({\bf B}{\bf A}_k{\bf B})_{k=1}^3  \in \mathbb R^{3 \times 2 \times 2}
\end{equation}
for ${\bf B} \in \mathbb R^{2\times 2}$. Finally, the notation $a\lesssim b$ stands for $a\le Cb$ for a generic constant $C>0$ independent of the discretization parameters and the thickness $\theta>0$.

\section{The Preasymptotic Model for Prestrained Plates}

\subsection{Problem Setting and Preliminaries} \label{section:prelim}
We consider a solid, undeformed plate occupying a region $\Omega_{\theta} = \Omega \times (-\theta/2, \theta/2)$, where $\theta > 0$ is the thickness of the plate and $\Omega \subset \mathbb{R}^2$ is its midplane. Additionally, we introduce a curve $\Sigma \subset \Omega$ representing a crease, where $\Sigma=\emptyset$ if no folding is allowed. For simplicity, we put ourselves in the setting of \cite{BBH2021} and assume that $\Sigma$ is a Lipschitz curve intersecting $\partial \Omega$ transversely and dividing $\Omega$ into two (open) subdomains, denoted $\Omega_1$ and $\Omega_2$. Moreover, we assume that the material has been weakened around the crease so that folding requires no energy (see \cite{BBH2021} for details).
We denote deformations of the plate by ${\bf u}: \Omega_{\theta} \to \mathbb{R}^3$ and deformations of the midplane by ${\bf y}: \Omega \to \mathbb{R}^3$. In this work, thin structures are considered; that is, we assume that $\theta$ is small. As we shall see, this allows us to represent the deformed plate ${\bf u}(\Omega_\theta)$ by the deformation of its two dimensional midplane ${\bf y}(\Omega)$. 

\subsection{Energy Derivation}
The derivation of the preasymptotic model follows closely the derivation of the prestrain model in \cite{Prestrain_BGNY_2022}, with the key difference that we do not take the limit as the thickness of the plate goes to zero. Rather, we assume a small constant thickness $\theta$.
As anticipated in the introduction, the plate energies are composed of competing bending and stretching energies with different scalings in the thickness $\theta$, see \eqref{eq:bend_stretch_energy}. In particular, these plates can stretch and shear.

The reference, or prestrained, metric characterizes the intrinsic stress-free configuration of the plate and is given by 
${\bf G}: \Omega_{\theta} \to  \mathbb{R}^{3 \times 3}$. We assume that ${\bf G}$ is a Riemannian metric and is thus symmetric and uniformly positive definite. It is assumed to be uniform throughout the thickness of the plate. We assume that stretching only occurs along the midplane of the plate, and thus ${\bf G}$ has the following form:
\begin{equation}
	{\bf G}({\bf x}',x_3) := \begin{bmatrix} {\bf g}({\bf x}') &0 \\ 0 & 1  \end{bmatrix}, \qquad {\bf x}':=(x_1,x_2) \in \Omega, \,\, x_3\in\left(-\frac{\theta}{2},\frac{\theta}{2}\right), 
\end{equation}
where ${\bf g}:\Omega \to \mathbb{R}^{2 \times 2}$ is symmetric, uniformly positive definite, and assumed to be in $[W^{1, \infty}(\Omega)]^{2 \times 2}$. 

In the case where ${\bf G}={\bf I}_3$, the stress-free configuration is flat. However, in some materials, such as leaves and flowers, different rates of growth of the various parts of the material result in internal stresses so that ${\bf G} \neq {\bf I}_3$ \cite{Wrinkling_BK_2014, Review_LM_2021}. In the case of hydrogel discs \cite{KVS_2011, Review_LM_2021}, a non-identity metric is introduced by injecting a heat-sensitive gel. We also note that for ${\bf G} \neq {\bf I}_3$, a stress-free configuration of the material may not exist. In this case, the plate may assume a more complex shape with positive energy.

The thin materials are assumed to be endowed with the St.~Venant--Kirchhoff (isotropic) energy in the internal stress-free configuration, i.e. 
\begin{equation}
	E_{\text{3D}}({\bf u}) := \int_{\Omega \times (-\frac{\theta}{2},\frac{\theta}{2})} W(\nabla {\bf u} {\bf G}^{-1/2}), 
\end{equation}
where
\begin{equation} \label{eqn:W_density}
	W({\bf F}) := \mu | \epsilon({\bf F}) |^2 + \frac{\lambda}{2} \text{tr}(\epsilon({\bf F}))^2
\end{equation}
with $\epsilon({\bf F}) := \frac{1}{2}(F^T F-{\bf I}_3)$ for ${\bf F} \in \mathbb{R}^{3 \times 3}$, and where $\mu>0$ and $\lambda\ge 0$ are the Lam\'e coefficients. The proposed dimensionally reduced model for the midplane hinges on the following assumption relating the deformation ${\bf u}$ of the plate and the deformation ${\bf y}$ of its midplane:
\begin{equation}\label{eq:kirchhoff}
	{\bf u} ({\bf x}',x_3) = {\bf y}({\bf x}') + x_3 {\boldsymbol \nu}({\bf x}') + \frac{1}{2} x_3^2\beta({\bf x}') {\boldsymbol \nu}({\bf x}'),
\end{equation}
where 
\begin{equation} \label{eqn:beta_nu}
	\beta := \frac{\lambda}{2\mu+\lambda} \text{tr}({\bf g}^{-1/2} \II({\bf y}){\bf g}^{-1/2}), \qquad {\boldsymbol \nu}:= \frac{\partial_1 {\bf y} \times \partial_2 {\bf y}}{|\partial_1 {\bf y} \times \partial_2 {\bf y}|},
\end{equation}
and $\II({\bf y})$ is the second fundamental form of the surface ${\bf y}(\Omega)$. The first and second fundamental forms of the surface ${\bf y}(\Omega)$, which describe its geometry, are given respectively by 

\begin{equation} \label{eqn:first_second}
	\I({\bf y}) := \nabla {\bf y}^T \nabla {\bf y} \qquad \text{and} \qquad \II({\bf y}) := -\nabla {\boldsymbol \nu}^T \nabla {\bf y}.
\end{equation}
We consider deformations such that $E_{\text{3D}}({\bf u}) \leq \Lambda \theta^3$ to reflect the bending regime of interest.

The modified Kirchoff-Love assumption \eqref{eq:kirchhoff}, inspired by \cite{Hierarchy_FJM_2006, Bending1_FJM_2002}, restricts the fibers orthogonal to the midplane to remain so during deformations, but allows these fibers to be inhomogeneously stretched \cite{Prestrain_BGNY_2022}. Such a structural assumption on the deformations is consistent with the $\Gamma-$limit of the rescaled energy  $\theta^{-3} E_{\text{3D}}({\bf u})$ as $\theta \to 0^+$, namely this is the ansantz yielding the correct limiting energy in the asymptotic analysis, see for instance Section 7 in \cite{Bending2_FJM_2002} (case ${\bf G}={\bf I}_3$) and Section 2.2 in \cite{Prestrain_BGNY_2022} for details.

Computing the energy density and neglecting $\mathcal{O}(\theta^3)$ terms, the 3D energy per unit volume $\theta^{-1} E_{\text{3D}}({\bf u}) =: E({\bf y})$ reads \cite{Prestrain_BGNY_2022,BGM_2022}:
\begin{equation} \label{eq:energy_final}
	E({\bf y}) = E^S({\bf y}) + \theta^2 E^B({\bf y}),
\end{equation}
where the stretching energy $E^S$ is given by
\begin{equation} \label{eqn:EstrII}
	E^S({\bf y}) := \frac{1}{8}\int_{\Omega} \Big( 2\mu |{\bf g}^{-1/2}( \I ({\bf y})-{\bf g}){\bf g}^{-1/2}|^2 + \lambda \text{tr}({\bf g}^{-1/2}(\I({\bf y}) - {\bf g}){\bf g}^{-1/2})^2 \Big) 
\end{equation}
and the bending energy $E^B$ reads
\begin{equation} \label{eqn:EbendII}
	E^B({\bf y}) := \frac{1}{24} \int_{\Omega} \Big( 2\mu |{\bf g}^{-1/2}\II ({\bf y}){\bf g}^{-1/2}|^2 + \frac{2\mu \lambda}{2\mu + \lambda} \text{tr}({\bf g}^{-1/2}\II({\bf y}){\bf g}^{-1/2})^2 \Big).
\end{equation}

Compared to \cite{Prestrain_BGNY_2022}, the midplane deformations ${\bf y}$ are not restricted to strictly satisfy the constraint $\I({\bf y}) = {\bf g}$. Rather, deviations from this relation are penalized by the stretching energy.

To incorporate folding, we change the domain $\Omega$ in \eqref{eqn:EstrII} and \eqref{eqn:EbendII} to the domain $\Omega\setminus \Sigma$, where $\Sigma$ is the Lipschitz curve described in Section \ref{section:prelim}. Note that this modified energy is currently without justification from 3D hyperelasticity; however, a rigorous derivation in the case where ${\bf G}={\bf I}_3$ and $\theta \to 0^+$ is done in \cite{BBH2021}.

In addition, proceeding as in \cite{Wrinkling_BK_2014} (see also \cite{CM2008}), we replace for convenience the second fundamental form by the Hessian $D^2{\bf y}$ in the bending energy. Such an assumption is not geometrically justified, but it is not expected to significantly influence the equilibrium deformations. We present in Section 6.1 below an experiment numerically justifying this step. We also note that using the Hessian is justified by Proposition 1 in \cite{BNY2023} when the deformation satisfies the metric constraint.
Our final bending and stretching energies thus become
\begin{equation} \label{eqn:Ebend}
	E^B({\bf y}) := \frac{1}{24} \int_{\Omega \setminus \Sigma} \Big( 2\mu |{\bf g}^{-1/2}D^2 {\bf y}{\bf g}^{-1/2}|^2 + \frac{2\mu \lambda}{2\mu + \lambda} \text{tr}({\bf g}^{-1/2}D^2 {\bf y}{\bf g}^{-1/2})^2 \Big).
\end{equation}
and
\begin{equation} \label{eqn:Estr}
	E^S({\bf y}) := \frac{1}{8}\int_{\Omega} \Big( 2\mu |{\bf g}^{-1/2}( \I ({\bf y})-{\bf g}){\bf g}^{-1/2}|^2 + \lambda \text{tr}({\bf g}^{-1/2}(\I({\bf y}) - {\bf g}){\bf g}^{-1/2})^2 \Big),
\end{equation}
for ${\bf y} \in [H^2(\Omega \setminus \Sigma) \cap H^1(\Omega)]^3$.

We note that in the absence of boundary conditions, the energies \eqref{eqn:Ebend} and \eqref{eqn:Estr} are frame indifferent up to rigid motions. In the analysis that follows, we will consider only the free boundary case. However, boundary conditions are required to model various phenomena (e.g., the flapping device of Section \ref{s:flapping}). The results of Sections \ref{sec:FEM} and \ref{sec:gamma_conv} extend to the case where Dirichlet conditions are imposed on some portion $\Gamma_D$ of $\partial \Omega$; we refer the reader to Appendix C of \cite{Prestrain_theoretical_BGNY} for the necessary modifications of the proofs.

We also mention that standard results in calculus of variations guarantee the existence of global minimizers to the energy \eqref{eq:energy_final} where $E^B$ is given by \eqref{eqn:Ebend} and $E^S$ by \eqref{eqn:Estr}. Rather than embarking in this discussion, we point out that the $\Gamma$-convergence theory developed below (Theorem~\ref{thm:liminf}) guarantees the existence of such minimizers.

\section{Finite Element Discretization} \label{sec:FEM}

Let $\{ \mathcal{T}_h \}_{h >0}$ be a shape-regular and quasi-uniform sequence of conforming subdivisions of $\Omega$, where every element $T$ is given by the image of a reference triangle or quadrilateral $\hat{T}$ under an isoparametric diffeomorphism $\psi_T: \hat{T} \to T$ of degree $k$. We assume that the crease $\Sigma$ is resolved by the mesh, and for $i=1,2$, we denote by $\mathcal{T}_h^i$ the collection of the elements that belong to $\Omega_i$, the two parts of $\Omega$ separated by $\Sigma$. The set of interior edges is denoted $\mathcal{E}_h^0$ and we write $\mathcal{E}_h^{\Sigma}$  for the set of edges along $\Sigma$. We also let $\Gamma_h^0 := \{{\bf x} \in e \ | \ e \in \mathcal{E}_h^0 \}$ be the interior skeleton of $\mathcal{T}_h$. Finally, we introduce the mesh function $\h$ defined on $\mathcal{T}_h\cup \mathcal{E}_h^0$ by
\begin{equation} \label{eqn:mesh_function}
	\restriction{\h}{T} := h_{T}, \quad \forall\, T\in\mathcal{T}_h, \qquad \text{and} \qquad \restriction{\h}{e} := h_e, \quad \forall\, e\in\mathcal{E}_h^0,
\end{equation}
where $h_T$ and $h_e$ denote the diameter of $T\in \mathcal{T}_h$ and $e\in \mathcal{E}_h^0$, respectively. 

Our finite element space for each component of the deformation is 
\begin{equation*}
	S_h^k := \{v_h \in C^0(\overline{\Omega}) \ | \ {v_h|_T = \hat{v} \circ \psi_T^{-1}, \hat{v} \in \mathbb{P}_k(\hat{T}) \ (\text{resp.} \, \mathbb{Q}_k(\hat{T}))} \  \forall T \in \mathcal{T}_h \},    
\end{equation*}
where $\mathbb{P}_k{(\hat{T})}$ is the space of polynomials of total degree $k$ and $\mathbb{Q}_k(\hat{T})$ is the space of polynomials of degree $k$ in each direction on the reference element (note that to simplify the presentation, we use an isoparametric method, i.e., the same polynomial degree is used for $\psi_T$ and $\hat v$). An approximation of ${\bf y}$ is a deformation ${\bf y}_h\in [S_h^k]^3$ where, from now on, $k\ge 2$. 

Since $\mathcal T_h$ is shape-regular, we have $||D{ \psi_T}||_{L^{\infty}(\hat{T})} \lesssim h_T$ and $||D{\psi_T^{-1}}||_{L^{\infty}(T)} \lesssim h_T^{-1}$. For higher order derivatives, we recall the estimate  \cite{L1986} 
\begin{equation} \label{eq:iso_estI}
	||D^m \psi_T||_{L^{\infty}(\hat{T})} \lesssim h_T^m, \qquad 2 \leq m \leq k+1.
\end{equation}
Since $||D^2 \psi_T||_{L^{\infty}(\hat{T})} \lesssim ||D \psi_T||_{L^{\infty}(\hat{T})}$, the following estimates on the inverse mapping hold \cite{EG2021}:
\begin{equation} \label{eq:iso_estII}
	||D^m \psi_T^{-1}||_{L^{\infty}(T)} \lesssim h_T^{-m}, \qquad 2 \leq m \leq k+1.
\end{equation}

Although a function $v_h \in S_h^k$ is continuous over the domain $\Omega$, the gradient $\nabla v_h$ is discontinuous on $\Gamma_h^0$. Thus, $\nabla v_h$ has, in general, nonzero jump over each edge of $\mathcal{E}_h^0$. We make these notions more precise now. For any function $w_h$ defined on $\Omega$, for $e\in{\mathcal E}_h^0$ and ${\bf x} \in e$ we set $w_h^{\pm}({\bf x }):=\lim_{s\to 0^+}w_h({\bf x} \pm s{\bf n}_e)$, where ${\bf n}_e$ is a fixed unit normal to $e$.  The jump of $w_h$ over $e$ is then given by
\begin{equation}\label{eq:jump}
	\restriction{[w_h]}{e} := w_h^-- w_h^+ 
\end{equation}
and the average is defined as 
\begin{equation} \label{eq:avg}
	\restriction{\{w_h\}}{e} := \frac{1}{2}(w_h^+ + w_h^-).
\end{equation}
For a vector or matrix valued function, the jump and average are defined componentwise. In paricular, for ${\bf v}_h \in [S_h^k]^3$, the jump and average of $\nabla {\bf v}_h$ are defined componentwise with $w_h = \partial_j v_{h,i}$, $i=1,2,3$, $j=1,2$.

Using the jump operator, we define the bilinear form $\langle \cdot, \cdot \rangle_{H_h^2(\Omega)}$ on $S_h^k$ by
\begin{equation}\label{eq:bilinear_form}
	\langle v_h, w_h \rangle_{H_h^2(\Omega)} := (D_h^2v_h, D_h^2 w_h )_{L^2(\Omega)} + (\h^{-1} [\nabla v_h], [\nabla w_h])_{L^2(\Gamma_h^0 \setminus \Sigma)}
\end{equation}
for $v_h, w_h \in S_h^k$, where $D_h^2$ is the broken (aka piecewise) Hessian, and the seminorm $| \cdot |_{H_h^2(\Omega)}$ is given by
\begin{equation}
	|v_h|_{H_h^2(\Omega)}^{2} := \langle v_h, v_h \rangle_{H_h^2(\Omega)}
\end{equation}
for $v_h \in S_h^k$. 
From this we define the scalar product
\begin{equation}
	(v_h, w_h)_{H_h^2(\Omega)} := \langle v_h, w_h \rangle_{H_h^2(\Omega)} + (v_h, w_h)_{L^2(\Omega)}
\end{equation}
and the norm 
\begin{equation} \label{def:discrete_H2_norm}
	||v_h||_{H_h^2(\Omega)}^2 := (v_h, v_h)_{H_h^2(\Omega)}.
\end{equation}
For ${\bf v}_h$, ${\bf w}_h \in [S_h^k]^3$ we define $\langle {\bf v}_h, {\bf w}_h \rangle_{H_h^2(\Omega)} = \sum_{i=1}^3 \langle v_{h,i}, w_{h,i}\rangle$, and similarly for the seminorm, scalar product, and norm.

\subsection{Compactness}
We next prove a compactness result that will be used in the proof of $\Gamma$-convergence. In order to establish this result, we need the following smoothing interpolation operator $\Pi_h$ inspired by \cite{Prestrain_theoretical_BGNY}, but applied on each domain $\Omega_i$, $i=1,2$, and adapted to the isoparametric case: 
\begin{equation} \label{def:Pi_h}
	\text{for } v\in \mathbb{E}(\mathcal{T}_h):= \prod_{T \in \mathcal{T}_h} H^1(T), \quad \restriction{(\Pi_h v)}{\Omega_i}:=\Pi_h^i(\restriction{v}{\Omega_i})\in S_h^k(\Omega_i), \quad i=1,2,    
\end{equation}
where $S_h^k(\Omega_i)$ is the restriction of $S_h^k$ to $\Omega_i$ and $\Pi_h^i$ is defined in two steps as follows. First, we construct $P_h^iv\in L^2(\Omega_i)$ given elementwise by $\restriction{(P_h^iv)}{T} = \hat P(v\circ\psi_T)\circ\psi_T^{-1}$ with $\hat P$ the $L^2(\hat{T})$ projection onto $\mathbb{P}_k(\hat{T})$ ($\mathbb{Q}_k(\hat T)$ for quadrilaterals). Then we apply a Cl\'ement (or Scott-Zhang) interpolant $I_h^i:L^2(\Omega_i)\rightarrow S_h^k(\Omega_i)$ to $P_h^iv$ defined via local $L^2$ projections on references patches.

The operator $\Pi_h$ satisfies for $v \in \mathbb{E}(\mathcal{T}_h)$
\begin{equation}\label{eq:interp_prop_1}
	||\Pi_h v ||_{L^2(\Omega)} \lesssim ||v||_{L^2(\Omega)}
\end{equation}
and
\begin{equation}\label{eq:interp_prop_2}
	||\nabla_h \Pi_h v||_{L^2(\Omega)} + ||\h^{-1}(v-\Pi_h v)||_{L^2(\Omega)} \lesssim ||\nabla_h v||_{L^2(\Omega)} + ||\h^{-1/2}[v]||_{L^2(\Gamma_h^0 \setminus \Sigma)},
\end{equation}
where $\nabla_h$ denotes the broken gradient. We omit the proofs of these relations since they follow from the arguments provided in \cite{Prestrain_theoretical_BGNY,Conv_adaptive_dG_BN_2010,DG_largebending_BNN_2021} applied to each subdomain $\Omega_i$ and adding the resulting two inequalities.

We also need the following discrete Poincar{\' e}--Friedrichs inequalities. Such inequalities are proven in \cite{Prestrain_theoretical_BGNY}, where the approximation space $S^k_h$ is fully discontinuous, but the argument applies directly to the current context. 

\begin{lemma}[Discrete Poincar{\'e}--Friedrichs inequality] \label{lemma:disc_poincare}
	For any $v \in \mathbb{E}(\mathcal{T}_h)$, 
	\begin{equation} \label{eq:disc_poincare_1}
		||v - \strokedint_{\Omega} v ||_{L^2(\Omega)} \lesssim ||\nabla_h v||_{L^2(\Omega)}+||\h^{-1/2}[v]||_{L^2(\Gamma_h^0)},
	\end{equation}
	where $\strokedint_{\Omega} v$ denotes the average of $v$ over $\Omega$.
	Also, for $v_h \in S_h^k$ we have
	\begin{equation} \label{eq:disc_poincare_2}
		||\nabla v_h||_{L^2(\Omega)}+||\nabla_h\Pi_h v_h||_{L^2(\Omega)} \lesssim \|v_h\|_{H_h^2(\Omega)}.
	\end{equation}
\end{lemma}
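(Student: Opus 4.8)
\textbf{Proof plan for Lemma~\ref{lemma:disc_poincare}.}

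The plan is to establish the two inequalities in turn. For \eqref{eq:disc_poincare_1}, I would simply invoke the discrete Poincar\'e--Friedrichs inequality from \cite{Prestrain_theoretical_BGNY}. The only subtlety is that in \cite{Prestrain_theoretical_BGNY} the discrete space is fully discontinuous, whereas here $S_h^k\subset C^0(\overline\Omega)$; but since the statement is for arbitrary $v\in\mathbb{E}(\mathcal{T}_h)=\prod_{T}H^1(T)$ (not just for finite element functions), the continuity of $S_h^k$ plays no role, and the broken Poincar\'e argument — which typically proceeds by a compactness/contradiction argument or by controlling $v$ via its broken gradient plus the jump contributions across $\Gamma_h^0$ — carries over verbatim. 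So this part is essentially a citation. Note that the jump seminorm here is over the full skeleton $\Gamma_h^0$ (including edges along $\Sigma$), consistent with applying the inequality on $\Omega$ as a whole rather than on $\Omega_1,\Omega_2$ separately.

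For \eqref{eq:disc_poincare_2}, the first summand $\|\nabla v_h\|_{L^2(\Omega)}$ is immediate: since $v_h\in S_h^k\subset H^1(\Omega)$, its (genuine) gradient $\nabla v_h$ equals the broken gradient $\nabla_h v_h$, and $\|\nabla_h v_h\|_{L^2(\Omega)}^2$ is dominated by $|v_h|_{H_h^2(\Omega)}^2\le\|v_h\|_{H_h^2(\Omega)}^2$ — the control of a first derivative by a second derivative on a finite element space being a standard discrete Poincar\'e--Friedrichs-type bound. Concretely, I would apply \eqref{eq:disc_poincare_1} with $v$ replaced componentwise by $\partial_j v_h$ (which lies in $\mathbb{E}(\mathcal{T}_h)$): this gives $\|\partial_j v_h - \fint_\Omega \partial_j v_h\|_{L^2(\Omega)}\lesssim \|\nabla_h(\nabla v_h)\|_{L^2(\Omega)} + \|\h^{-1/2}[\nabla v_h]\|_{L^2(\Gamma_h^0)}$; since $\nabla_h(\nabla v_h) = D_h^2 v_h$ and, using that $v_h$ is continuous, the jump $[\nabla v_h]$ is purely tangential-free so only edges in $\Gamma_h^0\setminus\Sigma$ contribute (the jumps along $\Sigma$ being unconstrained — but here we need the full skeleton, so one either absorbs the $\Sigma$ contribution or, more carefully, observes the bound is stated with $\Gamma_h^0\setminus\Sigma$ in $|\cdot|_{H_h^2(\Omega)}$ and treats $\Sigma$-jumps of $\nabla v_h$ by noting $v_h\in H^1$). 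To close the mean-value gap I bound $|\fint_\Omega \partial_j v_h|$ using that $\partial_j v_h$ is a genuine $L^2$ function whose integral over $\Omega$ equals a boundary term, or — cleaner — I bound $\|\nabla v_h\|_{L^2(\Omega)}$ directly via a Poincar\'e inequality for $H^1$ functions modulo constants applied to $v_h$ itself together with the observation that $\|v_h - \fint_\Omega v_h\|$ is already controlled.

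For the second summand $\|\nabla_h\Pi_h v_h\|_{L^2(\Omega)}$, I would apply property \eqref{eq:interp_prop_2} of the smoothing operator with $v = v_h$: this yields $\|\nabla_h\Pi_h v_h\|_{L^2(\Omega)}\lesssim \|\nabla_h v_h\|_{L^2(\Omega)} + \|\h^{-1/2}[v_h]\|_{L^2(\Gamma_h^0\setminus\Sigma)}$. Since $v_h$ is continuous across $\Gamma_h^0$, the jump term $[v_h]$ vanishes on $\Gamma_h^0\setminus\Sigma$ (and on all interior edges), so the right-hand side reduces to $\|\nabla_h v_h\|_{L^2(\Omega)} = \|\nabla v_h\|_{L^2(\Omega)}$, which we have just bounded by $\|v_h\|_{H_h^2(\Omega)}$. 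Adding the two estimates gives \eqref{eq:disc_poincare_2}. The main obstacle is purely bookkeeping: correctly tracking whether jump seminorms run over $\Gamma_h^0$ or $\Gamma_h^0\setminus\Sigma$ and exploiting the $C^0$-conformity of $S_h^k$ to kill the $[v_h]$ jump terms and identify $\nabla_h v_h$ with $\nabla v_h$ — there is no deep analytic difficulty once \eqref{eq:disc_poincare_1}, \eqref{eq:interp_prop_2}, and the results of \cite{Prestrain_theoretical_BGNY} are in hand.
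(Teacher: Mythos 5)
Your treatment of \eqref{eq:disc_poincare_1} matches the paper exactly: the paper gives no argument beyond citing \cite{Prestrain_theoretical_BGNY} and observing that the fully discontinuous setting there covers arbitrary $v\in\mathbb{E}(\mathcal{T}_h)$, so that part is fine. Your use of \eqref{eq:interp_prop_2} with $v=v_h$ and $[v_h]=0$ to reduce the $\|\nabla_h\Pi_h v_h\|_{L^2(\Omega)}$ term to $\|\nabla v_h\|_{L^2(\Omega)}$ is also correct.

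The weak point is your derivation of the bound on $\|\nabla v_h\|_{L^2(\Omega)}$. Applying \eqref{eq:disc_poincare_1} componentwise to $\partial_j v_h$ on all of $\Omega$ produces the jump term $\|\h^{-1/2}[\nabla v_h]\|_{L^2(\Gamma_h^0)}$ over the \emph{full} skeleton, and the edges on $\Sigma$ are not controlled by $\|v_h\|_{H_h^2(\Omega)}$, whose jump part runs only over $\Gamma_h^0\setminus\Sigma$. Your proposed justification that these $\Sigma$-jumps can be handled "by noting $v_h\in H^1$" does not work: continuity of $v_h$ only annihilates the tangential component of $[\nabla v_h]$, while the normal component may jump across $\Sigma$ (indeed this is precisely the freedom the model needs at the crease). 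The correct repair — consistent with how the paper systematically adapts the arguments of \cite{Prestrain_theoretical_BGNY} (e.g.\ the definition of $\Pi_h$) — is to apply the broken Poincar\'e--Friedrichs inequality separately on each subdomain $\Omega_1,\Omega_2$, so that only edges in $\Gamma_h^0\setminus\Sigma$ ever appear, and then to control the subdomain means. For that last step, your "cleaner" alternative is circular: a Poincar\'e inequality bounds $\|v_h-c\|_{L^2}$ by $\|\nabla v_h\|_{L^2}$, not the reverse, so it cannot produce the needed bound on $\|\nabla v_h\|_{L^2}$. Your first suggestion is the right one: write the mean of $\partial_j v_h$ over $\Omega_i$ as a boundary integral of $v_h$ via the divergence theorem, bound it with a (multiplicative) trace inequality, and absorb the resulting small multiple of $\|\nabla v_h\|_{L^2(\Omega_i)}$ into the left-hand side, using that the right-hand side of \eqref{eq:disc_poincare_2} is the full norm $\|v_h\|_{H_h^2(\Omega)}$ and hence contains $\|v_h\|_{L^2(\Omega)}$. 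With these two corrections the argument closes; as written, the $\Sigma$-edge step would fail and the mean-value step is not justified.
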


Now we can verify the following compactness result.

\begin{lemma}[Compactness] \label{lemma:compactness}
	Assume that $\{ v_h \}_{h>0} \subset S_h^k$ is a sequence such that 
	\begin{equation}\label{eq:bound}
		||\nabla v_h||_{L^2(\Omega)} + |v_h|_{H^2_h(\Omega)} \lesssim 1.
	\end{equation}
	Then there exists $\bar{v} \in H^2(\Omega \setminus \Sigma)\cap H^1(\Omega)$ with mean value zero such that (up to a subsequence not indicated here) 
	$$
	\bar v_h := v_h - \strokedint_{\Omega} v_h \to \bar{v} \quad \textrm{in}\quad L^2(\Omega) \quad \textrm{and}\quad \nabla v_h \to \nabla \bar{v} \quad \textrm{in}\quad [L^2(\Omega)]^{2} \quad \textrm{as }h \to 0^+.
	$$
\end{lemma}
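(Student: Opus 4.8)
The plan is to extract the limit $\bar v$ on each subdomain $\Omega_i$ separately using the smoothed interpolant $\Pi_h$, and then to glue the two pieces together by controlling the jump across the interior skeleton (excluding $\Sigma$). First I would set $\bar v_h := v_h - \strokedint_\Omega v_h$ and apply the discrete Poincar\'e--Friedrichs inequality \eqref{eq:disc_poincare_1} (with $[v_h]=0$ on $\Gamma_h^0$ since $v_h\in S_h^k\subset C^0(\overline\Omega)$) to deduce that $\|\bar v_h\|_{L^2(\Omega)}\lesssim\|\nabla v_h\|_{L^2(\Omega)}\lesssim 1$; combined with the hypothesis this gives $\|\bar v_h\|_{H^1(\Omega)}\lesssim 1$, so up to a subsequence $\bar v_h\rightharpoonup\bar v$ weakly in $H^1(\Omega)$, strongly in $L^2(\Omega)$ by Rellich, and $\nabla v_h\rightharpoonup\nabla\bar v$ weakly in $[L^2(\Omega)]^2$. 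This already yields $\bar v\in H^1(\Omega)$ with mean zero and the asserted $L^2$-convergence; the remaining work is to upgrade to strong convergence of the gradients and to show $\bar v\in H^2(\Omega\setminus\Sigma)$.

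Next I would bring in the smoothing operator $\Pi_h$ restricted to each $\Omega_i$. Since the bound \eqref{eq:bound} controls $\|\h^{-1/2}[\nabla v_h]\|_{L^2(\Gamma_h^0\setminus\Sigma)}$ through $|v_h|_{H^2_h(\Omega)}$, the estimate \eqref{eq:interp_prop_2} applied to $\nabla v_h$ componentwise (on each $\Omega_i$) gives
\begin{equation*}
	\|\nabla_h\Pi_h^i(\nabla v_h)\|_{L^2(\Omega_i)} + \|\h^{-1}(\nabla v_h-\Pi_h^i\nabla v_h)\|_{L^2(\Omega_i)} \lesssim \|D_h^2 v_h\|_{L^2(\Omega_i)} + \|\h^{-1/2}[\nabla v_h]\|_{L^2(\Gamma_h^0\setminus\Sigma)} \lesssim 1.
\end{equation*}
Set $\mathbf{w}_h^i := \Pi_h^i(\nabla v_h)\in[S_h^k(\Omega_i)]^2$. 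Then $\mathbf{w}_h^i$ is bounded in $H^1(\Omega_i)$ (using also that $\|\mathbf{w}_h^i\|_{L^2}\lesssim\|\nabla v_h\|_{L^2}\lesssim 1$ by \eqref{eq:interp_prop_1}), hence converges weakly in $H^1(\Omega_i)$ and strongly in $L^2(\Omega_i)$ along a subsequence; moreover $\|\h^{-1}(\nabla v_h - \mathbf{w}_h^i)\|_{L^2(\Omega_i)}\lesssim 1$ forces $\nabla v_h - \mathbf{w}_h^i\to 0$ in $L^2(\Omega_i)$ as $h\to0^+$. Therefore $\nabla v_h\to$ (same strong $L^2$ limit as $\mathbf{w}_h^i$) on $\Omega_i$, which by uniqueness of the weak limit must equal $\nabla\bar v$ restricted to $\Omega_i$. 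This gives the strong convergence $\nabla v_h\to\nabla\bar v$ in $[L^2(\Omega)]^2$ (sum over $i=1,2$). Finally, passing to the weak-$H^1(\Omega_i)$ limit of $\mathbf{w}_h^i$ shows $\nabla\bar v\restriction{}{\Omega_i}\in[H^1(\Omega_i)]^2$, i.e. $\bar v\in H^2(\Omega_i)$; since this holds for $i=1,2$ we get $\bar v\in H^2(\Omega\setminus\Sigma)$, and $\bar v\in H^1(\Omega)$ was already established.

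The main obstacle I anticipate is the interface handling: one must be careful that the operator $\Pi_h$ acts on each $\Omega_i$ independently (which is exactly why \eqref{def:Pi_h} was set up that way), so that the jump of $\nabla v_h$ across the edges lying on $\Sigma$ is never penalized — this is consistent with the fact that the target regularity is only $H^2(\Omega\setminus\Sigma)$, not $H^2(\Omega)$, and that $\bar v$ is allowed to have a crease in its gradient along $\Sigma$ while staying in $H^1(\Omega)$. A secondary technical point is to verify that the limit of $\mathbf{w}_h^i$ really is $\nabla\bar v\restriction{}{\Omega_i}$: this follows because $\mathbf{w}_h^i$ and $\nabla v_h$ have the same $L^2(\Omega_i)$ limit (their difference tends to zero), and $\nabla v_h\rightharpoonup\nabla\bar v$ weakly in $L^2(\Omega)$ pins down that common limit. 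One also needs to keep the subsequence extractions organized (a single diagonal subsequence works for the $L^2$ convergence of $\bar v_h$, the weak $H^1$ limits, and the two $\mathbf{w}_h^i$), but this is routine.
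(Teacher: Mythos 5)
Your proof is correct and follows essentially the same route as the paper: a Poincar\'e--Friedrichs/Rellich argument for $\bar v_h$ in $H^1(\Omega)$, then the smoothed interpolant $\Pi_h(\nabla v_h)$ with the stability bounds \eqref{eq:interp_prop_1}--\eqref{eq:interp_prop_2} to get piecewise-$H^1$ compactness and the $\h^{-1}$ estimate to transfer strong $L^2$ convergence back to $\nabla v_h$. The only cosmetic differences are that you make the per-subdomain action of $\Pi_h$ explicit and identify the limit via the weak $H^1(\Omega)$ convergence of $\bar v_h$ rather than via the $L^2$ convergence of $v_h-\strokedint_\Omega v_h$, which is an equivalent identification.
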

\begin{proof}
	The proof follows those of Lemma 2.2 in \cite{Prestrain_theoretical_BGNY} and Proposition 5.1 in \cite{DG_largebending_BNN_2021}, which were written in the context of fully discontinuous approximation spaces.
	Let  $c_h := \strokedint_{\Omega} v_h$. By the standard Poincar{\' e}-Friedrichs inequality and the boundedness assumption \eqref{eq:bound}, we have 
	$$
	||v_h- c_h ||_{L^2(\Omega)} + ||\nabla v_h||_{L^2(\Omega)} \lesssim ||\nabla v_h||_{L^2(\Omega)} \lesssim 1.$$
	Thus, $v_h - c_h$ is uniformly bounded in $H^1(\Omega)$ and therefore converges strongly (up to a subsequence) in $L^2(\Omega)$ to some $\bar{v} \in H^1(\Omega)$.
	
	To show that $\nabla v_h \to \nabla \bar{v}$, we make use of the smoothing interpolation operator defined above. Let ${\bf z}_h := \Pi_h (\nabla v_h)$ with $\Pi_h$ applied componentwise. The stability estimates (\ref{eq:interp_prop_1}) and (\ref{eq:interp_prop_2}) together with the boundedness assumption \eqref{eq:bound} again yield
	\begin{equation}
		||{\bf z}_h||_{L^2(\Omega)} + ||\nabla_h {\bf z}_h||_{L^2(\Omega)} \lesssim ||\nabla v_h||_{L^2(\Omega)} +  ||D_h^2 v_h||_{L^2(\Omega)} + ||\h^{-1/2}[\nabla v_h]||_{L^2(\Gamma_h^0 \setminus \Sigma)} \lesssim 1.
	\end{equation}
	This implies that ${\bf z}_h$ is uniformly bounded in $[H^1(\Omega  \setminus \Sigma)]^{2}$ and thus converges strongly (up to a subsequence) in $[L^2(\Omega)]^{2}$ to some ${\bf z} \in [H^1(\Omega \setminus \Sigma)]^{2}$.
	
	We now establish the convergence of $\nabla v_h$ to ${\bf z}$ in $[L^2(\Omega)]^{2}$. Recalling the notation $h = \max_{{\bf x} \in \Omega} \h({\bf x})$ and using the stability estimate (\ref{eq:interp_prop_2}) along with the uniform boundedness assumption \eqref{eq:bound}, we get
	\begin{align*}
		||\nabla v_h - {\bf z}_h||_{L^2(\Omega)} &\le h ||\h^{-1}(\nabla v_h - {\bf z}_h)||_{L^2(\Omega)} \\
		&\lesssim h \Big(||D_h^2 v_h||_{L^2(\Omega)} + ||\h^{-1/2}[\nabla v_h]||_{L^2(\Gamma_h^0 \setminus \Sigma)} \Big) \\
		&\lesssim h.
	\end{align*}
	Therefore, we have
	\begin{equation*}
		||\nabla v_h - {\bf z}||_{L^2(\Omega)} \leq ||\nabla v_h - {\bf z}_h||_{L^2(\Omega)} + ||{\bf z}_h -{\bf z}||_{L^2(\Omega)} \to 0
	\end{equation*}
	as $h \to 0^+$, and thus $\nabla v_h \to {\bf z}$ in $[L^2(\Omega)]^2$ as $h \to 0^+$. This and the fact that $v_h-c_h \to \bar{v}$ in $L^2(\Omega)$ imply that ${\bf z} = \nabla \bar{v}$, completing the proof.
\end{proof}

We end this section with a discrete Sobolev inequality, which is instrumental for handling the nonconvex stretching energy.

\begin{lemma} [Discrete Sobolev inequality] \label{lemma:disc_sobolev}
	For ${\bf v}_h \in [S_h^k]^3$, the following inequality holds
	\begin{equation} \label{eq:disc_sobolev}
		||\nabla {\bf v}_h ||_{L^4(\Omega)} \lesssim ||{\bf v}_h||_{H_h^2(\Omega)}.
	\end{equation}
\end{lemma}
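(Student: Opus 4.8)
The plan is to reduce the vector-valued statement to its scalar components and then prove, for each $v_h \in S_h^k$, the estimate $\|\nabla v_h\|_{L^4(\Omega)} \lesssim \|v_h\|_{H_h^2(\Omega)}$. Since $\nabla {\bf v}_h$ has components $\partial_j v_{h,i}$ with $i=1,2,3$, $j=1,2$, and the $H_h^2$-norm on $[S_h^k]^3$ is the sum over components, it suffices to bound $\|\partial_j v_{h,i}\|_{L^4(\Omega)}$. The key idea is to pass through the smoothed gradient: set ${\bf z}_h := \Pi_h(\nabla v_h)$ (componentwise), which is a genuine $H^1(\Omega_i)$ function on each subdomain, and compare $\nabla v_h$ to ${\bf z}_h$.

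First I would apply the continuous Sobolev embedding $H^1 \hookrightarrow L^4$ in two dimensions on each subdomain $\Omega_i$ (which requires only Lipschitz boundary, satisfied since $\Sigma$ is Lipschitz and intersects $\partial\Omega$ transversely), to get $\|{\bf z}_h\|_{L^4(\Omega)} \lesssim \|{\bf z}_h\|_{L^2(\Omega)} + \|\nabla_h {\bf z}_h\|_{L^2(\Omega)}$. Then the stability estimates \eqref{eq:interp_prop_1} and \eqref{eq:interp_prop_2}, exactly as used in the proof of Lemma~\ref{lemma:compactness}, give $\|{\bf z}_h\|_{L^2(\Omega)} + \|\nabla_h {\bf z}_h\|_{L^2(\Omega)} \lesssim \|\nabla v_h\|_{L^2(\Omega)} + \|D_h^2 v_h\|_{L^2(\Omega)} + \|\h^{-1/2}[\nabla v_h]\|_{L^2(\Gamma_h^0\setminus\Sigma)}$, and by the Poincar\'e--Friedrichs inequality \eqref{eq:disc_poincare_2} this last quantity is $\lesssim \|v_h\|_{H_h^2(\Omega)}$. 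So $\|{\bf z}_h\|_{L^4(\Omega)} \lesssim \|v_h\|_{H_h^2(\Omega)}$.

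Second I would control the discrepancy $\nabla v_h - {\bf z}_h$ in $L^4$. On each element $T$, $\nabla v_h - {\bf z}_h$ is a polynomial (after pulling back through $\psi_T$), so a local inverse estimate gives $\|\nabla v_h - {\bf z}_h\|_{L^4(T)} \lesssim h_T^{-1/2}\|\nabla v_h - {\bf z}_h\|_{L^2(T)}$ in two dimensions; summing over $T$ and using $\h \lesssim h \le 1$ (or more carefully, summing the local estimates and using shape-regularity/quasi-uniformity) yields $\|\nabla v_h - {\bf z}_h\|_{L^4(\Omega)} \lesssim \|\h^{-1/2}(\nabla v_h - {\bf z}_h)\|_{L^2(\Omega)}$. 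The right-hand side is then bounded, via \eqref{eq:interp_prop_2} applied to $v = \nabla v_h$ and then \eqref{eq:disc_poincare_2}, by $\|D_h^2 v_h\|_{L^2(\Omega)} + \|\h^{-1/2}[\nabla v_h]\|_{L^2(\Gamma_h^0\setminus\Sigma)} \lesssim \|v_h\|_{H_h^2(\Omega)}$. Combining the two bounds via the triangle inequality $\|\nabla v_h\|_{L^4(\Omega)} \le \|\nabla v_h - {\bf z}_h\|_{L^4(\Omega)} + \|{\bf z}_h\|_{L^4(\Omega)}$ finishes the proof.

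The main obstacle I anticipate is the bookkeeping in the $L^4$ inverse estimate on isoparametric elements: one must verify that the inverse inequality $\|p\|_{L^4(T)} \lesssim h_T^{-1/2}\|p\|_{L^2(T)}$ survives the pullback $\psi_T$, using \eqref{eq:iso_estI}--\eqref{eq:iso_estII} and the shape-regularity bounds on $\|D\psi_T\|_{L^\infty}$ and $\|D\psi_T^{-1}\|_{L^\infty}$ to compare norms on $T$ and on $\hat T$; the Jacobian factors contribute powers of $h_T$ that must be tracked so that quasi-uniformity lets one pass from the elementwise bounds to the global one. A secondary point is ensuring the continuous embedding $H^1(\Omega_i)\hookrightarrow L^4(\Omega_i)$ applies with a constant independent of $h$, which is immediate since $\Omega_1,\Omega_2$ are fixed Lipschitz domains; the embedding constant does not depend on the mesh. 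Everything else is a repetition of techniques already deployed in Lemma~\ref{lemma:compactness}.
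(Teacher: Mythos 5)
Your proposal is correct and follows essentially the same route as the paper: split $\nabla v_h$ into the smoothed part $\Pi_h(\nabla v_h)$, handled by the $H^1(\Omega_i)\hookrightarrow L^4(\Omega_i)$ embedding together with the stability bounds \eqref{eq:interp_prop_1}--\eqref{eq:interp_prop_2} and \eqref{eq:disc_poincare_2}, and the remainder, handled by an inverse-type estimate. The only cosmetic difference is that the paper states an intermediate broken Sobolev bound \eqref{eq:intermediate_disc_sobolev} for general $v\in\mathbb{E}(\mathcal{T}_h)$ and controls $v-\Pi_h v$ via an $H^1$ inverse inequality plus the embedding, whereas you use a direct elementwise $L^4$--$L^2$ inverse estimate (your flagged isoparametric bookkeeping goes through, e.g.\ by writing $(\nabla v_h-{\bf z}_h)\circ\psi_T=(D\psi_T)^{-T}\bigl(\widehat\nabla\hat v-(D\psi_T)^{T}\hat z\bigr)$ so that norm equivalence is applied to a genuine polynomial of fixed degree).
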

\begin{proof}
	We establish the following for all $v \in \mathbb{E}(\mathcal{T}_h)=\prod_{T \in \mathcal T_h} H^1(T)$: 
	\begin{equation} \label{eq:intermediate_disc_sobolev}
		||v||_{L^4(\Omega)} \lesssim ||\nabla_h v ||_{L^2(\Omega)} + ||v||_{L^2(\Omega)} +||\h^{-1/2}[v]||_{L^2(\Gamma_h^0  \setminus \Sigma)}.
	\end{equation}
	The desired estimate \eqref{eq:disc_sobolev} follows from \eqref{eq:intermediate_disc_sobolev} by applying the latter to each component of $\nabla{\bf v}_h \in [\mathbb{E}(\mathcal{T}_h)]^{3\times 2}$ and invoking the Poincar{\'e}-Friedrichs inequality \eqref{eq:disc_poincare_2} on each subdomain $\Omega_i$, $i=1,2$.
	
	Thus, it remains to prove \eqref{eq:intermediate_disc_sobolev}.
	We write
	$$
	||v||_{L^4(\Omega)} \leq ||v-\Pi_h v||_{L^4(\Omega)} + ||\Pi_h v||_{L^4(\Omega)},
	$$
	where $\Pi_h$ is the operator defined in \eqref{def:Pi_h}. The inverse inequality $||v-\Pi_h v||_{H^1(\Omega)} \lesssim ||\h^{-1}(v-\Pi_h v)||_{L^2(\Omega)}$, together with the embedding $H^1(\Omega) \subset L^4(\Omega)$ yields
	$$
	||v||_{L^4(\Omega)}   \lesssim ||\h^{-1}(v-\Pi_h v)||_{L^2(\Omega)} + ||\Pi_h v||_{H^1(\Omega)}.
	$$
	Whence, the intermediate estimate \eqref{eq:intermediate_disc_sobolev} follows from the stability estimates (\ref{eq:interp_prop_1}) and (\ref{eq:interp_prop_2}).
\end{proof}

\subsection{Discrete Hessian}
Now we are ready to define the discrete, or reconstructed, Hessian which was originally developed for the bending of plates in \cite{Prestrain_BGNY_2022} and \cite{DG_largebending_BNN_2021} using ideas from \cite{DPE2010} and \cite{DPE2011}, see also \cite{LP_2011,P_2014}. The key property of the discrete Hessian is that it mimics $\widetilde{D}^2$, the restriction of the Hessian $D^2$ to $\Omega \setminus \Sigma$.

The definition of the discrete Hessian is based on local lifting operators that extend the jumps on $\Gamma_h^0\setminus\Sigma$ to all of $\Omega$. Let $\ell$ be a non-negative integer and recall that the average of a function is defined in \eqref{eq:avg}. On each interior and non-crease edge $e \in \mathcal{E}_h^0 \setminus \mathcal{E}_h^{\Sigma}$, we define the local lifting $r_e : [L^2(e)]^2 \to [V_h^{\ell}]^{2 \times 2}$ by the relation

\begin{equation}
	\int_{\omega_e} r_e({\bf \phi}) : \boldsymbol{\tau}_h = \int_e \{\tau_h\}{\bf n}_e \cdot {\bf \phi} \ \  \forall \boldsymbol{\tau}_h \in [V_h^{\ell}]^{2 \times 2},
\end{equation}
where $\omega_e$ is union of the two elements sharing $e$ and $V_h^{\ell}$ is the broken finite element space
\begin{equation*}
	V_h^{\ell} := \{ v_h \in L^2(\Omega) \ |  \ v_h|_T = \hat{v} \circ \psi_T^{-1}, \hat{v} \in \mathbb{P}_{\ell}(\hat{T}) \ (\text{resp. } \mathbb{Q}_{\ell}(\hat{T})) \ \forall T \in \mathcal{T}_h  \}.
\end{equation*}
The corresponding global lifting operator $R_h : [L^2(\Gamma_h^0\setminus\Sigma)]^2 \to [V_h^{\ell}]^{2 \times 2}$ is defined  as
\begin{equation}
	R_h := \sum_{e \in \mathcal{E}_h^0  \setminus \mathcal{E}_h^{\Sigma}} r_e.
\end{equation}

The global lifting operator $R_h$ is stable in the sense that for any $v_h \in S_h^k$ and for any $\ell \geq 0$, 
\begin{equation}\label{eq:lift_stability}
	||R_h([\nabla v_h])||_{L^2(\Omega)} \lesssim ||\h^{-1/2}[\nabla v_h]||_{L^2(\Gamma_h^0  \setminus \Sigma)}.
\end{equation}
The proof of \eqref{eq:lift_stability} follows the proof of Lemma 4.34 in \cite{DPE2011} up to minor modifications and is therefore omitted.

The global lifting operator supplements the piecewise Hessian $D^2_h$ to define the discrete Hessian operator $H_h : S_h^k \to [L^2(\Omega)]^{2 \times 2}$:
\begin{equation}
	H_h(v_h) := D_h^2 v_h - R_h([\nabla v_h]). 
\end{equation}
This definition can also be extended to $[S_h^k]^3$ by component-wise application. 
Note that unlike $\widetilde{D}^2 v_h$, which is a distribution, all the components of $H_h(v_h)$ are in $L^2(\Omega)$. This aspect is the key to deducing convergence properties of $H_h$ towards $\widetilde{D}^2$ as $h\to 0^+$, see Lemmas~\ref{lemma:weak_conv_hessian} and~\ref{lemma:strong_conv_hessian} below. 
In addition, it is worth pointing out that each component of $D^2_h v_h|_T$ belongs to $\mathbb P_{k-2}$ when $T$ is a triangle, but to $\mathbb Q_k$ when $T$ is a quadrilateral. In practice we set $\ell=k$, but the results presented below are valid for all choices $\ell \geq 0$. 

As already anticipated, one of the advantages of the discrete Hessian is that, unlike the broken Hessian $D_h^2 {\bf y}_h$, $H_h({\bf y}_h)$ converges weakly to
$\widetilde{D}^2 {\bf y}$ in $[L^2(\Omega)]^{3 \times 2 \times 2}$ if ${\bf y}_h$ converges to ${\bf y}$ in $[L^2(\Omega)]^3$. In addition, the discrete Hessian $H_h({\bf y}_h)$ of the Lagrange interpolant ${\bf y}_h$ of ${\bf y}\in[H^2(\Omega \setminus\Sigma)\cap H^1(\Omega)]^3$ defined separately on each subdomain strongly converges to $\widetilde{D}^2{\bf y}$. Both properties are essential for establishing $\Gamma$-convergence of the discrete energy \eqref{def:Eh} defined below to \eqref{eq:energy_final}. Our aim in the rest of this subsection is to establish these weak and strong convergence properties of the discrete Hessian.
Note that both Lemma \ref{lemma:weak_conv_hessian} and Lemma \ref{lemma:strong_conv_hessian} below can be applied componentwise for ${\bf v}_h \in [S_h^k]^3$.

\begin{lemma}[Weak convergence of $H_h$] \label{lemma:weak_conv_hessian}
	Let $\{ v_h \}_{h >0} \subset S_h^k$ be such that $|v_h|_{H_h^2(\Omega)} \lesssim 1$ and $v_h \to v$ in $L^2(\Omega)$ as $h \to 0^+$ for some $v \in H^2(\Omega \setminus \Sigma)\cap H^1(\Omega)$. Then for any $\ell \geq 0$, we have
	\begin{equation}
		H_h(v_h) \rightharpoonup \widetilde{D}^2 v \quad \text{in} \quad  [L^2(\Omega)]^{2 \times 2} \qquad \textrm{as}\quad h\to 0^+.
	\end{equation}
\end{lemma}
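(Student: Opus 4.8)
The plan is to test the weak convergence against smooth matrix-valued functions and identify the limit via integration by parts. Concretely, fix $\boldsymbol{\phi} \in [C_c^\infty(\Omega \setminus \Sigma)]^{2\times 2}$ (such functions are dense in $[L^2(\Omega)]^{2\times 2}$, since $\Sigma$ has measure zero), and compute $\int_\Omega H_h(v_h):\boldsymbol{\phi}$. Split $H_h(v_h) = D_h^2 v_h - R_h([\nabla v_h])$. For the broken-Hessian term, integrate by parts twice on each element $T \in \mathcal{T}_h$: this produces $\int_\Omega v_h \,(\nabla\cdot\nabla\cdot\boldsymbol{\phi})$ plus edge terms on $\Gamma_h^0$ involving $[\nabla v_h]$ and $[v_h]=0$ (continuity of $v_h$ kills the lower-order jump terms). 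By the stability bound \eqref{eq:lift_stability} and the defining relation of $R_h$, the edge terms coming from $D_h^2 v_h$ are designed to cancel—up to a consistency error—against $\int_\Omega R_h([\nabla v_h]):\boldsymbol{\phi}$ once $\boldsymbol{\phi}$ is replaced by a suitable piecewise-polynomial approximation $\boldsymbol{\phi}_h \in [V_h^\ell]^{2\times 2}$.

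The key steps, in order: (i) replace $\boldsymbol{\phi}$ by $\boldsymbol{\phi}_h$, its elementwise $L^2$-projection (or interpolant) into $[V_h^\ell]^{2\times2}$, controlling the difference by $\|\boldsymbol{\phi}-\boldsymbol{\phi}_h\|_{L^2(\Omega)} \lesssim h\|\boldsymbol{\phi}\|_{H^1}$ together with $\|H_h(v_h)\|_{L^2(\Omega)}\lesssim |v_h|_{H_h^2(\Omega)} \lesssim 1$ (the latter from \eqref{eq:lift_stability}); (ii) on each element integrate $\int_T D^2 v_h : \boldsymbol{\phi}_h$ by parts twice, collecting volume terms $\int_T v_h (\nabla\cdot\nabla\cdot\boldsymbol{\phi}_h)$ and jump terms on $\mathcal{E}_h^0$; (iii) use the definition of $r_e$ with test function $\boldsymbol{\phi}_h$ to see that $\int_\Omega R_h([\nabla v_h]):\boldsymbol{\phi}_h = \sum_{e}\int_e \{\boldsymbol{\phi}_h\}{\bf n}_e\cdot[\nabla v_h]$, which matches exactly the first-order jump terms arising in (ii) on $\Gamma_h^0 \setminus \Sigma$; (iv) observe that the edges in $\mathcal{E}_h^\Sigma$ do not appear because $\boldsymbol{\phi}_h$ vanishes near $\Sigma$ for $h$ small; (v) pass to the limit: $v_h \to v$ in $L^2(\Omega)$ handles the volume term, giving $\int_\Omega v\,(\nabla\cdot\nabla\cdot\boldsymbol{\phi})$, and then integrate by parts twice in reverse on $\Omega_1$ and $\Omega_2$ separately, using $v \in H^2(\Omega_i)$ and $\boldsymbol{\phi} \in C_c^\infty(\Omega\setminus\Sigma)$, to recover $\int_\Omega \widetilde{D}^2 v : \boldsymbol{\phi}$. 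Finally, a density/uniform-boundedness argument upgrades testing against $C_c^\infty(\Omega\setminus\Sigma)$ to all of $[L^2(\Omega)]^{2\times2}$, yielding weak convergence.

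The main obstacle I anticipate is bookkeeping the edge terms: after the double integration by parts there will be both a term with $[\nabla v_h]\cdot\boldsymbol{\phi}_h{\bf n}_e$ (first-order jump, matched by $R_h$) and potentially a term with $[v_h](\nabla\cdot\boldsymbol{\phi}_h)\cdot{\bf n}_e$ — the latter vanishes because $v_h \in C^0(\overline\Omega)$, so $[v_h] = 0$ on every interior edge, but one must be careful that this holds across \emph{all} of $\Gamma_h^0$ including edges adjacent to $\Sigma$. One also needs to confirm that the consistency term $\sum_e \int_e (\{\boldsymbol{\phi}_h\} - \{\boldsymbol{\phi}\}){\bf n}_e\cdot[\nabla v_h]$ and the analogous volume consistency errors vanish as $h\to 0^+$; this follows from a trace/scaling inequality on edges, $\|\h^{1/2}(\boldsymbol{\phi} - \boldsymbol{\phi}_h)\|_{L^2(\Gamma_h^0)} \lesssim h\|\boldsymbol{\phi}\|_{H^1(\Omega)}$, combined with $\|\h^{-1/2}[\nabla v_h]\|_{L^2(\Gamma_h^0\setminus\Sigma)} \lesssim 1$. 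A secondary subtlety is that $\boldsymbol{\phi}_h \in V_h^\ell$ need not vanish near $\Sigma$ if $\ell$-projection spreads support; using instead a quasi-interpolant that preserves compact support away from $\Sigma$, or simply noting that for fixed $\boldsymbol{\phi}$ with $\mathrm{supp}\,\boldsymbol{\phi} \Subset \Omega\setminus\Sigma$ the elements meeting $\Sigma$ are eventually disjoint from $\mathrm{supp}\,\boldsymbol{\phi}_h$ for $h$ small, resolves this. None of these steps is deep; the argument is essentially the one in \cite{Prestrain_BGNY_2022,DG_largebending_BNN_2021} adapted to the continuous space $S_h^k$ and to the two-subdomain splitting induced by $\Sigma$.
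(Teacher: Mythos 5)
Your proposal is correct and follows essentially the same route as the paper: test against smooth functions compactly supported away from $\Sigma$ (equivalently, in $C_0^\infty(\Omega_i)$ for $i=1,2$), integrate by parts twice elementwise, use the defining relation of the lifting with a discrete approximation of $\boldsymbol{\phi}$ to cancel the $[\nabla v_h]$ edge terms up to consistency errors, bound those errors via the scaled trace inequality, interpolation estimates and $\|\h^{-1/2}[\nabla v_h]\|_{L^2(\Gamma_h^0\setminus\Sigma)}\lesssim 1$, and pass to the limit using $v_h\to v$ in $L^2(\Omega)$ plus density and the uniform $L^2$ bound on $H_h(v_h)$. The only cosmetic difference is that the paper keeps the smooth $\boldsymbol{\phi}$ in the double integration by parts and inserts its Lagrange interpolant only inside the lifting term (so no jumps of the discrete test function ever appear), whereas your step (ii) integrates against $\boldsymbol{\phi}_h$ and therefore generates a few extra cross terms you would have to bound by the same interpolation/trace estimates.
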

\begin{proof}
	We prove the convergence on each subdomain $\Omega_i$, $i=1,2$. It is enough to consider functions in $[C_0^{\infty}(\Omega_i)]^{2\times 2}$ since this space is dense in $[L^2(\Omega_i)]^{2\times 2}$. Let $\boldsymbol{\phi} \in [C_0^{\infty}(\Omega_i)]^{2\times 2}$.  Integrating by parts twice gives
	\begin{align*}
		\int_{\Omega_i} &H_h(v_h) : \boldsymbol{\phi} = \int_{\Omega_i} D_h^2 v_h : \phi - \int_{\Omega_i} R_h([\nabla v_h]):\boldsymbol{\phi} \\
		&= \int_{\Omega_i} v_h (\text{div } \text{div} \boldsymbol{\phi})- \int_{\Omega_{i}} R_h([\nabla v_h]):(\boldsymbol{\phi}-\boldsymbol{\phi}_h) + \sum_{e \in \mathcal{E}_h^0  \cap \Omega_i} \int_e [\nabla v_h] \cdot \{\boldsymbol{\phi} - \boldsymbol{\phi}_h \} {\bf n}_e \\
		&=: I_1 + I_2 + I_3,
	\end{align*}
	where $\boldsymbol{\phi}_h=\mathcal{I}_h^{k} \boldsymbol{\phi} \in [S_h^{k}]^{2\times 2}$ denotes the Lagrange interpolant of $\boldsymbol{\phi}$. We now deal with $I_1$, $I_2$, and $I_3$ in turn.
	
	For $I_1$, it suffices to invoke the strong convergence $v_h \to v$ in $L^2(\Omega)$ to deduce that
	$$\int_{\Omega_i} v_h (\text{div } \text{div} \boldsymbol{\phi}) \to \int_{\Omega_i} v (\text{div } \text{div} \boldsymbol{\phi}) = \int_{\Omega_i} D^2 v : \boldsymbol{\phi} \qquad \textrm{as}\quad h \to 0^+.$$
	
	For $I_2$, the boundedness assumption $|v_h|_{H_h^2(\Omega)} \lesssim 1$ and the stability of the global lifting operator \eqref{eq:lift_stability} imply that 
	$$ |I_2|  
	\lesssim ||\h^{-1/2}[\nabla v_h]||_{L^2( \Gamma_h^0 \cap \Omega_i)}||\boldsymbol{\phi}- \boldsymbol{\phi}_h||_{L^2(\Omega_i)}  \\
	\lesssim ||\boldsymbol{\phi} - \boldsymbol{\phi}_h||_{L^2(\Omega_i)} \to 0$$
	as $h \to 0^+$.
	
	For $I_3$, the scaled trace inequality yields, for $e\in\mathcal{E}_h^0 \cap \Omega_i$,
	\begin{equation}\label{eq:scaled_trace}
		||\boldsymbol{\phi} - \boldsymbol{\phi}_h||_{L^2(e)} \lesssim ||\h^{-1/2}(\boldsymbol{\phi} - \boldsymbol{\phi}_h)||_{L^2(\omega_e)} + ||\h^{1/2}\nabla (\boldsymbol{\phi} - \boldsymbol{\phi}_h) ||_{L^2(\omega_e)}.
	\end{equation}
	Whence, the shape-regularity of $\{\mathcal{T}_h\}_{h>0}$ and the boundedness assumption $|v_h|_{H_h^2(\Omega)} \lesssim 1$ imply 
	\begin{equation*}
		|I_3| \lesssim \Big( \sum_{e \in \mathcal{E}_h^0 {\cap \Omega_i}} ||\h^{-1/2}[\nabla v_h]||_{L^2(e)}^2 \Big)^{1/2} \Big(||\boldsymbol{\phi} - \boldsymbol{\phi}_h||_{L^2(\Omega_i)} + ||\h \nabla (\boldsymbol{\phi} - \boldsymbol{\phi}_h)||_{L^2(\Omega_i)} \Big) \to 0
	\end{equation*}
	as $h \to 0^+$.
	
	The combined results for $I_1$, $I_2$, and $I_3$ give the desired weak convergence.
\end{proof}

\begin{lemma}[Strong convergence of $H_h$]\label{lemma:strong_conv_hessian}
	Let $v \in H^2(\Omega \setminus \Sigma) \cap H^1(\Omega)$ and $v_h = \mathcal{I}_h^k v$ be the Lagrange interpolant of $v$ defined separately on each subdomain. Then for any $\ell \geq 0$, we have
	\begin{equation}
		D_h^2 v_h \to \widetilde{D}^2 v \quad \text{and} \quad R_h([\nabla v_h]) \to 0 \quad \text{in}\quad  [L^2(\Omega)]^{2 \times 2} \qquad \textrm{as}\quad h \to 0^+.
	\end{equation}
	In particular, it holds that
	\begin{equation}
		H_h(v_h) \to \widetilde{D}^2v \quad \text{in} \quad  [L^2(\Omega)]^{2 \times 2} \qquad \text{as}\quad  h \to 0^+.
	\end{equation}
\end{lemma}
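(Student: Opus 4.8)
The plan is to prove the two convergence statements $D_h^2 v_h \to \widetilde{D}^2 v$ and $R_h([\nabla v_h]) \to 0$ in $[L^2(\Omega)]^{2\times 2}$ separately, and then combine them trivially to obtain $H_h(v_h) \to \widetilde{D}^2 v$ since $H_h(v_h) = D_h^2 v_h - R_h([\nabla v_h])$. Throughout, I work on each subdomain $\Omega_i$, $i=1,2$, since $v \in H^2(\Omega\setminus\Sigma)$ means $v|_{\Omega_i} \in H^2(\Omega_i)$, and the Lagrange interpolant $v_h = \mathcal{I}_h^k v$ is defined separately on each $\Omega_i$; adding the two contributions yields the global statement. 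Because the crease $\mathcal{E}_h^{\Sigma}$ is excluded from the skeleton $\Gamma_h^0\setminus\Sigma$ appearing in the lifting operator, no jump terms across $\Sigma$ ever enter, so the argument on $\Omega_i$ is exactly the classical one.

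For the first convergence, I would rely on the standard isoparametric interpolation estimate: on each element $T$, writing $e_T := v - \mathcal{I}_h^k v$, one has $\|D^2 e_T\|_{L^2(T)} \lesssim h_T^{k-1}\,|v|_{H^{k+1}(T)}$ when $v$ is smooth enough, and more generally, by density of smooth functions in $H^2(\Omega_i)$ together with the $H^2$-stability of $\mathcal{I}_h^k$ on shape-regular isoparametric meshes (using \eqref{eq:iso_estI}--\eqref{eq:iso_estII}), a routine approximation argument gives $\|D^2_h(v - v_h)\|_{L^2(\Omega_i)} \to 0$ as $h\to 0^+$. Here $D^2_h v = \widetilde{D}^2 v$ a.e.\ since $v|_{\Omega_i}\in H^2(\Omega_i)$, so this is precisely $D_h^2 v_h \to \widetilde{D}^2 v$ in $[L^2(\Omega_i)]^{2\times 2}$.

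For the second convergence, I would use the stability bound \eqref{eq:lift_stability} for $R_h$, which reduces the task to showing $\|\h^{-1/2}[\nabla v_h]\|_{L^2(\Gamma_h^0\setminus\Sigma)} \to 0$. The key point is that $\nabla v = \widetilde{\nabla} v$ has \emph{no} jump across interior edges of $\mathcal{T}_h^i$ (it lies in $[H^1(\Omega_i)]^2$), so $[\nabla v_h] = [\nabla v_h - \nabla v] = [\nabla(v_h - v)]$ on each such edge. Then for each edge $e$ with patch $\omega_e$, the scaled trace inequality (as in \eqref{eq:scaled_trace}) gives
\begin{equation*}
	\|\h^{-1/2}[\nabla(v_h-v)]\|_{L^2(e)}^2 \lesssim \|\h^{-1}\nabla(v_h-v)\|_{L^2(\omega_e)}^2 + \|D^2_h(v_h-v)\|_{L^2(\omega_e)}^2 + \|\h^{-1}\nabla(v-v_h)\|_{L^2(\omega_e)}^2,
\end{equation*}
and summing over edges, using bounded overlap of the patches, yields $\|\h^{-1/2}[\nabla v_h]\|_{L^2(\Gamma_h^0\cap\Omega_i)} \lesssim \|\h^{-1}\nabla(v-v_h)\|_{L^2(\Omega_i)} + \|D^2_h(v-v_h)\|_{L^2(\Omega_i)}$. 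Both terms on the right vanish as $h\to 0^+$ by the interpolation estimates $\|\nabla(v-v_h)\|_{L^2(T)}\lesssim h_T\,\|D^2 e_T\|_{L^2(T)}$ (again via a density argument from smooth functions) and the $H^2$-convergence established above. Hence $\|\h^{-1/2}[\nabla v_h]\|_{L^2(\Gamma_h^0\setminus\Sigma)}\to 0$, so $R_h([\nabla v_h])\to 0$, and the final claim follows.

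The main obstacle is the interpolation analysis in the isoparametric setting: the estimates $\|D^2(v-\mathcal{I}_h^k v)\|_{L^2(T)}\lesssim h_T^{k-1}|v|_{H^{k+1}(T)}$ and the $H^2$-stability of $\mathcal{I}_h^k$ on curved elements require the bounds \eqref{eq:iso_estI}--\eqref{eq:iso_estII} on the isoparametric maps and a careful chain-rule argument after pulling back to $\hat T$, together with a density/truncation argument to pass from $v\in H^{k+1}$ to merely $v\in H^2$. Once these standard but slightly technical ingredients are in place, everything else is a direct consequence of \eqref{eq:lift_stability} and the scaled trace inequality.
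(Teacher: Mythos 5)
Your proposal is correct and follows essentially the same route as the paper's proof: density of smooth functions combined with the $H^2$-stability of $\mathcal{I}_h^k$ and isoparametric interpolation estimates for the broken Hessian on each $\Omega_i$, then the lifting stability \eqref{eq:lift_stability}, the fact that $[\nabla v]=0$ on non-crease edges, and the scaled trace inequality to reduce the jump term to $\|D_h^2(v-v_h)\|_{L^2(\Omega_i)}$. The only step worth making explicit is your elementwise bound $\|\nabla(v-v_h)\|_{L^2(T)}\lesssim h_T\|D^2(v-v_h)\|_{L^2(T)}$, which the paper justifies via the projection property $\mathcal{I}_h^k(v-v_h)|_T=0$ together with \eqref{eq:interp2} ($m=1$, $k'=1$), rather than by density.
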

\begin{proof}
	We begin by noting that $[\mathcal I_h^k v] = [v] = 0$ along $\Sigma$ because $v\in H^1(\Omega)$. This implies that $v_h=\mathcal I_h^k v$ is globally continuous and thus in $S_h^k$. The rest of the proof consists in two steps. First we prove the convergence of the broken Hessian, then we prove the convergence of the lifting terms. For this we need the following two estimates for curved elements $T\in\mathcal{T}_h$:
	\begin{equation}\label{eq:interp1}
		||D^2 \mathcal{I}_h^{k} w ||_{L^2(T)} \lesssim |w|_{H^2(T)} \text{\ } \forall w \in H^2(T)
	\end{equation}
	and for $1\le k'\le k$ and $0 \leq m \leq k'+1$,
	\begin{equation}\label{eq:interp2}
		||w - \mathcal{I}_h^k w||_{H^m(T)} \lesssim h_T^{k'+1-m}|w|_{H^{k'+1}(T)} \text{\ } \forall w \in H^{k'+1}(T). 
	\end{equation}
	The proofs of these estimates are standard for triangles and somewhat complicated for quadrilaterals. The proof for quadrilaterals can be found in \cite{DG_largebending_BNN_2021} for the case $k'=k$, but can be extended for $1 \leq k' \leq k$.  
	
	For the convergence of the broken Hessian, as in the proof of Lemma~\ref{lemma:weak_conv_hessian} we argue on each subdomain $\Omega_i$, $i=1,2$.  Let $v_i^{\epsilon} \in C^{\infty}(\Omega_i)$ be a smooth approximation of $v|_{\Omega_i} \in H^2(\Omega_i)$ such that $v^{\epsilon}_i \to v|_{\Omega_i}$ in $H^2(\Omega_i)$ as $\epsilon\to 0^+$. Also let $v_{i, h}^{\epsilon} := \mathcal{I}_h^k v_i^{\epsilon}$  be the Lagrange interpolant onto $S_h^k|_{\Omega_i}$. Notice that the traces of $v_{i,h}^\epsilon$ for $i=1,2$ do not necessarily match on the crease $\Sigma$, but this does not affect the local  argument provided below. Then for each $T \in \mathcal{T}_h^{i}$,
	\begin{align*}
		||D^2 v_h - D^2 v||_{L^2(T)} &\leq ||D^2 v_h - D^2 v_{i, h}^{\epsilon}||_{L^2(T)} + ||D^2 v_{i,h}^{\epsilon} - D^2v_i^{\epsilon}||_{L^2(T)} + ||D^2 v_i^{\epsilon} - D^2 v ||_{L^2(T)} \\
		&\lesssim |v-v_i^{\epsilon}|_{H^2(T)} + h|v_i^{\epsilon}|_{H^3(T)}
	\end{align*}
	from (\ref{eq:interp1}) and (\ref{eq:interp2}) with $m=2$ and $k'=2$. Summing over all $T \in \mathcal{T}_h^i$ gives
	$$||D_h^2 v_h - D^2 v||_{L^2(\Omega_i)} \leq C\Big( |v-v_i^{\epsilon}|_{H^2(\Omega_i)} + h|v_i^{\epsilon}|_{H^3(\Omega_i)} \Big),$$
	where $C$ is a constant independent of $\varepsilon$ and $h$.
	Now for any $\eta > 0$, we choose $\epsilon$ small enough so that $C|v-v_i^{\epsilon}|_{H^2(\Omega_i)} \leq \eta/2$, and then $h$ small enough so that $Ch|v^{\epsilon}|_{H^3(\Omega_i)} \leq \eta/2$. With these choices we have
	$$||D_h^2v_h - D^2 v||_{L^2(\Omega_i)} \leq \eta.$$
	Adding the results for $\Omega_1$ and $\Omega_2$ proves the strong convergence of $D_h^2 v_h$ to $\widetilde{D}^2 v$ in $[L^2(\Omega)]^{2 \times 2}$.
	
	Now we show that $R_h([\nabla v_h]) \to 0$ in $[L^2(\Omega)]^{2 \times 2}$. We use the stability estimate \eqref{eq:lift_stability} for $R_h$, together with the fact that $[\nabla v]|_e = 0$ for all $e \in \mathcal{E}_h^0\setminus \mathcal{E}_h^{\Sigma}$ to write
	$$||R_h([\nabla v_h])||_{L^2(\Omega)} \lesssim ||\h^{-1/2}[\nabla(v_h-v)]||_{L^2(\Gamma_h^0 \setminus \Sigma)}.$$
	Whence, the scaled trace estimate (\ref{eq:scaled_trace}), the (local) projection property $\mathcal{I}_h^kv_h|_T= v_h|_T$ and interpolation estimate (\ref{eq:interp2}) with $m=1$ and $k'=1$ yield for any $e \in \mathcal{E}_h^0\setminus \mathcal{E}_h^{\Sigma}$
	\begin{align*}
		||\h^{-1/2}[\nabla(v_h-v)]||_{L^2(e)}^2 &\lesssim \sum_{T \in \omega_e}\left[h_T^{-2}||\nabla(v_h-v)||_{L^2(T)}^2 + ||D_h^2(v_h-v)||_{L^2(T)}^2\right] \\
		&=\sum_{T \in \omega_e}\left[h_T^{-2}||\nabla(v_h - v - \mathcal{I}_h^k (v_h -v))||_{L^2(T)}^2 + ||D_h^2(v_h-v)||_{L^2(T)}^2\right] \\
		&\lesssim \sum_{T \in \omega_e} \|D^2v_h - D^2v\|_{L^2(T)}^2,
	\end{align*}
	where the hidden constant in the first inequality follows from the shape-regularity of the mesh ($\frac{h_T}{h_e}\le \frac{h_T}{\rho_T}\lesssim 1$).
	
	Summing over all the edges in $e \in \mathcal{E}_h^0\setminus \mathcal{E}_h^{\Sigma}$ and invoking the shape-regularity of $\{ \mathcal{T}_h \}_{h>0}$ gives the desired result:
	$$||R_h ([\nabla v_h])||_{L^2(\Omega)} \lesssim \Big( \sum_{T \in \mathcal{T}_h} \|D^2v_h - D^2v\|_{L^2(T)}^2 \Big)^{1/2} = \|D_h^2v_h-\widetilde{D}^2v\|_{L^2(\Omega)} \to 0 \qquad \textrm{as} \quad h \to 0^+.$$ 
	
	The convergences of $D_h^2 v_h$ and $R_h([\nabla v_h])$ directly imply the strong convergence of the discrete Hessian $H_h(v_h) = D^2_hv_h - R_h([\nabla v_h ])$.
\end{proof}

\subsection{Discrete Energy}
Using the finite element space and the discrete Hessian from above, we introduce the discrete energy
\begin{equation} \label{def:Eh}
	E_h({\bf y}_h) := E_h^S({\bf y}_h) + \theta^2 E_h^B({\bf y}_h),
\end{equation}
where
\begin{equation}
	E_h^S({\bf y}_h) := \frac{1}{8}\int_{\Omega} \Big( 2\mu |{\bf g}^{-1/2}( \nabla {\bf y}_h^T \nabla {\bf y}_h-{\bf g}) {\bf g}^{-1/2}|^2 + \lambda \text{tr}({\bf g}^{-1/2}(\nabla {\bf y}_h^T \nabla {\bf y}_h - {\bf g}) {\bf g}^{-1/2})^2 \Big) 
\end{equation}
and
\begin{equation} 
	\begin{split}
		E_h^B({\bf y}_h) :=& \frac{1}{24} \int_{\Omega} \Big( 2\mu |{\bf g}^{-1/2}H_h({\bf y}_h){\bf g}^{-1/2}|^2 + \frac{2\mu \lambda}{2\mu + \lambda} \text{tr}({\bf g}^{-1/2}H_h({\bf y}_h){\bf g}^{-1/2})^2 \Big) \\
		&+ \frac{\gamma}{2}||\h^{-1/2}[\nabla {\bf y}_h] ||_{L^2(\Gamma^0_h  \setminus \Sigma)}^2,
	\end{split}
\end{equation}
with $\gamma > 0.$

Compared to the exact energy \eqref{eq:energy_final} with the modification \eqref{eqn:Ebend}, the discrete energies above are defined for continuous piecewise polynomial deformations ${\bf y}_h$. In particular, the Hessian term in the bending energy \eqref{eqn:Ebend} is replaced by the discrete Hessian introduced above. 
This modification alone would not guarantee the uniform coercivity of the energy on $[S_h^k]^3$ with respect to the discrete seminorm $|\cdot|_{H^2_h(\Omega)}$. The purpose of the stabilization term with stabilization parameter $\gamma>0$ in the discrete bending energy $E_h^B$ is to restore the desired coercivity property. To see this, recall the following equivalence property provided by Lemma 2.6 of \cite{Prestrain_theoretical_BGNY}, together with the fact that $[v_h]=0$ for $v_h\in S_h^k$: 
\begin{equation}\label{eq:H2_equiv}
	C(\gamma)|v_h|_{H_h^2(\Omega)}^2 \leq ||H_h(v_h)||_{L^2(\Omega)}^2 + \frac{\gamma}{2}||\h^{-1/2}[\nabla v_h]||_{L^2(\Gamma_h^0 \setminus \Sigma)}^2 \lesssim |v_h|_{H_h^2(\Omega)}^2,
\end{equation}
where $C(\gamma)$ is a positive constant only depending on $\gamma$ and such that $C(\gamma) \to 0^+$ as $\gamma \to 0^+$. The coercivity of $E_h$ in $|.|_{H^2_h(\Omega)}$ is established below.

\begin{thm}[Partial coercivity of $E_h$] \label{thm:coercivity_prestrain}
	For all ${\bf y}_h \in [S_h^k]^3$, there holds
	\begin{equation} \label{eqn:bound_grad_ES}
		||\nabla {\bf y}_h||_{L^2(\Omega)}^2 \lesssim  | \Omega|^{1/2} \| {\bf g} \|_{L^\infty(\Omega)} E_h^S({\bf y}_h)^{1/2} + ||{\bf g}||_{L^1(\Omega)}.
	\end{equation}
	Moreover, for $\gamma > 0$, we have
	\begin{equation} \label{eqn:coercivity}
		|{\bf y}_h|_{H^2_h (\Omega)}^{2} \lesssim ||{\bf g}||_{L^{\infty}(\Omega)} E_h^B({\bf y}_h),
	\end{equation}
	where the hidden constant tends to $\infty$ as $\gamma \to 0^+$.
	In particular, we have for all ${\bf y}_h \in [S_h^k]^3$,
	\begin{equation}\label{eq:partial_coercivity}
		||\nabla {\bf y}_h||_{L^2(\Omega)}^{4} +  \theta^2 |{\bf y}_h|_{H^2_h (\Omega)}^{2} \lesssim E_h({\bf y}_h) +C,
	\end{equation}
	where $C$ is a constant depending only on $g$ and $\Omega$.
\end{thm}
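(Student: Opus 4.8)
The plan is to establish the three estimates in order, since \eqref{eq:partial_coercivity} follows by combining \eqref{eqn:bound_grad_ES} and \eqref{eqn:coercivity} with Young's inequality.

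\textbf{Step 1: The stretching bound \eqref{eqn:bound_grad_ES}.} First I would observe that since ${\bf g}$ is uniformly positive definite and bounded, the tensor ${\bf g}^{-1/2}$ is bounded above and below, so the quadratic form defining $E_h^S$ is comparable to $\int_\Omega |\nabla{\bf y}_h^T\nabla{\bf y}_h - {\bf g}|^2$; more precisely, dropping the $\lambda$-term (which is nonnegative) and keeping only the $\mu$-term, I get $\|{\bf g}^{-1/2}(\nabla{\bf y}_h^T\nabla{\bf y}_h-{\bf g}){\bf g}^{-1/2}\|_{L^2(\Omega)}^2 \lesssim E_h^S({\bf y}_h)$, hence $\|\nabla{\bf y}_h^T\nabla{\bf y}_h - {\bf g}\|_{L^2(\Omega)} \lesssim \|{\bf g}\|_{L^\infty(\Omega)} E_h^S({\bf y}_h)^{1/2}$. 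Then I write $\|\nabla{\bf y}_h\|_{L^2(\Omega)}^2 = \int_\Omega \mathrm{tr}(\nabla{\bf y}_h^T\nabla{\bf y}_h) \le \int_\Omega \mathrm{tr}({\bf g}) + \int_\Omega |\nabla{\bf y}_h^T\nabla{\bf y}_h - {\bf g}|$, bound the first term by $\|{\bf g}\|_{L^1(\Omega)}$ and the second by $|\Omega|^{1/2}\|\nabla{\bf y}_h^T\nabla{\bf y}_h-{\bf g}\|_{L^2(\Omega)}$ via Cauchy--Schwarz, and combine. (One should track the constants carefully to match the precise form stated, but this is routine.)

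\textbf{Step 2: The bending bound \eqref{eqn:coercivity}.} Here I would again use that ${\bf g}^{-1/2}$ is bounded below (in the sense that ${\bf A} \mapsto {\bf g}^{-1/2}{\bf A}{\bf g}^{-1/2}$ is an isomorphism with norm controlled by $\|{\bf g}\|_{L^\infty(\Omega)}$ and $\|{\bf g}^{-1}\|_{L^\infty(\Omega)}$, the latter being a fixed constant), so that $\|H_h({\bf y}_h)\|_{L^2(\Omega)}^2 \lesssim \|{\bf g}\|_{L^\infty(\Omega)}\int_\Omega |{\bf g}^{-1/2}H_h({\bf y}_h){\bf g}^{-1/2}|^2 \lesssim \|{\bf g}\|_{L^\infty(\Omega)} E_h^B({\bf y}_h)$ (again discarding the nonnegative trace term). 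Adding $\tfrac{\gamma}{2}\|\h^{-1/2}[\nabla{\bf y}_h]\|_{L^2(\Gamma_h^0\setminus\Sigma)}^2 \le E_h^B({\bf y}_h)$ and invoking the lower bound in the norm-equivalence \eqref{eq:H2_equiv}, namely $C(\gamma)|{\bf y}_h|_{H_h^2(\Omega)}^2 \le \|H_h({\bf y}_h)\|_{L^2(\Omega)}^2 + \tfrac{\gamma}{2}\|\h^{-1/2}[\nabla{\bf y}_h]\|_{L^2(\Gamma_h^0\setminus\Sigma)}^2$, yields \eqref{eqn:coercivity} with hidden constant proportional to $C(\gamma)^{-1}$, which blows up as $\gamma\to 0^+$ as claimed. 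This is applied componentwise over $m=1,2,3$.

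\textbf{Step 3: Assembling \eqref{eq:partial_coercivity}.} From Step 1, $\|\nabla{\bf y}_h\|_{L^2(\Omega)}^2 \lesssim \|{\bf g}\|_{L^\infty}|\Omega|^{1/2} E_h^S({\bf y}_h)^{1/2} + \|{\bf g}\|_{L^1}$; squaring and using $(a+b)^2 \le 2a^2+2b^2$ gives $\|\nabla{\bf y}_h\|_{L^2(\Omega)}^4 \lesssim E_h^S({\bf y}_h) + C$ with $C$ depending only on ${\bf g}$ and $\Omega$. From Step 2, $\theta^2|{\bf y}_h|_{H_h^2(\Omega)}^2 \lesssim \theta^2 E_h^B({\bf y}_h)$. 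Since $E_h({\bf y}_h) = E_h^S({\bf y}_h) + \theta^2 E_h^B({\bf y}_h)$ and both $E_h^S, E_h^B \ge 0$, adding the two gives $\|\nabla{\bf y}_h\|_{L^2(\Omega)}^4 + \theta^2|{\bf y}_h|_{H_h^2(\Omega)}^2 \lesssim E_h({\bf y}_h) + C$. The only subtlety is that the hidden constant depends on $\gamma$, which is acceptable since $\gamma$ is fixed. I expect no real obstacle here; the main point requiring care is simply bookkeeping the dependence of constants on ${\bf g}$ (only $\|{\bf g}\|_{L^\infty}$, $\|{\bf g}\|_{L^1}$, and the fixed lower ellipticity bound of ${\bf g}$ enter) and confirming that discarding the $\lambda$- and trace-terms is legitimate because $\mu>0$ and $\lambda\ge 0$ make those terms nonnegative — so the "partial" nature of the coercivity (no control of $\|{\bf y}_h\|_{L^2}$ itself, only of $\nabla{\bf y}_h$ and the Hessian seminorm) is exactly what one should expect given translation invariance of the energy.
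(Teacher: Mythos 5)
Your proposal is correct and follows essentially the same route as the paper: drop the nonnegative $\lambda$/trace contributions, use the pointwise bound $|{\bf A}|\lesssim\|{\bf g}\|_{L^\infty(\Omega)}\,|{\bf g}^{-1/2}{\bf A}{\bf g}^{-1/2}|$ together with Cauchy--Schwarz for the stretching part, invoke the equivalence \eqref{eq:H2_equiv} for the bending part, and then combine. The only (cosmetic) difference is in the first step, where you use the exact identity $|\nabla{\bf y}_h|^2=\mathrm{tr}(\nabla{\bf y}_h^T\nabla{\bf y}_h)$ and $\mathrm{tr}({\bf A})\le\sqrt{2}\,|{\bf A}|$, whereas the paper reaches the same intermediate bound $\|\nabla{\bf y}_h\|_{L^2(\Omega)}^2\lesssim\int_\Omega|\nabla{\bf y}_h^T\nabla{\bf y}_h-{\bf g}|+\|{\bf g}\|_{L^1(\Omega)}$ by an elementwise squaring argument.
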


\begin{proof}
	We start with the estimate on the gradient.
	For any $T \in \mathcal{T}_h$, we compute
	\begin{equation*}
		\frac{1}{2}\Big( \int_T |\nabla {\bf y}_h|^2 \Big)^2 \leq \sum_{i=1}^2 \Big( \int_T |\partial_i {\bf y}_h|^2 \Big)^2 \leq \sum_{i,j=1}^2 \Big(\int_T \partial_i {\bf y}_h \cdot\partial_j {\bf y}_h \Big)^2 =\left|\int_T \nabla {\bf y}_h^T \nabla {\bf y}_h \right|^2,
	\end{equation*}
	which implies that
	\begin{equation*}
		2^{-1/2} ||\nabla {\bf y}_h||_{L^2(\Omega)}^2 \leq \sum_{T \in \mathcal{T}_h} \int_T |\nabla {\bf y}_h^T \nabla {\bf y}_h | \leq \sum_{T \in \mathcal{T}_h} \int_T |\nabla {\bf y}_h^T \nabla {\bf y}_h - {\bf g}| + ||{\bf g}||_{L^1(\Omega)}.
	\end{equation*}
	Because $g$ is positive definite we have $| \cdot | \leq | {\bf g} | |{\bf g}^{-\frac{1}{2}}\cdot {\bf g}^{-\frac{1}{2}}|$.
	This together with a Cauchy-Schwarz inequality yields the desired estimate \eqref{eqn:bound_grad_ES}.
	
	To derive \eqref{eqn:coercivity}, we use again that $g$ is symmetric and positive definite to write
	$$ ||H_h({\bf y}_h)||_{L^2(\Omega)}^2 \lesssim ||{\bf g}||_{L^{\infty}(\Omega)} \int_{\Omega}|{\bf g}^{-1/2} H_h({\bf y}_h) {\bf g}^{-1/2}|^2$$
	and thus, thanks to the equivalence relation  \eqref{eq:H2_equiv},
	$$ |{\bf y}_h|^{2}_{H^2_h(\Omega)} \lesssim ||H_h({\bf y}_h)||_{L^2(\Omega)}^2 + \frac{\gamma}{2}||\h^{-1/2}[\nabla {\bf y}_h]||_{L^2(\Gamma_h^0  \setminus \Sigma)}^{2} \lesssim E_h^B({\bf y}_h)$$
	as desired.
	
	Estimate \eqref{eq:partial_coercivity} follows upon combining \eqref{eqn:bound_grad_ES} and \eqref{eqn:coercivity}.
\end{proof}

\section{$\Gamma$-Convergence of the Discrete Energy} \label{sec:gamma_conv}

The goal of this section is to establish the convergence of discrete minimizers of $E_h$ to continuous minimizers of $E$. This is done by essentially proving the $\Gamma$-convergence of $E_h$ to $E$.  The latter framework consists in establishing a lim-inf and lim-sup property, which are the focus of this section. These properties were proven in \cite{Prestrain_theoretical_BGNY} for the bending energy with a metric constraint, and we will follow similar reasoning in the preasymptotic case. First, however, we state the convergence of minimizers.

\begin{thm} \label{thm:Gamma_conv}
	Let $\{{\bf y}_h \}_{h>0} \subset [S_h^k]^3$ be a sequence of functions such that $E_h({\bf y}_h) \le \Lambda$  for $\Lambda$ independent of $h$. Assume furthermore that ${\bf y}_h$ is an almost global minimizer of $E_h$, meaning that 
	$$
	E_h({\bf y}_h) \leq \inf_{{\bf v}_h \in [S_h^k]^3} E_h({\bf v}_h) + \epsilon,$$ 
	where $\epsilon \to 0^+$ as $h \to 0^+$. 
	Then $\{ \bar{\bf y}_h \}_{h >0}$ with $\bar{\bf y}_h := {\bf y}_h - \strokedint_{\Omega} {\bf y}_h$ is precompact in $[L^2(\Omega)]^3$, and every cluster point $\bar{\bf y}$ of $\bar{\bf y}_h$ belongs to $[H^2(\Omega \setminus \Sigma)\cap H^1(\Omega)]^3$ and is a global minimizer of $E$, i.e.,
	$$
	E(\bar{\bf y}) = \inf_{{\bf v}\in [H^2(\Omega \setminus \Sigma)\cap H^1(\Omega)]^3} E({\bf v}).
	$$
	Moreover, up to a subsequence, there holds
	$$\lim_{h \to 0^+} E_h({\bf y}_h) = E(\bar{\bf y}).$$
\end{thm}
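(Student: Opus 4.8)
The plan is to deduce Theorem~\ref{thm:Gamma_conv} from the partial coercivity of Theorem~\ref{thm:coercivity_prestrain}, the Compactness Lemma~\ref{lemma:compactness}, and the lim-inf and lim-sup properties (Theorems~\ref{thm:liminf} and~\ref{thm:limsup}) by the classical argument that $\Gamma$-convergence plus equi-coercivity yields convergence of (almost) minimizers. The first observation I would use repeatedly is that $E_h$, $E_h^S$, $E_h^B$ and $E$ all depend on their argument only through $\nabla{\bf y}_h$ (equivalently $\nabla{\bf y}_h^T\nabla{\bf y}_h$ and $[\nabla{\bf y}_h]$) and through $H_h({\bf y}_h)=D_h^2{\bf y}_h-R_h([\nabla{\bf y}_h])$; all of these are invariant under subtracting the mean, so $E_h(\bar{\bf y}_h)=E_h({\bf y}_h)\le\Lambda$. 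Feeding this into the coercivity estimate \eqref{eq:partial_coercivity} (with $\theta>0$ fixed) gives the uniform bound $\|\nabla\bar{\bf y}_h\|_{L^2(\Omega)}+|\bar{\bf y}_h|_{H_h^2(\Omega)}\lesssim 1$. Applying Lemma~\ref{lemma:compactness} componentwise then produces, along a subsequence, a limit $\bar{\bf y}\in[H^2(\Omega\setminus\Sigma)\cap H^1(\Omega)]^3$ of mean zero with $\bar{\bf y}_h\to\bar{\bf y}$ in $[L^2(\Omega)]^3$ and $\nabla\bar{\bf y}_h\to\nabla\bar{\bf y}$ in $[L^2(\Omega)]^{3\times 2}$; this gives the claimed precompactness, and since any cluster point is an $L^2$-limit along some subsequence, re-applying the lemma along that subsequence shows every cluster point lies in $[H^2(\Omega\setminus\Sigma)\cap H^1(\Omega)]^3$. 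From here on I fix a cluster point $\bar{\bf y}$ and the associated subsequence, still denoted by $h$.

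Next I would run the standard sandwich argument to identify $\bar{\bf y}$ as a global minimizer. Let ${\bf v}\in[H^2(\Omega\setminus\Sigma)\cap H^1(\Omega)]^3$ be arbitrary; by translation invariance of $E$ we may take $\strokedint_{\Omega}{\bf v}=0$. The lim-sup property (Theorem~\ref{thm:limsup}) yields a recovery sequence ${\bf v}_h\in[S_h^k]^3$ with ${\bf v}_h\to{\bf v}$ in $[L^2(\Omega)]^3$ and $\limsup_{h\to0^+}E_h({\bf v}_h)\le E({\bf v})$. Combining almost-minimality, $E_h({\bf y}_h)\le E_h({\bf v}_h)+\epsilon$ with $\epsilon\to0^+$, with $E_h({\bf y}_h)=E_h(\bar{\bf y}_h)$ and the lim-inf property (Theorem~\ref{thm:liminf}) applied to $\bar{\bf y}_h\to\bar{\bf y}$, I obtain
\[
E(\bar{\bf y})\ \le\ \liminf_{h\to0^+}E_h(\bar{\bf y}_h)\ =\ \liminf_{h\to0^+}E_h({\bf y}_h)\ \le\ \limsup_{h\to0^+}\bigl(E_h({\bf v}_h)+\epsilon\bigr)\ \le\ E({\bf v}).
\]
As ${\bf v}$ was arbitrary, $\bar{\bf y}$ minimizes $E$ over $[H^2(\Omega\setminus\Sigma)\cap H^1(\Omega)]^3$, and in particular the infimum is attained. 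Taking ${\bf v}=\bar{\bf y}$ with a recovery sequence ${\bf w}_h\to\bar{\bf y}$ and using almost-minimality once more gives $\limsup_{h\to0^+}E_h({\bf y}_h)\le\limsup_{h\to0^+}E_h({\bf w}_h)\le E(\bar{\bf y})$, which together with the lim-inf bound above yields $\lim_{h\to0^+}E_h({\bf y}_h)=E(\bar{\bf y})$ along the chosen subsequence.

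I expect the genuine difficulty to lie not in this assembly but in the two inputs it relies on. The lim-inf inequality is the delicate one: passing to the limit in the bending part uses the weak convergence $H_h({\bf y}_h)\rightharpoonup\widetilde{D}^2\bar{\bf y}$ from Lemma~\ref{lemma:weak_conv_hessian} together with weak lower semicontinuity of the (convex) bending integrand, while the \emph{nonconvex} stretching energy $E_h^S$ forces one to upgrade to \emph{strong} $L^2$-convergence of $\nabla{\bf y}_h$ (supplied by Lemma~\ref{lemma:compactness}), combined with the $L^4$-bound from the discrete Sobolev inequality (Lemma~\ref{lemma:disc_sobolev}), so that $\nabla{\bf y}_h^T\nabla{\bf y}_h$ converges in a strong enough sense to treat the quartic term. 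The lim-sup inequality needs a concrete recovery sequence, naturally the Lagrange interpolant taken separately on $\Omega_1$ and $\Omega_2$, whose discrete Hessian converges strongly by Lemma~\ref{lemma:strong_conv_hessian}, together with the usual density reduction to smooth deformations. Within Theorem~\ref{thm:Gamma_conv} itself the only point requiring care is the translation bookkeeping that lets one freely pass between the normalized sequence $\bar{\bf y}_h$ (to which compactness and lim-inf apply), the un-normalized almost-minimizers ${\bf y}_h$, and recovery sequences of mean-zero competitors.
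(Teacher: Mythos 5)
Your proposal is correct and follows essentially the same route the paper intends: the paper omits the details, declaring the proof standard and attributing it to exactly the ingredients you assemble (partial coercivity, Lemma~\ref{lemma:compactness}, and Theorems~\ref{thm:liminf} and~\ref{thm:limsup}) via the classical sandwich argument for almost-minimizers. Your observations about translation invariance of $E_h$ and about re-running the compactness/lim-inf argument along the subsequence defining a given cluster point are the right bookkeeping and introduce no gap.
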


The proof of Theorem~\ref{thm:Gamma_conv} is standard (see \cite{DG_largebending_BNN_2021}) and follows from the lim-inf (Theorem~\ref{thm:liminf}) and lim-sup (Theorem~\ref{thm:limsup}) conditions  below, together with the compactness result of Lemma~\ref{lemma:compactness}. We note that the above theorem incorporates the possibility of a sequence of almost-minimizers rather than exact minimizers. This is to account for the fact that the algorithm proposed below (see Section~\ref{sec:gradient_flow}) uses a stopping criteria and thus does not necessarily reach the targeted minimizers.

We now establish the lim-inf and lim-sup conditions in our context, as they differ from the existing literature.

We begin with the lim-inf property.
\begin{thm}[Lim-inf of $E_h$] \label{thm:liminf}
	Let $\{ {\bf y}_h\}_{h>0} \subset [S_h^k]^3$ be a sequence of functions such that $E_h({\bf y}_h) \lesssim 1$. Then there exists $\bar{\bf y} \in [H^2(\Omega \setminus \Sigma)\cap H^1(\Omega)]^3$ with $\int_{\Omega} \bar{\bf y} = 0$ such that (up to a subsequence) the shifted sequence ${\bf \bar{y}}_h = {\bf y}_h - \strokedint_{\Omega} {\bf y}_h  \in [S_h^k]^3$ satisfies ${\bf \bar{y}}_h \to \bar{\bf y}$ in $[H^1(\Omega)]^3$ as $h \to 0^+$, and 
	\begin{equation}\label{e:liminf_prop}
		E(\bar{\bf y}) \leq \liminf_{h \to 0^+} E_h(\bar{\bf y}_h).
	\end{equation}
\end{thm}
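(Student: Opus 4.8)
The plan is to split the energy into its bending and stretching parts and treat each separately, using the compactness infrastructure already in place. First I would invoke the partial coercivity Theorem~\ref{thm:coercivity_prestrain}: since $E_h(\mathbf{y}_h)\lesssim 1$, estimate \eqref{eq:partial_coercivity} gives $\|\nabla\mathbf{y}_h\|_{L^2(\Omega)}^4+\theta^2|\mathbf{y}_h|_{H^2_h(\Omega)}^2\lesssim 1$, hence both $\|\nabla\mathbf{y}_h\|_{L^2(\Omega)}$ and $|\mathbf{y}_h|_{H^2_h(\Omega)}$ are bounded uniformly in $h$. Applying Lemma~\ref{lemma:compactness} componentwise to $\bar{\mathbf{y}}_h=\mathbf{y}_h-\strokedint_\Omega\mathbf{y}_h$ yields a subsequence and a limit $\bar{\mathbf{y}}\in[H^2(\Omega\setminus\Sigma)\cap H^1(\Omega)]^3$ with mean zero such that $\bar{\mathbf{y}}_h\to\bar{\mathbf{y}}$ in $[L^2(\Omega)]^3$ and $\nabla\bar{\mathbf{y}}_h\to\nabla\bar{\mathbf{y}}$ in $[L^2(\Omega)]^{3\times2}$; this is exactly the asserted $[H^1(\Omega)]^3$ convergence. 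Note $E_h$ and $E$ are translation invariant, so we may work with $\bar{\mathbf{y}}_h$ throughout.

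Next I would handle the stretching term. Since $\mathbf g^{-1/2}\in[W^{1,\infty}]^{2\times2}$ is fixed and bounded, it suffices to show $\nabla\bar{\mathbf{y}}_h^T\nabla\bar{\mathbf{y}}_h\to\nabla\bar{\mathbf{y}}^T\nabla\bar{\mathbf{y}}$ in $[L^2(\Omega)]^{2\times2}$, after which continuity of $E^S$ under this convergence gives $E_h^S(\bar{\mathbf{y}}_h)\to E^S(\bar{\mathbf{y}})$ (in fact equality in the limit, not merely a lim-inf). To get the quadratic convergence I would use the discrete Sobolev inequality of Lemma~\ref{lemma:disc_sobolev}, which bounds $\|\nabla\bar{\mathbf{y}}_h\|_{L^4(\Omega)}\lesssim\|\bar{\mathbf{y}}_h\|_{H^2_h(\Omega)}\lesssim 1$; combining the uniform $L^4$ bound with the strong $L^2$ convergence of $\nabla\bar{\mathbf{y}}_h$ gives strong $L^p$ convergence for all $p<4$, in particular $L^3$, whence $\nabla\bar{\mathbf{y}}_h^T\nabla\bar{\mathbf{y}}_h$ converges strongly in $L^{3/2}\subset L^1$; upgrading via the uniform $L^2$ bound on the product (from the $L^4$ bound on the gradient) and interpolation yields convergence in $L^2$, and moreover shows $\bar{\mathbf{y}}$ itself lies in the admissible set $\mathbb{A}$.

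Finally, for the bending term I would use the stabilization and the weak convergence of the discrete Hessian. By Lemma~\ref{lemma:weak_conv_hessian}, $H_h(\bar{\mathbf{y}}_h)\rightharpoonup\widetilde{D}^2\bar{\mathbf{y}}$ in $[L^2(\Omega)]^{3\times2\times2}$; since $\mathbf g^{-1/2}$ is a fixed bounded multiplier, $\mathbf g^{-1/2}H_h(\bar{\mathbf{y}}_h)\mathbf g^{-1/2}\rightharpoonup\mathbf g^{-1/2}\widetilde{D}^2\bar{\mathbf{y}}\,\mathbf g^{-1/2}$ weakly in $L^2$, and the integrand of $E^B$ is a convex (indeed quadratic, nonnegative) function of this quantity since $\mu>0$, $\lambda\ge0$ and $2\mu/(2\mu+\lambda)>0$. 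Weak lower semicontinuity of convex integrands then gives $\liminf_h E_h^B(\bar{\mathbf{y}}_h)\ge E^B(\bar{\mathbf{y}})$, where I also simply discard the nonnegative stabilization term $\tfrac{\gamma}{2}\|\h^{-1/2}[\nabla\bar{\mathbf{y}}_h]\|_{L^2(\Gamma^0_h\setminus\Sigma)}^2\ge0$. Adding the stretching limit and the bending lim-inf, and using $\theta^2>0$, yields \eqref{e:liminf_prop}.

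The main obstacle is the nonconvex stretching energy: unlike the bending term, $E_h^S$ is not weakly lower semicontinuous in $\nabla\bar{\mathbf{y}}_h$, so a mere $H^1$-weak bound would be insufficient. This is precisely where the extra regularity encoded in the uniform $\|\cdot\|_{H^2_h(\Omega)}$ bound is essential: the discrete Sobolev inequality upgrades the (strong $L^2$, weak $H^1$) convergence of the gradients to strong convergence in $L^p$ for some $p>2$, which is exactly enough to pass to the limit in the quadratic nonlinearity $\nabla\bar{\mathbf{y}}_h^T\nabla\bar{\mathbf{y}}_h$. Care is also needed at the crease: all jump and Hessian terms are taken over $\Gamma_h^0\setminus\Sigma$ and $\Omega\setminus\Sigma$ respectively, and the compactness lemma already delivers $\bar{\mathbf{y}}\in H^2(\Omega\setminus\Sigma)\cap H^1(\Omega)$, so no separate argument across $\Sigma$ is required.
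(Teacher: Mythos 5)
Your overall strategy coincides with the paper's: partial coercivity (Theorem~\ref{thm:coercivity_prestrain}) plus the compactness Lemma~\ref{lemma:compactness} produce $\bar{\bf y}$ and the $[H^1(\Omega)]^3$ convergence of $\bar{\bf y}_h$; Lemma~\ref{lemma:weak_conv_hessian} and weak lower semicontinuity of convex quadratic integrands handle the bending part; the nonnegative stabilization term is discarded; and the discrete Sobolev inequality (Lemma~\ref{lemma:disc_sobolev}) is the key tool for the quadratic stretching nonlinearity. However, one step fails as written: the claim that strong convergence of $\nabla \bar{\bf y}_h^T\nabla \bar{\bf y}_h$ in $L^{3/2}(\Omega)$ together with its uniform $L^2(\Omega)$ bound can be upgraded ``by interpolation'' to strong convergence in $L^2(\Omega)$. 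Interpolation between $L^{3/2}$ and the endpoint $L^2$ only yields strong convergence in $L^p$ for $p<2$; to reach $p=2$ you would need a uniform bound in some $L^q$ with $q>2$ for the product, i.e.\ a uniform $L^q$ bound with $q>4$ on the gradients, which Lemma~\ref{lemma:disc_sobolev} does not give. Hence your assertion that $E_h^S(\bar{\bf y}_h)\to E^S(\bar{\bf y})$ is not justified by the argument you present.

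Fortunately, the theorem does not require this stronger statement. What your argument does deliver (uniform $L^2$ bound on $\nabla\bar{\bf y}_h^T\nabla\bar{\bf y}_h$ plus strong convergence in $L^{3/2}$, or, as in the paper, a direct estimate of $\int_\Omega \boldsymbol{\phi}:(\nabla\bar{\bf y}_h^T\nabla\bar{\bf y}_h-\nabla\bar{\bf y}^T\nabla\bar{\bf y})$ for fixed $\boldsymbol{\phi}\in[L^2]^{2\times2}$ via \eqref{eq:identity_matrices}) is the \emph{weak} convergence $\nabla\bar{\bf y}_h^T\nabla\bar{\bf y}_h\rightharpoonup\nabla\bar{\bf y}^T\nabla\bar{\bf y}$ in $[L^2(\Omega)]^{2\times2}$. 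Since the stretching integrand is a nonnegative quadratic, hence convex, function of the strain ${\bf g}^{-1/2}(\nabla{\bf y}^T\nabla{\bf y}-{\bf g}){\bf g}^{-1/2}$, weak lower semicontinuity then gives $\liminf_{h\to0^+}E_h^S(\bar{\bf y}_h)\ge E^S(\bar{\bf y})$, in complete analogy with your bending argument; this is exactly the paper's route, and it suffices for \eqref{e:liminf_prop}. If you insist on the full limit $E_h^S(\bar{\bf y}_h)\to E^S(\bar{\bf y})$, it does hold, but the correct justification is strong $L^4$ convergence of the gradients, obtained not by interpolation from below but from the smoothing operator: $\Pi_h\nabla\bar{\bf y}_h$ is bounded in $[H^1(\Omega\setminus\Sigma)]^{3\times2}$ by \eqref{eq:interp_prop_1}--\eqref{eq:interp_prop_2}, hence precompact in $L^4$, while $\nabla\bar{\bf y}_h-\Pi_h\nabla\bar{\bf y}_h$ vanishes in $L^4$ by an elementwise inverse inequality combined with \eqref{eq:interp_prop_2} and quasi-uniformity. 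The remaining difference from the paper, namely that you pass to the limit in the Hessian terms globally whereas the paper argues on $\Omega_1$, $\Omega_2$ and uses superadditivity of $\liminf$, is immaterial: weak convergence on the two subdomains adds up to weak convergence on $\Omega$.
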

\begin{proof}
	First, we use the partial coercivity \eqref{eq:partial_coercivity} of $E_h$ provided by Theorem \ref{thm:coercivity_prestrain}, along with the boundedness assumption $E_h({\bf y}_h) \lesssim 1$ to get 
	\begin{equation}\label{eq:boundedness}
		||\nabla {\bf y}_h||_{L^2(\Omega)}^2 + |{\bf y}_h|_{H_h^2(\Omega)}^2  \lesssim \max (1, \theta^{-2}),
	\end{equation}
	applying \eqref{eq:partial_coercivity} individually to each of the terms on the left-hand side due to the difference in scaling of the $L^2(\Omega)$ norms.
	We then apply the compactness result (Lemma \ref{lemma:compactness}) componentwise with $v_h =  y_{h,m}$ to guarantee the existence of $\bar{\bf y} \in [H^2(\Omega\setminus \Sigma)\cap H^1(\Omega)]^3$ with mean value zero such that $\bar{\bf y}_h \to \bar{\bf y}$ in $[L^2(\Omega)]^3$ and $\nabla \bar{\bf y}_h \to \nabla \bar{\bf y}$ in $[L^2(\Omega)]^{3 \times 2}$. 
	
	Next, we establish the lim-inf relation  by showing the weak convergence of the integrands. Since we only have weak convergence of the discrete Hessian on the subdomains $\Omega_1$ and $\Omega_2$, we show the lim-inf relation on each subdomain, and then use the property 
	\begin{equation} \label{eq:liminfprop}
		\liminf_{h \to 0^+} E_h^1(\bar{\bf y}_h) + \liminf_{h \to 0^+} E_h^2(\bar{\bf y}_h) \leq \liminf_{h \to 0^+} (E_h^1(\bar{\bf y}_h)+E_h^2(\bar{\bf y}_h)), 
	\end{equation}
	where $E_h^i$ denotes the energy computed on the subdomain $\Omega_i$, $i=1,2$.
	
	Thanks to the weak convergence of the reconstructed Hessian (Lemma \ref{lemma:weak_conv_hessian}), we find that on each subdomain, $H_h(\bar{\bf y}_h) \rightharpoonup D^2 \bar{\bf y}$, which implies that ${\bf g}^{-1/2}H_h(\bar{\bf y}_h){\bf g}^{-1/2} \rightharpoonup {\bf g}^{-1/2}D^2 \bar{\bf y} {\bf g}^{-1/2}$ as $h \to 0^+$ on each subdomain. 
	In view of the weak lower semi-continuity of the $L^2(\Omega)$ norm, we get 
	\begin{equation} \label{eq:hessian_inf}
		\int_{\Omega_i} |{\bf g}^{-1/2} D^2 \bar{\bf y} {\bf g}^{-1/2}|^2 \leq \liminf_{h \to 0^+} \int_{\Omega_i} |{\bf g}^{-1/2} H_h(\bar{\bf y}_h){\bf g}^{-1/2}|^2 \qquad  i=1,2.
	\end{equation} 
	
	Similarly, the linearity of the trace term implies that
	$$\text{tr}({\bf g}^{-1/2}H_h(\bar{\bf y}_h){\bf g}^{-1/2}) \rightharpoonup \text{tr}({\bf g}^{-1/2}D^2 \bar{\bf y} {\bf g}^{-1/2}) \qquad \text{as}\quad  h \to 0^+$$
	on each subdomain, which implies that
	\begin{equation} \label{eq:trace_hess_inf}
		\int_{\Omega_i} \text{tr}({\bf g}^{-1/2} D^2 \bar{\bf y} {\bf g}^{-1/2})^2 \leq \liminf_{h \to 0^+} \int_{\Omega_i} \text{tr}({\bf g}^{-1/2}H_h(\bar{\bf y}_h) {\bf g}^{-1/2})^2 \qquad  i=1,2.
	\end{equation}
	
	For the nonlinear stretching term, we let  ${\boldsymbol \phi} \in [L^2(\Omega_i)]^{2 \times 2}$ and compute
	\begin{align*}
		&\int_{\Omega_i} \boldsymbol{\phi} : (\nabla \bar{\bf y}_h^T \nabla \bar{\bf y}_h - \nabla \bar{\bf y}^T \nabla \bar{\bf y}) =  \int_{\Omega_i} \boldsymbol{\phi}: [(\nabla \bar{\bf y}_h - \nabla \bar{\bf y})^T( \nabla \bar{\bf y}_h - \nabla \bar{\bf y})] \\
		&\qquad \qquad \qquad + \int_{\Omega} \boldsymbol{\phi} : [(\nabla \bar{\bf y}_h-\nabla \bar{\bf y})^T\nabla \bar{\bf y}] +\int_{\Omega_i} \boldsymbol{\phi}: [\nabla \bar{\bf y}^T (\nabla \bar{\bf y}_h - \nabla \bar{\bf y})] \\
		& \qquad \leq ||\boldsymbol{\phi}||_{L^2(\Omega_i)}||\nabla \bar{\bf y}_h - \nabla \bar{\bf y}||_{L^4(\Omega_i)}^2 +2||\boldsymbol{\phi}||_{L^2(\Omega_i)}||\nabla \bar{\bf y}_h - \nabla \bar{\bf y}||_{L^4(\Omega_i)} ||\nabla \bar{\bf y}||_{L^4(\Omega_i)},
	\end{align*}
	where we used the identity
	\begin{equation} \label{eq:identity_matrices}
		{\bf A}^T{\bf A}-{\bf B}^T{\bf B} = ({\bf A}-{\bf B})^T({\bf A}-{\bf B})+({\bf A}-{\bf B})^T{\bf B} + {\bf B}^T({\bf A}-{\bf B}), \quad {\bf A},{\bf B}\in\mathbb{R}^{3\times 2}.   
	\end{equation}
	
	Note that the boundedness (\ref{eq:boundedness}) implies that $\nabla \bar{\bf y}_h \to \nabla \bar{\bf y}$ strongly in $[L^4(\Omega_i)]^{3 \times 2}$. Thus,
	$$||\nabla \bar{\bf y}_h - \nabla \bar{\bf y}||_{L^4(\Omega_i)} \to 0 \qquad \textrm{as}\quad h\to 0^+,$$
	which proves that $\nabla \bar{\bf y}_h^T \nabla \bar{\bf y}_h  \rightharpoonup \nabla \bar{\bf y}^T \nabla \bar{\bf y}$ in $[L^2(\Omega_i)]^{2 \times 2}$ as $h\to 0^+$.
	Thanks to the weak lower semi-continuity of the $L^2(\Omega_i)$ norm, we deduce that for $i=1,2$
	\begin{equation} \label{eq:nonlinear_inf}
		\int_{\Omega_i} |{\bf g}^{-1/2}(\nabla \bar{\bf y}^T \nabla \bar{\bf y}- {\bf g}){\bf g}^{-1/2}|^2 \leq \liminf_{h \to 0^+} \int_{\Omega_i} |{\bf g}^{-1/2}( \nabla \bar{\bf y}_h^T \nabla \bar{\bf y}_h - {\bf g}) {\bf g}^{-1/2}|^2.
	\end{equation}
	Similarly, for the trace term,
	\begin{equation} \label{eq:trace_nonlin_inf}
		\int_{\Omega_i} \text{tr} ({\bf g}^{-1/2} (\nabla \bar{\bf y}^T \nabla \bar{\bf y}-{\bf g}) {\bf g}^{-1/2})^2 \leq \liminf_{h \to 0^+} \int_{\Omega_i} \text{tr}({\bf g}^{-1/2} (\nabla \bar{\bf y}_h^T \nabla \bar{\bf y}_h-{\bf g}) {\bf g}^{-1/2})^2.
	\end{equation}
	
	The desired lim-inf property \eqref{e:liminf_prop} follows from relations \eqref{eq:hessian_inf}, \eqref{eq:trace_hess_inf}, \eqref{eq:nonlinear_inf},  \eqref{eq:trace_nonlin_inf}, \eqref{eq:liminfprop}, and the positivity of the jump terms in $E_h(\bar{\bf y}_h)$.
	
\end{proof}

We now give a proof of the lim-sup property. Because of the strong convergence of the discrete Hessian, we actually get strong convergence of the recovery sequence.

\begin{thm}[Lim-sup of $E_h$]\label{thm:limsup}
	For any ${\bf y} \in [H^2(\Omega \setminus \Sigma)\cap H^1(\Omega)]^3$, there exists $\{ {\bf y}_h \}_{h>0} \subset [S_h^k]^3$ such that ${\bf y}_h \to {\bf y}$ in $[H^1(\Omega)]^3$ as $h \to 0^+$ and $E({\bf y}) = \lim_{h \to 0^+} E_h({\bf y}_h)$.
\end{thm}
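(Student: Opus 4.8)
The natural approach is to take the recovery sequence to be the componentwise Lagrange interpolant, ${\bf y}_h := \mathcal{I}_h^k {\bf y}$, defined separately on each subdomain $\Omega_1, \Omega_2$. Since ${\bf y} \in [H^1(\Omega)]^3$, the jumps of ${\bf y}_h$ across $\Sigma$ vanish, so ${\bf y}_h \in [S_h^k]^3$ as required; this is exactly the setup of Lemma~\ref{lemma:strong_conv_hessian}. First I would record the two convergences that come essentially for free: by standard interpolation estimates \eqref{eq:interp2} (applied on each subdomain and summed), ${\bf y}_h \to {\bf y}$ in $[H^1(\Omega)]^3$, and in particular $\nabla {\bf y}_h \to \nabla {\bf y}$ strongly in $[L^2(\Omega)]^{3\times 2}$. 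By Lemma~\ref{lemma:strong_conv_hessian}, we also have $H_h({\bf y}_h) \to \widetilde{D}^2 {\bf y}$ strongly in $[L^2(\Omega)]^{3\times 2\times 2}$ and $\|\h^{-1/2}[\nabla {\bf y}_h]\|_{L^2(\Gamma_h^0\setminus\Sigma)} = \|\h^{-1/2}[\nabla({\bf y}_h - {\bf y})]\|_{L^2(\Gamma_h^0\setminus\Sigma)} \to 0$, the latter following from the computation inside the proof of that lemma (scaled trace estimate plus interpolation). This last fact immediately kills the stabilization term in $E_h^B$ in the limit.

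Next I would pass to the limit in each of the three types of integrands. For the two bending terms, since ${\bf g}^{-1/2} \in [W^{1,\infty}(\Omega)]^{2\times2}$ is a fixed bounded matrix field, the strong $L^2$ convergence $H_h({\bf y}_h) \to \widetilde{D}^2{\bf y}$ gives ${\bf g}^{-1/2}H_h({\bf y}_h){\bf g}^{-1/2} \to {\bf g}^{-1/2}\widetilde{D}^2{\bf y}\,{\bf g}^{-1/2}$ strongly in $L^2$, hence the quadratic quantities $|{\bf g}^{-1/2}H_h({\bf y}_h){\bf g}^{-1/2}|^2$ and $\text{tr}(\cdots)^2$ converge in $L^1(\Omega)$; integrating gives convergence of $E_h^B({\bf y}_h) \to E^B({\bf y})$ (with $\Omega$ replaced by $\Omega\setminus\Sigma$, which is immaterial for the integral). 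For the stretching term, the key point is that $\nabla {\bf y}_h^T \nabla {\bf y}_h \to \nabla{\bf y}^T \nabla{\bf y}$ strongly in $L^2(\Omega)$: one writes the difference using the identity \eqref{eq:identity_matrices} and bounds it by $\|\nabla{\bf y}_h - \nabla{\bf y}\|_{L^4}^2 + \|\nabla{\bf y}_h - \nabla{\bf y}\|_{L^4}\|\nabla{\bf y}\|_{L^4}$, which tends to zero provided $\nabla{\bf y}_h \to \nabla{\bf y}$ in $[L^4(\Omega)]^{3\times2}$. Then ${\bf g}^{-1/2}(\nabla{\bf y}_h^T\nabla{\bf y}_h - {\bf g}){\bf g}^{-1/2} \to {\bf g}^{-1/2}(\nabla{\bf y}^T\nabla{\bf y} - {\bf g}){\bf g}^{-1/2}$ strongly in $L^2$, and the same squaring/integration argument yields $E_h^S({\bf y}_h) \to E^S({\bf y})$. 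Adding the two contributions with the factor $\theta^2$ gives $E_h({\bf y}_h) \to E({\bf y})$.

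The one genuine gap in the above is the $L^4$ convergence of gradients, which does not follow from the $H^1$ convergence alone. I would close it by combining the discrete Sobolev inequality of Lemma~\ref{lemma:disc_sobolev} with the interpolation estimates: writing $\nabla{\bf y}_h - \nabla{\bf y} = \nabla({\bf y}_h - \mathcal{I}_h^2{\bf y}) + \ldots$ is not quite the right move since ${\bf y}$ itself is only $H^2$; instead, the cleanest route is to approximate ${\bf y}|_{\Omega_i}$ by smooth $v_i^\epsilon \to {\bf y}|_{\Omega_i}$ in $H^2(\Omega_i)$ (as in the proof of Lemma~\ref{lemma:strong_conv_hessian}), use $\|\nabla({\bf y}_h - \mathcal{I}_h^k v_i^\epsilon)\|_{L^4(\Omega_i)} \lesssim \|{\bf y}_h - \mathcal{I}_h^k v_i^\epsilon\|_{H_h^2(\Omega_i)}$ via Lemma~\ref{lemma:disc_sobolev} together with \eqref{eq:interp1}--\eqref{eq:interp2}, and then a triangle-inequality/$\epsilon$-argument as at the end of the proof of Lemma~\ref{lemma:strong_conv_hessian}. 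Alternatively — and perhaps more simply — since $H^2(\Omega_i) \hookrightarrow W^{1,4}(\Omega_i)$ in two dimensions, the Lagrange interpolant satisfies $\nabla\mathcal{I}_h^k{\bf y} \to \nabla{\bf y}$ in $L^4$ directly by density of smooth functions and the $W^{1,4}$-stability of $\mathcal{I}_h^k$ on each $\Omega_i$; I would check which of these two routes is least technical given the isoparametric setting. This $L^4$ step is the main (and essentially only) obstacle; everything else is bookkeeping with strong $L^2$ convergences.
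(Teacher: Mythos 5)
Your proposal is correct and follows essentially the same route as the paper: the recovery sequence is the piecewise Lagrange interpolant, the bending part and the stabilization term are handled by Lemma~\ref{lemma:strong_conv_hessian}, and the stretching part is reduced via the identity \eqref{eq:identity_matrices} to the $L^4$-convergence of gradients. The only difference is that you treat that $L^4$ step with more care than the paper (which disposes of it by invoking the $H^1$-convergence and the embedding $H^1(\Omega)\subset L^4(\Omega)$); both of your proposed fixes work, and the first can be streamlined by applying \eqref{eq:intermediate_disc_sobolev} directly to $\nabla({\bf y}_h-{\bf y})$ and using that $\|D_h^2{\bf y}_h-\widetilde{D}^2{\bf y}\|_{L^2(\Omega)}$ and $\|\h^{-1/2}[\nabla({\bf y}_h-{\bf y})]\|_{L^2(\Gamma_h^0\setminus\Sigma)}$ vanish by Lemma~\ref{lemma:strong_conv_hessian}.
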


\begin{proof}
	We show that the sequence ${\bf y}_h$ of piecewise Lagrange interpolants (as defined in Lemma \ref{lemma:strong_conv_hessian}) satisfies the desired properties. First, the interpolation estimates \eqref{eq:interp2} with $m=1$ and $k'=1$ give directly that ${\bf y}_h \to {\bf y}$ in $[H^1(\Omega)]^3$.
	
	Next, we establish convergence of the bending and stretching parts separately.
	The convergence of the bending term,
	\begin{equation}\label{eq:limsup_B}
		\begin{split}
			\lim_{h \to 0^+} \bigl(\int_{\Omega} |{\bf g}^{-1/2} H_h({\bf y}_h) {\bf g}^{-1/2}|^2 &+ \int_\Omega \text{tr}({\bf g}^{-1/2} H_h({\bf y}_h) {\bf g}^{-1/2})^2+\frac{\gamma}{2}||\h^{-1/2}[\nabla {\bf y}_h]||^2_{L^2(\Gamma_h^0  \setminus \Sigma})\bigr) \\
			&= \int_{\Omega} |{\bf g}^{-1/2}  \widetilde D^2 {\bf y} {\bf g}^{-1/2}|^2 + \text{tr}({\bf g}^{-1/2}  \widetilde D^2{\bf y} {\bf g}^{-1/2})^2, 
		\end{split}
	\end{equation}
	follows from the strong convergence of the reconstructed Hessian and the property of the jump term established in Lemma~\ref{lemma:strong_conv_hessian}.
	
	For the stretching term, we use again the identity \eqref{eq:identity_matrices}, so that a Cauchy-Schwarz inequality yields
	\begin{equation*}
		||\nabla {\bf y}_h^T \nabla {\bf y}_h - \nabla {\bf y}^T \nabla {\bf y}||_{L^2(\Omega)} \leq ||\nabla {\bf y}_h - \nabla {\bf y}||_{L^4(\Omega)}^2 + 2||\nabla {\bf y}_h - \nabla {\bf y}||_{L^4(\Omega)} ||\nabla {\bf y}||_{L^4(\Omega)}. 
	\end{equation*}
	The convergence $\nabla {\bf y}_h \to \nabla {\bf y}$ in $[L^4(\Omega)]^{3\times 2}$ as $h \to 0^+$ follows from the already established $H^1(\Omega)$-convergence of the same quantities along with the continuous embedding $H^1(\Omega) \subset L^4(\Omega)$. From this we infer that 
	$$\lim_{h \to 0^+} (\nabla {\bf y}_h^T \nabla {\bf y}_h - {\bf g}) =  (\nabla {\bf y}^T \nabla {\bf y} - {\bf g})$$
	and thus
	\begin{equation}\label{e:limsup_S}
		\begin{split}
			\lim_{h \to 0^+} \int_{\Omega} \bigl( & |{\bf g}^{-1/2}(\nabla {\bf y}_h^T \nabla {\bf y}_h - {\bf g}){\bf g}^{-1/2}|^2 +  \text{tr}({\bf g}^{-1/2} (\nabla {\bf y}_h^T \nabla {\bf y}_h - {\bf g}){\bf g}^{-1/2})^2\bigr) \\
			&= \int_{\Omega} |{\bf g}^{-1/2}(\nabla {\bf y}^T \nabla {\bf y} - {\bf g}){\bf g}^{-1/2}|^2 + \text{tr}({\bf g}^{-1/2} (\nabla {\bf y}^T \nabla {\bf y} - {\bf g}) {\bf g}^{-1/2})^2.
		\end{split}
	\end{equation}
	
	Relations \eqref{eq:limsup_B} and \eqref{e:limsup_S} yield the desired convergence of $E_h({\bf y}_h)$ to $E({\bf y})$ and end the proof.
\end{proof}

\section{Discrete Energy Minimization} \label{sec:gradient_flow}

Having established the convergence of global minimizers in the previous section (Theorem~\ref{thm:Gamma_conv}), we now present an energy-decreasing gradient flow for the discrete energy. As we shall see, this gradient flow is guaranteed to locate critical points of the discrete energy. 

Ideally, given an initial deformation ${\bf y}_h^0 \in [S^k_h]^3$ and a pseudo-timestep $\tau>0$, successive approximations are found to minimize 
\begin{equation*}
	[S_h^k]^3 \ni {\bf v}_h  \mapsto \frac{1}{2\tau}||{\bf v}_h-{\bf y}_h^n||_{H_h^2(\Omega)}^2 + E_h({\bf v}_h),
\end{equation*}
where the discrete norm $||\cdot||_{H^2_h(\Omega)}$ is defined in (\ref{def:discrete_H2_norm}).
Because the stretching part is nonconvex, a semi-implicit algorithm is proposed in \cite[Section 6]{Prestrain_BGNY_2022} and proved to be (conditionally) energy-decreasing. In this work, we propose a fully explicit algorithm in the stretching part, which considerably simplifies the algorithm while retaining the energy decreasing property. 

Given ${\bf y}_h^0 \in [S_h^k]^3$ and $\tau>0$, we define 
${\bf y}_h^{n+1}\in[S_h^k]^3$ to satisfy
\begin{equation} \label{eq:gf_original}
	\tau^{-1}({\bf y}_h^{n+1}-{\bf y}_h^{n}, {\bf v}_h)_{H_h^2(\Omega)} + a_h^S({\bf y}_h^n, {\bf v}_h) + \theta^2 a_h^B({\bf y}_h^{n+1}, {\bf v}_h) = 0, \ \ \ \forall {\bf v}_h \in [S_h^k]^3,
\end{equation}
where 
\begin{align*}
	a_h^B({\bf y}_h, {\bf v}_h) &:= \frac{\mu}{6} \int_{\Omega} \Big( {\bf g}^{-1/2}H_h({\bf y}_h) {\bf g}^{-1/2} \Big) : \Big( {\bf g}^{-1/2}H_h({\bf v}_h) {\bf g}^{-1/2} \Big) \\ &+ \frac{\mu\lambda}{6(2\mu+\lambda)} \int_{\Omega} \text{tr}\Big( {\bf g}^{-1/2}H_h({\bf y}_h){\bf g}^{-1/2} \Big) \text{tr} \Big( {\bf g}^{-1/2}H_h({\bf v}_h) {\bf g}^{-1/2} \Big) \\ &+ \gamma(\h^{-1}[\nabla {\bf y}_h], [\nabla {\bf v}_h])_{L^2(\Gamma_h^0\setminus \Sigma)} 
\end{align*}
and (with a slight abuse of notation) $a_h^S({\bf y}_h, {\bf v}_h) := a_h^S({\bf y}_h; {\bf y}_h,{\bf v}_h)$ with
\begin{align*}
	a_h^S({\bf z}_h; {\bf y}_h,{\bf v}_h) & :=
	\frac{\mu}{2}\int_{\Omega} \Big({\bf g}^{-1/2} (\nabla {\bf v}_h^T \nabla {\bf y}_h + \nabla {\bf y}_h^T \nabla {\bf v}_h){\bf g}^{-1/2} \Big) : \Big( {\bf g}^{-1/2}(\nabla {\bf z}_h^T \nabla{\bf z}_h - {\bf g}){\bf g}^{-1/2} \Big ) \\ &+ \frac{\lambda}{4} \int_{\Omega} \text{tr} \Big({\bf g}^{-1/2} (\nabla {\bf v}_h^T \nabla {\bf y}_h + \nabla {\bf y}_h^T \nabla {\bf v}_h){\bf g}^{-1/2} \Big) \text{tr} \Big( {\bf g}^{-1/2}(\nabla {\bf z}_h^T \nabla{\bf z}_h - {\bf g}){\bf g}^{-1/2} \Big ).
\end{align*}
In \eqref{eq:gf_original}, the forms $a_h^B({\bf y}_h, {\bf v}_h)$ and $a_h^S({\bf y}_h^n, {\bf v}_h)$ are the G\^ateau derivative of $E_h^B$ and $E_h^S$, respectively, at ${\bf y}_h$ in the direction ${\bf v}_h$.

It will be useful for us to rewrite (\ref{eq:gf_original}) using the variation $\delta {\bf y}_h^{n+1}:= {\bf y}_h^{n+1} - {\bf y}_h^n$, i.e., 
\begin{equation} \label{eq:gf}
	\tau^{-1}(\delta {\bf y}_h^{n+1}, {\bf v}_h)_{H_h^2(\Omega)}+ \theta^2 a_h^B(\delta {\bf y}_h^{n+1}, {\bf v}_h) = -a_h^S({\bf y}_h^n, {\bf v}_h) - \theta^2 a_h^B({\bf y}_h^n, {\bf v}_h).
\end{equation}

The existence and uniqueness of $\delta {\bf y}_h^{n+1}$ satisfying the linear problem \eqref{eq:gf} directly follows from the coercivity property 
\begin{equation} \label{eq:coercivity}
	\tau^{-1} ||{\bf v}_h||_{H_h^2(\Omega)}^{2} + 2\theta^2 E_h^B({\bf v}_h) = \tau^{-1} ({\bf v}_h, {\bf v}_h)_{H_h^2(\Omega)}+ \theta^2 a_h^B({\bf v}_h, {\bf v}_h) \ \forall {\bf v}_h \in [S_h^k]^3.
\end{equation}
A stability estimate for one step of the gradient flow is provided in Lemma~\ref{lemma:solvability_gf}  while Corollary~\ref{cor:energy_decay} guarantees an energy decay property.

We emphasize again that the explicit treatment of the stretching term offers a computational advantage since the system matrix (and its preconditioner) do not need to be reconstructed at each iteration. In turn, this drastically reduces the running time of the experiments provided in Section 6.

\begin{lemma} [Stability of one step of the gradient flow] \label{lemma:solvability_gf} The unique solution $\delta {\bf y}_h^{n+1} \in [S^k_h]^3$ to (\ref{eq:gf}) satisfies
	\begin{equation}\label{eq:stab}
		\tau^{-1}  \| \delta {\bf y}_h^{n+1} \|_{H^2_h(\Omega)}^2 + 2\theta^2 E_h^B(\delta {\bf y}_h^{n+1}) \lesssim \tau E_h^S({\bf y}_h^n) \| \nabla {\bf y}_h^{n} \|_{L^4(\Omega)}^2+ \frac{\theta^2}{8}E_h^B({\bf y}_h^n).
	\end{equation}
	
\end{lemma}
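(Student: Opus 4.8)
I would test the variational equation \eqref{eq:gf} with the natural choice $\mathbf{v}_h = \delta\mathbf{y}_h^{n+1}$ and then estimate the right-hand side. The left-hand side is exactly the coercive quantity appearing in \eqref{eq:coercivity}, namely $\tau^{-1}\|\delta\mathbf{y}_h^{n+1}\|_{H^2_h(\Omega)}^2 + \theta^2 a_h^B(\delta\mathbf{y}_h^{n+1},\delta\mathbf{y}_h^{n+1}) = \tau^{-1}\|\delta\mathbf{y}_h^{n+1}\|_{H^2_h(\Omega)}^2 + 2\theta^2 E_h^B(\delta\mathbf{y}_h^{n+1})$, so the whole task reduces to bounding
\[
-a_h^S(\mathbf{y}_h^n,\delta\mathbf{y}_h^{n+1}) - \theta^2 a_h^B(\mathbf{y}_h^n,\delta\mathbf{y}_h^{n+1}).
\]

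\textbf{Bending term.} For $\theta^2 a_h^B(\mathbf{y}_h^n,\delta\mathbf{y}_h^{n+1})$ I would use that $a_h^B$ is a bounded symmetric bilinear form (the Cauchy--Schwarz inequality associated with the quadratic form $2E_h^B$, using that $\mathbf{g}^{-1/2}$ is bounded), giving $|a_h^B(\mathbf{y}_h^n,\delta\mathbf{y}_h^{n+1})| \le 2\sqrt{E_h^B(\mathbf{y}_h^n)}\sqrt{E_h^B(\delta\mathbf{y}_h^{n+1})}$, and then a weighted Young inequality to split this as $\varepsilon\,\theta^2 E_h^B(\delta\mathbf{y}_h^{n+1}) + C_\varepsilon\,\theta^2 E_h^B(\mathbf{y}_h^n)$; choosing the absorption constant so that the residual coefficient on $E_h^B(\delta\mathbf{y}_h^{n+1})$ matches the $\frac{\theta^2}{8}$ appearing in the statement (i.e. absorbing a fraction of the left-hand side's $2\theta^2 E_h^B(\delta\mathbf{y}_h^{n+1})$).

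\textbf{Stretching term.} For $a_h^S(\mathbf{y}_h^n,\delta\mathbf{y}_h^{n+1}) = a_h^S(\mathbf{y}_h^n;\mathbf{y}_h^n,\delta\mathbf{y}_h^{n+1})$ I would bound the two integrals by $\|\mathbf{g}^{-1/2}\cdot\|$-type factors: the factor $\|\mathbf{g}^{-1/2}(\nabla(\mathbf{y}_h^n)^T\nabla\mathbf{y}_h^n - \mathbf{g})\mathbf{g}^{-1/2}\|_{L^2(\Omega)}$ is controlled by $\sqrt{E_h^S(\mathbf{y}_h^n)}$, while the factor $\|\mathbf{g}^{-1/2}(\nabla(\delta\mathbf{y}_h^{n+1})^T\nabla\mathbf{y}_h^n + \nabla(\mathbf{y}_h^n)^T\nabla\delta\mathbf{y}_h^{n+1})\mathbf{g}^{-1/2}\|_{L^2(\Omega)}$ is handled by Hölder ($L^4\cdot L^4\to L^2$) as $\lesssim \|\nabla\delta\mathbf{y}_h^{n+1}\|_{L^4(\Omega)}\|\nabla\mathbf{y}_h^n\|_{L^4(\Omega)}$. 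Then invoke the discrete Sobolev inequality (Lemma~\ref{lemma:disc_sobolev}) to get $\|\nabla\delta\mathbf{y}_h^{n+1}\|_{L^4(\Omega)} \lesssim \|\delta\mathbf{y}_h^{n+1}\|_{H^2_h(\Omega)}$, so altogether $|a_h^S(\mathbf{y}_h^n,\delta\mathbf{y}_h^{n+1})| \lesssim \sqrt{E_h^S(\mathbf{y}_h^n)}\,\|\nabla\mathbf{y}_h^n\|_{L^4(\Omega)}\,\|\delta\mathbf{y}_h^{n+1}\|_{H^2_h(\Omega)}$. A Young inequality with weight $\tau$ (to match the $\tau^{-1}$ on the left) turns this into $\tfrac12\tau^{-1}\|\delta\mathbf{y}_h^{n+1}\|_{H^2_h(\Omega)}^2 + C\tau\, E_h^S(\mathbf{y}_h^n)\|\nabla\mathbf{y}_h^n\|_{L^4(\Omega)}^2$, the first piece being absorbed into the left-hand side.

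\textbf{Main obstacle.} The routine part is the bilinear-form Cauchy--Schwarz/Young bookkeeping; the delicate point is the bookkeeping of constants so that exactly a $\tau^{-1}\|\delta\mathbf{y}_h^{n+1}\|_{H^2_h}^2$ and a $2\theta^2 E_h^B(\delta\mathbf{y}_h^{n+1})$ survive on the left after absorption, with the bending-residual constant coming out as the stated $\tfrac18$ rather than something larger — this forces the Young weights in the bending estimate to be chosen so that strictly less than half of $2\theta^2 E_h^B(\delta\mathbf{y}_h^{n+1})$ is consumed. One should also double-check that the hidden constant in the final $\tau E_h^S(\mathbf{y}_h^n)\|\nabla\mathbf{y}_h^n\|_{L^4(\Omega)}^2$ term genuinely depends only on $\mu,\lambda,\gamma,\mathbf{g}$ and the Sobolev constant, all of which are $\theta$- and $h$-independent, so the $\lesssim$ in \eqref{eq:stab} is legitimate.
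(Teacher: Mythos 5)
Your proposal is correct and follows essentially the same route as the paper: test \eqref{eq:gf} with $\mathbf{v}_h=\delta\mathbf{y}_h^{n+1}$, use the coercivity identity \eqref{eq:coercivity}, and bound $-a_h^S(\mathbf{y}_h^n,\cdot)-\theta^2 a_h^B(\mathbf{y}_h^n,\cdot)$ via Cauchy--Schwarz, H\"older and the discrete Sobolev inequality of Lemma~\ref{lemma:disc_sobolev}; the paper merely packages the final step as a weighted Cauchy--Schwarz of the form $ab+cd\le(a^2+c^2)^{1/2}(b^2+d^2)^{1/2}$ followed by dividing by the square root of the left-hand side, rather than your Young-plus-absorption bookkeeping. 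Your worry about hitting the exact factor $\tfrac{\theta^2}{8}$ is moot, since \eqref{eq:stab} is stated with $\lesssim$ and the hidden constant (depending only on $\mu,\lambda,\gamma,\mathbf{g},\Omega$) absorbs any fixed choice of Young weights.
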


\begin{proof}
	On the one hand, we have already noted that the bilinear form $\tau^{-1}(\cdot, \cdot)_{H_h^2(\Omega)} + \theta^2 a_h^B(\cdot, \cdot)$ is coercive, see \eqref{eq:coercivity}. 
	On the other hand, given ${\bf y}_h^n \in [S_h^k]^3$, the discrete and continuous Cauchy-Schwarz inequalities imply that the mapping
	$$
	{\bf v}_h \mapsto F({\bf v}_h):=-a_h^S({\bf y}_h^n, {\bf v}_h) - \theta^2 a_h^B({\bf y}_h^n, {\bf v}_h) 
	$$
	satisfies for all ${\bf v}_h \in [S_h^k]^3$
	\begin{equation*} 
		\begin{split}
			F({\bf v}_h) &\lesssim  E_h^{S}({\bf y}_h^n)^{1/2}||\nabla {\bf y}_h^n||_{L^4(\Omega)} ||\nabla {\bf v}_h||_{L^4(\Omega)} + \theta^2 E_h^B({\bf y}_h^n)^{1/2} E_h^B({\bf v}_h)^{1/2}
			\\
			& \lesssim E_h^{S}({\bf y}_h^n)^{1/2}||\nabla {\bf y}_h^n||_{L^4(\Omega)} || {\bf v}_h||_{H^2_h(\Omega)} + \theta^2 E_h^B({\bf y}_h^n)^{1/2} E_h^B({\bf v}_h)^{1/2}
			\\
			& \lesssim \left( \tau E_h^S({\bf y}_h^n)\| \nabla {\bf y}_h^n \|_{L^4(\Omega)}^2  + \frac{\theta^2}{8} E_h^B({\bf y}_h^n) \right)^{1/2}
			\left( \tau^{-1} \| {\bf v}_h \|_{H^2_h(\Omega)}^2 + 2\theta^2 E_h^B({\bf v}_h) \right)^{1/2},
		\end{split}
	\end{equation*}
	where we invoked the discrete Sobolev inequality provided by Lemma~\ref{lemma:disc_sobolev} to justify the second inequality.
	
	Consequently, choosing ${\bf v}_h = \delta {\bf y}_h^{n+1}$ in \eqref{eq:gf} yields \eqref{eq:stab}. 
\end{proof}

We now proceed to the main result of this section and prove that the gradient flow is energy-decreasing. We do this in two steps. First we establish that the energy decreases at each step of the gradient flow, provided that $\tau$ satisfies a condition that depends on the energy at the current step. Then, by induction, we establish a condition on $\tau$ that is actually uniform over all timesteps. 

\begin{prop} [Energy decay for the gradient flow] \label{prop:energy_decay}
	Let ${\bf y}_h^n \in [S_h^k]^3$ and define
	\begin{equation}
		\label{eq:d}
		d_{h, \tau} ({\bf y}_h^n) :=  (1+\tau^2 E_h({\bf y}_h^n))  \big(h_{\min}^{-1}(1+E_h({\bf y}_h^n)^{1/2})\big)+\tau  E_h({\bf y}_h^n)+E_h({\bf y}_h^n)^{1/2}\big)>0,
	\end{equation}
	where $h_{\min}:=\min_{T\in\mathcal{T}_h}h_T$.
	There exists a constant $C>0$ independent of $h$, $n$ and $\theta$ such that if
	\begin{equation}\label{eq:tau_cond2}
		\tau \leq  \frac 1{C d_{h, \tau}({\bf y}_h^n)}
	\end{equation}
	then
	\begin{equation} \label{eq:energy_decay}
		E_h({\bf y}_h^{n+1}) + \frac{1}{2\tau}||\delta {\bf y}_h^{n+1}||_{H_h^2(\Omega)}^2 \leq E_h({\bf y}_h^n).
	\end{equation}
\end{prop}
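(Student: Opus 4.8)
The plan is to expand $E_h(\mathbf{y}_h^{n+1})$ exactly about $\mathbf{y}_h^n$ in the increment $\delta := \delta\mathbf{y}_h^{n+1} = \mathbf{y}_h^{n+1} - \mathbf{y}_h^n$, cancel the first-order term using the gradient flow equation \eqref{eq:gf} tested with $\delta$, and absorb the remaining super-quadratic-in-$\delta$ remainder into $\tfrac{1}{2\tau}\|\delta\|_{H_h^2(\Omega)}^2$ under the smallness condition \eqref{eq:tau_cond2}. Write $E_h^S(\mathbf{y}_h) = S(\nabla\mathbf{y}_h^T\nabla\mathbf{y}_h - \mathbf{g})$, where $S(\mathbf{N}) := \tfrac18\int_\Omega\bigl(2\mu|\mathbf{g}^{-1/2}\mathbf{N}\mathbf{g}^{-1/2}|^2 + \lambda\,\text{tr}(\mathbf{g}^{-1/2}\mathbf{N}\mathbf{g}^{-1/2})^2\bigr)$ is a bounded nonnegative quadratic form, with associated symmetric bilinear form $B$, so $S(\mathbf{N}) = \tfrac12 B(\mathbf{N},\mathbf{N})$. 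Since $(\nabla\mathbf{y}_h^{n+1})^T\nabla\mathbf{y}_h^{n+1} - \mathbf{g} = \mathbf{M}_n + \mathbf{L} + \mathbf{Q}$ with $\mathbf{M}_n := (\nabla\mathbf{y}_h^n)^T\nabla\mathbf{y}_h^n - \mathbf{g}$, $\mathbf{L} := (\nabla\delta)^T\nabla\mathbf{y}_h^n + (\nabla\mathbf{y}_h^n)^T\nabla\delta$, $\mathbf{Q} := (\nabla\delta)^T\nabla\delta$, and since by the chain rule $B(\mathbf{M}_n,\mathbf{L})$ is precisely the G\^ateaux derivative $a_h^S(\mathbf{y}_h^n,\delta)$, expanding $S$ gives the exact identity $E_h^S(\mathbf{y}_h^{n+1}) = E_h^S(\mathbf{y}_h^n) + a_h^S(\mathbf{y}_h^n,\delta) + R_S$ with $R_S := B(\mathbf{M}_n,\mathbf{Q}) + S(\mathbf{L}+\mathbf{Q})$. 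Because $E_h^B$ is quadratic and $H_h(\cdot)$, $[\nabla\cdot]$ are linear, $E_h^B(\mathbf{y}_h^{n+1}) = E_h^B(\mathbf{y}_h^n) + a_h^B(\mathbf{y}_h^n,\delta) + E_h^B(\delta)$ exactly. Testing \eqref{eq:gf} with $\mathbf{v}_h = \delta$ gives $a_h^S(\mathbf{y}_h^n,\delta) + \theta^2 a_h^B(\mathbf{y}_h^n,\delta) = -\tau^{-1}\|\delta\|_{H_h^2(\Omega)}^2 - 2\theta^2 E_h^B(\delta)$, and combining the three relations yields
\[
E_h(\mathbf{y}_h^{n+1}) + \tfrac{1}{2\tau}\|\delta\|_{H_h^2(\Omega)}^2 = E_h(\mathbf{y}_h^n) - \tfrac{1}{2\tau}\|\delta\|_{H_h^2(\Omega)}^2 - \theta^2 E_h^B(\delta) + R_S .
\]
Since $\theta^2 E_h^B(\delta) \ge 0$, it suffices to prove $R_S \le \tfrac{1}{2\tau}\|\delta\|_{H_h^2(\Omega)}^2$ (the case $\delta = 0$ being trivial).

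For the core estimate I would bound $R_S$ by a multiple of $\|\delta\|_{H_h^2(\Omega)}^2$. Using continuity of $S$ and $B$ on $[L^2(\Omega)]^{2\times 2}$ (constants depending only on $\mathbf{g},\mu,\lambda$), the bound $\|\mathbf{M}_n\|_{L^2(\Omega)} \lesssim E_h^S(\mathbf{y}_h^n)^{1/2}$ coming from the positive-definiteness of $\mathbf{g}$, and $S(\mathbf{L}+\mathbf{Q}) \lesssim \|\mathbf{L}\|_{L^2(\Omega)}^2 + \|\mathbf{Q}\|_{L^2(\Omega)}^2$, this reduces to estimating $\|\mathbf{L}\|_{L^2(\Omega)}$ and $\|\mathbf{Q}\|_{L^2(\Omega)}$. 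The key point is to handle the $\nabla\delta$ factors via the discrete Sobolev inequality (Lemma~\ref{lemma:disc_sobolev}), $\|\nabla\delta\|_{L^4(\Omega)} \lesssim \|\delta\|_{H_h^2(\Omega)}$, giving $\|\mathbf{Q}\|_{L^2(\Omega)} \lesssim \|\delta\|_{H_h^2(\Omega)}^2$ and $\|\mathbf{L}\|_{L^2(\Omega)} \lesssim \|\delta\|_{H_h^2(\Omega)}\|\nabla\mathbf{y}_h^n\|_{L^4(\Omega)}$; then an elementwise inverse estimate yields $\|\nabla\mathbf{y}_h^n\|_{L^4(\Omega)}^2 \lesssim h_{\min}^{-1}\|\nabla\mathbf{y}_h^n\|_{L^2(\Omega)}^2$, and \eqref{eqn:bound_grad_ES} together with $E_h^S(\mathbf{y}_h^n) \le E_h(\mathbf{y}_h^n)$ gives $\|\nabla\mathbf{y}_h^n\|_{L^2(\Omega)}^2 \lesssim 1 + E_h(\mathbf{y}_h^n)^{1/2}$. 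Altogether,
\[
R_S \lesssim \Bigl(E_h(\mathbf{y}_h^n)^{1/2} + h_{\min}^{-1}\bigl(1 + E_h(\mathbf{y}_h^n)^{1/2}\bigr) + \|\delta\|_{H_h^2(\Omega)}^2\Bigr)\,\|\delta\|_{H_h^2(\Omega)}^2 .
\]

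To dispose of the last factor inside the bracket, insert the one-step stability bound \eqref{eq:stab} of Lemma~\ref{lemma:solvability_gf} (again using the inverse estimate on $\|\nabla\mathbf{y}_h^n\|_{L^4(\Omega)}^2$, \eqref{eqn:bound_grad_ES}, and $E_h^S(\mathbf{y}_h^n),\theta^2 E_h^B(\mathbf{y}_h^n) \le E_h(\mathbf{y}_h^n)$), obtaining $\|\delta\|_{H_h^2(\Omega)}^2 \lesssim \tau^2 E_h(\mathbf{y}_h^n) h_{\min}^{-1}(1 + E_h(\mathbf{y}_h^n)^{1/2}) + \tau E_h(\mathbf{y}_h^n)$. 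Substituting this and dividing by $\|\delta\|_{H_h^2(\Omega)}^2$, the desired inequality $R_S \le \tfrac{1}{2\tau}\|\delta\|_{H_h^2(\Omega)}^2$ reduces to
\[
\tau\Bigl(E_h(\mathbf{y}_h^n)^{1/2} + h_{\min}^{-1}(1 + E_h(\mathbf{y}_h^n)^{1/2}) + \tau^2 E_h(\mathbf{y}_h^n) h_{\min}^{-1}(1 + E_h(\mathbf{y}_h^n)^{1/2}) + \tau E_h(\mathbf{y}_h^n)\Bigr) \lesssim 1,
\]
whose left-hand side equals $\tau\,d_{h,\tau}(\mathbf{y}_h^n)$ up to constants, with $d_{h,\tau}$ as in \eqref{eq:d}. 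Choosing $C$ larger than every accumulated hidden constant makes \eqref{eq:tau_cond2} imply this inequality, hence \eqref{eq:energy_decay}.

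The main obstacle — and the reason this is not a one-line gradient-flow computation — is the nonconvex quartic stretching term: its remainder $R_S$ carries genuine cubic and quartic contributions in $\delta$, so one must measure the $\nabla\delta$ factors sharply (via Lemma~\ref{lemma:disc_sobolev} rather than a crude inverse inequality), so that only a single negative power $h_{\min}^{-1}$ of the mesh size survives, and then reinject the stability estimate \eqref{eq:stab} so that the higher-order-in-$\delta$ terms are also higher-order in $\tau$. This bookkeeping is precisely what forces the particular form of $d_{h,\tau}$.
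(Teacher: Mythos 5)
Your proposal is correct and follows essentially the same route as the paper: you expand the stretching energy exactly about ${\bf y}_h^n$, cancel the first-order term by testing the scheme with $\delta{\bf y}_h^{n+1}$, and your remainder $R_S=B({\bf M}_n,{\bf Q})+S({\bf L}+{\bf Q})$ coincides (in properly ${\bf g}$-weighted form) with the paper's $\tfrac12\|W_h^n\|_{L^2(\Omega)}^2+\tfrac12 a_h^S({\bf y}_h^n;\delta{\bf y}_h^{n+1},\delta{\bf y}_h^{n+1})$. The subsequent estimates — discrete Sobolev inequality (Lemma~\ref{lemma:disc_sobolev}), inverse estimate giving the $h_{\min}^{-1}$ factor, the gradient bound \eqref{eqn:bound_grad_ES}, and reinjecting the stability estimate \eqref{eq:stab} to recover $d_{h,\tau}$ — are exactly those of the paper's proof.
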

\begin{proof}
	We take ${\bf v}_h = \delta {\bf y}_h^{n+1} ={\bf y}_h^{n+1}-{\bf y}_h^n$ in (\ref{eq:gf_original}) to get
	\begin{equation} \label{eqn:GF_delta}
		\tau^{-1}||\delta {\bf y}_h^{n+1}||_{H_h^2(\Omega)}^{2} + a_h^S({\bf y}_h^n; {\bf y}_h^{n}, \delta {\bf y}_h^{n+1}) + \theta^2 a_h^B({\bf y}_h^{n+1}, \delta {\bf y}_h^{n+1})=0.
	\end{equation}
	Now we proceed in three steps. 
	
	\noindent
	{\em Step 1: Energy relation.} Since $a_h^{S}({\bf y}_h^n; \cdot, \cdot)$ is bilinear and symmetric, we have 
	\begin{equation*}
		a_h^S({\bf y}_h^n; {\bf y}_h^n, \delta {\bf y}_h^{n+1}) = \frac{1}{2}a_h^S({\bf y}_h^n; {\bf y}_h^{n+1}, {\bf y}_h^{n+1}) - \frac{1}{2}a_h^S({\bf y}_h^n; {\bf y}_h^n, {\bf y}_h^n) - \frac{1}{2}a_h^S({\bf y}_h^n; \delta {\bf y}_h^{n+1}, \delta {\bf y}_h^{n+1}).
	\end{equation*}
	
	Furthermore, using the identity $(a-b)b = \frac{1}{2}a^2-\frac{1}{2}b^2-\frac{1}{2}(a-b)^2$, we get
	\begin{equation*}
		\begin{split}
			\frac{1}{2}a_h^S({\bf y}_h^n; {\bf y}_h^{n+1}, {\bf y}_h^{n+1}) - \frac{1}{2}a_h^S({\bf y}_h^n; {\bf y}_h^n, {\bf y}_h^n) &= \int_\Omega W_h^n:((\nabla {\bf y}_h^n)^T(\nabla {\bf y}_h^n)-{\bf g}) \\
			&= E_h^{S}({\bf y}_h^{n+1})-E_h^{S}({\bf y}_h^n)-\frac{1}{2}||W_h^n||_{L^2(\Omega)}^2,
		\end{split}
	\end{equation*}
	where 
	\begin{equation*}
		W_h^n := (\nabla {\bf y}_h^{n+1})^T \nabla {\bf y}_h^{n+1} - (\nabla {\bf y}_h^n)^T \nabla {\bf y}_h^n.
	\end{equation*}
	Therefore, we find that
	\begin{equation*}
		a_h^S({\bf y}_h^n; {\bf y}_h^{n+1}, \delta {\bf y}_h^{n+1}) = E_h^{S}({\bf y}_h^{n+1})-E_h^{S}({\bf y}_h^n)- \frac{1}{2}a_h^S({\bf y}_h^n; \delta {\bf y}_h^{n+1}, \delta {\bf y}_h^{n+1})-\frac{1}{2}||W_h^n||_{L^2(\Omega)}^2.
	\end{equation*}
	Similarly, for $E_h^B$, we have
	\begin{equation*}
		a_h^B({\bf y}_h^{n+1},\delta {\bf y}_h^{n+1}) = E_h^B({\bf y}_h^{n+1}) -  E_h^B({\bf y}_h^{n}) + \frac{1}{2}a_h^B(\delta {\bf y}_h^{n+1}, \delta {\bf y}_h^{n+1}) \geq E_h^B({\bf y}_h^{n+1}) - E_h^B({\bf y}_h^n).
	\end{equation*}
	Plugging these last two equations into \eqref{eqn:GF_delta} gives  
	\begin{equation} \label{eq:energy_diff_bound}
		E_h({\bf y}_h^{n+1}) - E_h({\bf y}_h^n)+\tau^{-1}||\delta {\bf y}_h^{n+1}||_{H_h^2(\Omega)}^2 \leq R_h^n,
	\end{equation}
	where
	\begin{equation*}
		R_h^n := \frac{1}{2}||W_h^n||_{L^2(\Omega)}^2 + \frac{1}{2} a_h^S({\bf y}_h^n; \delta {\bf y}_h^{n+1}, 
		\delta {\bf y}_h^{n+1}).
	\end{equation*}
	
	\noindent
	{\em Step 2: Bound for $R_h^n$.}  Now we prove that 
	\begin{equation}\label{eq:R_bound}
		|R_h^n| \leq \frac C 2 d_{h, \tau}({\bf y}_h^n)||\delta {\bf y}_h^{n+1}||_{H_h^2(\Omega)}^2,
	\end{equation}
	where $d_{h, \tau}({\bf y}_h^n)$ is defined by (\ref{eq:d}) and $C$ is a constant independent of $h$, $n$ and $\theta$.
	
	Using the relation \eqref{eq:identity_matrices}, we write $W_h^n$ as 
	\begin{equation*}
		W_h^n = (\nabla \delta {\bf y}_h^{n+1})^T \nabla {\bf y}_h^n + (\nabla {\bf y}_h^{n})^T \nabla \delta {\bf y}_h^{n+1} + (\nabla \delta {\bf y}_h^{n+1})^T \nabla \delta {\bf y}_h^{n+1},
	\end{equation*}
	and from the discrete Sobolev inequality (Lemma \ref{lemma:disc_sobolev}), we get
	\begin{equation*} \label{eq:W_bound}
		||W_h^n||_{L^2(\Omega)}^2 \lesssim \big(||\nabla {\bf y}_h^n||_{L^4(\Omega)}^2+||\delta {\bf y}_h^{n+1}||_{H_h^2(\Omega)}^2 \big) ||\delta {\bf y}_h^{n+1}||_{H_h^2(\Omega)}^2.
	\end{equation*}
	The stability estimate from Lemma~\ref{lemma:solvability_gf} implies 
	\begin{equation*}\label{eq:H2_bound}
		||\delta {\bf y}_h^{n+1}||_{H_h^2(\Omega)}^2 \lesssim \tau \Big(\tau E_h^S({\bf y}_h^n)||\nabla {\bf y}_h^n||_{L^4(\Omega)}^2+\theta^2 E_h^B({\bf y}_h^n)\Big)
	\end{equation*}
	and so
	\begin{equation} \label{eq:W_bound_bis}
		||W_h^n||_{L^2(\Omega)}^2 \lesssim \big((1+\tau^2 E_h^S({\bf y}_h^n) )||\nabla {\bf y}_h^n||_{L^4(\Omega)}^2+ \tau \theta^2 E_h^B({\bf y}_h^n) \big) ||\delta {\bf y}_h^{n+1}||_{H_h^2(\Omega)}^2.
	\end{equation}
	
	To estimate $||\nabla {\bf y}_h^n||_{L^4(\Omega)}$ in the above expression, we recall that the gradient estimate \eqref{eqn:bound_grad_ES} provides a control on $||\nabla {\bf y}_h^n||_{L^2(\Omega)}$. Whence, together with an inverse inequality we have 
	\begin{equation} \label{eq:L4_bound}
		||\nabla {\bf y}_h^n||_{L^4(\Omega)}^2 \lesssim h_{\min}^{-1} ||\nabla {\bf y}_h^n||_{L^2(\Omega)}^2 \lesssim h_{\min}^{-1} \left(1+E_h^S({\bf y}_h^n)^{1/2}\right).
	\end{equation}
	
	Plugging (\ref{eq:L4_bound}) into (\ref{eq:W_bound_bis}), we get
	\begin{equation*}\label{eq:W_bound2}
		||W_h^n||_{L^2(\Omega)}^2 \lesssim \left\lbrace (1+\tau^2 E_h^S({\bf y}_h^n))  \big(h_{\min}^{-1}(1+E_h^S({\bf y}_h^n)^{1/2})\big)+\tau \theta^2 E_h^B({\bf y}_h^n)\right\rbrace ||\delta {\bf y}_h^{n+1}||_{H_h^2(\Omega)}^2.
	\end{equation*}
	To finish the estimate for $R_h^n$, it remains to bound $a_h^S({\bf y}_h^n; \delta {\bf y}_h^n, \delta {\bf y}_h^n)$. To this end, we apply the Cauchy-Schwarz inequality and the discrete Sobolev inequality (Lemma \ref{lemma:disc_sobolev}) to obtain
	\begin{equation*} 
		|a_h^S({\bf y}_h^n; \delta {\bf y}_h^{n+1}, \delta {\bf y}_h^{n+1})| \lesssim E_h^{S}({\bf y}_h^n)^{1/2}||\delta {\bf y}_h^{n+1}||^2_{H_h^2(\Omega)}.
	\end{equation*}
	Combining the last two estimates and recalling that $E_h({ \bf y}_{h}^n) = E_h^S({ \bf y}_{h}^n)+ \theta^2 E_h^B({ \bf y}_{h}^n)$ gives (\ref{eq:R_bound}). 
	
	\noindent
	{\em Step 3: Conditional energy decay.} Inserting \eqref{eq:R_bound} in \eqref{eq:energy_diff_bound} and rearranging the terms, we arrive at
	\begin{equation}
		E_h({\bf y}_h^{n+1})+\left(\frac 1 \tau-\frac{C}{2}d_{h, \tau} ({\bf y}_h^n)\right)||\delta {\bf y}_h^{n+1}||_{H_h^2(\Omega)}^2 \leq E_h({\bf y}_h^n),
	\end{equation}
	and so the energy decay property \eqref{eq:energy_decay} is proven as long as \eqref{eq:tau_cond2} holds.
\end{proof}

We now derive a uniform energy decay property by showing that the condition \eqref{eq:tau_cond2} holds uniformly in $n$.

\begin{cor} [Uniform energy decay] \label{cor:energy_decay}
	Let ${\bf y}_h^0 \in [S_h^k]^3$, and assume that 
	\begin{equation}\label{e:time_step_cond}
		\tau \leq \frac{1}{C d_{h, \tau}({\bf y}_h^0)},
	\end{equation}
	where $C$ is the constant in Proposition~\ref{prop:energy_decay}. The sequence of successive iterates ${\bf y}_h^{n+1}$, where ${\bf y}_h^{n+1}$ satisfies \eqref{eq:gf_original}, is well-defined. Furthermore, we have
	\begin{equation} \label{eq:uniform_est}
		E_h({\bf y}_h^{N+1}) + \frac{1}{2\tau} \sum_{n=0}^N ||\delta {\bf y}_h^{n+1}||_{H_h^2(\Omega)}^2 \leq E_h({\bf y}_h^0) \qquad  \forall N \geq 0.
	\end{equation}
\end{cor}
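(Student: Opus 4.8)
The plan is to prove Corollary~\ref{cor:energy_decay} by induction on $N$, using Proposition~\ref{prop:energy_decay} at each step, with the crucial observation that the map $\tau \mapsto d_{h,\tau}(\cdot)$ and the energy along the flow behave monotonically enough that a timestep condition checked only at $n=0$ suffices for all $n$. The key structural fact is that $d_{h,\tau}({\bf y}_h^n)$ is (for fixed $\tau$, $h$) an increasing function of $E_h({\bf y}_h^n)$; since Proposition~\ref{prop:energy_decay} gives $E_h({\bf y}_h^{n+1}) \le E_h({\bf y}_h^n)$ whenever the local condition holds, the energy stays bounded by $E_h({\bf y}_h^0)$ throughout, and hence $d_{h,\tau}({\bf y}_h^n) \le d_{h,\tau}({\bf y}_h^0)$ for all $n$ for which the iterates remain well-defined.

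More precisely, I would set up the induction hypothesis at stage $N$ as: the iterates ${\bf y}_h^0,\dots,{\bf y}_h^{N}$ are well-defined (each solving \eqref{eq:gf_original}, whose solvability is guaranteed by the coercivity \eqref{eq:coercivity}) and $E_h({\bf y}_h^{n}) \le E_h({\bf y}_h^{0})$ for $0 \le n \le N$. The base case $N=0$ is trivial. For the inductive step, since $E_h({\bf y}_h^{N}) \le E_h({\bf y}_h^{0})$ and $d_{h,\tau}$ is monotone in its energy argument (inspect \eqref{eq:d}: every occurrence of $E_h({\bf y}_h^n)$ appears with a nonnegative coefficient, so increasing the energy increases $d_{h,\tau}$), the assumed bound \eqref{e:time_step_cond}, namely $\tau \le 1/(C d_{h,\tau}({\bf y}_h^0))$, implies $\tau \le 1/(C d_{h,\tau}({\bf y}_h^{N}))$, which is exactly the hypothesis \eqref{eq:tau_cond2} of Proposition~\ref{prop:energy_decay} at step $n=N$. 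Applying that proposition yields the existence of ${\bf y}_h^{N+1}$ (well-definedness again coming from the coercive linear problem \eqref{eq:gf}) together with
\begin{equation*}
	E_h({\bf y}_h^{N+1}) + \frac{1}{2\tau}\|\delta {\bf y}_h^{N+1}\|_{H_h^2(\Omega)}^2 \le E_h({\bf y}_h^{N}) \le E_h({\bf y}_h^0),
\end{equation*}
which closes the induction and in particular gives $E_h({\bf y}_h^{N+1}) \le E_h({\bf y}_h^0)$.

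Finally, to obtain the telescoped estimate \eqref{eq:uniform_est}, I would sum the per-step inequality $E_h({\bf y}_h^{n+1}) + \frac{1}{2\tau}\|\delta {\bf y}_h^{n+1}\|_{H_h^2(\Omega)}^2 \le E_h({\bf y}_h^{n})$ over $n = 0,\dots,N$; the energy terms telescope, leaving $E_h({\bf y}_h^{N+1}) + \frac{1}{2\tau}\sum_{n=0}^{N}\|\delta {\bf y}_h^{n+1}\|_{H_h^2(\Omega)}^2 \le E_h({\bf y}_h^{0})$, as claimed. The only delicate point — the ``main obstacle'' — is the monotonicity argument that lets a single timestep bound propagate: one must verify that $d_{h,\tau}$ as defined in \eqref{eq:d} is genuinely nondecreasing in $E_h({\bf y}_h^n)$ (which it is, termwise) and, slightly more carefully, that the dependence of the constraint on $\tau$ itself (since $\tau$ appears on both sides of \eqref{eq:tau_cond2}) does not create a circularity; here it does not, because once $\tau$ is fixed by \eqref{e:time_step_cond} the quantity $d_{h,\tau}({\bf y}_h^0)$ is a fixed number and the inequality $\tau \le 1/(C d_{h,\tau}({\bf y}_h^0))$ is simply a hypothesis we assume holds. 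Everything else is bookkeeping.
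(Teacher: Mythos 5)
Your proposal is correct and follows essentially the same argument as the paper: induction on $N$, using the monotonicity of $d_{h,\tau}$ in the (nonnegative) energy to propagate the timestep condition \eqref{e:time_step_cond} from step $0$ to step $N$, then invoking Proposition~\ref{prop:energy_decay} at each step. The only cosmetic difference is that the paper carries the summed estimate \eqref{eq:uniform_est} directly as the induction hypothesis, whereas you keep only the energy bound $E_h({\bf y}_h^n)\le E_h({\bf y}_h^0)$ and telescope the per-step inequalities at the end; the substance is identical.
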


\begin{proof}
	We show \eqref{eq:uniform_est}, along with the property $d_{h, \tau}({\bf y}_h^{N+1}) \leq d_{h,\tau}({\bf y}_h^{0})$, by induction on $N \geq 0$. 
	For $N=0$, $\tau \leq \frac{1}{Cd_{h, \tau}({\bf y}_h^0)}$ by assumption, so Proposition \ref{prop:energy_decay} guarantees that 
	$$
	E_h({\bf y}_h^1) + \frac{1}{2\tau} \| \delta {\bf y}_h^1 \|_{H^2_h(\Omega)}^2 \leq E_h({\bf y}_h^0).
	$$
	This is \eqref{eq:uniform_est} for $N=0$. Furthermore, the above inequality also implies that $E_h({\bf y}_h^1) \leq E_h({\bf y}_h^0)$, and thus $d_{h, \tau}({\bf y}_h^1) \leq d_{h, \tau}({\bf y}_h^0)$ in view of the definition \eqref{eq:d} of $d_{h, \tau}$.

	We now assume that \eqref{eq:uniform_est} holds for $N$; that is, we assume that
	\begin{equation} \label{eq:recursion_assumption}
		E_h({\bf y}_h^{N}) + \frac{1}{2\tau} \sum_{n=0}^{N-1} ||\delta {\bf y}_h^{n+1}||_{H_h^2(\Omega)}^2 \leq E_h({\bf y}_h^0),
	\end{equation}
	and also that $d_{h, \tau}({\bf y}_h^N) \leq d_{h, \tau}({\bf y}_h^0)$, and we prove the relations for $N+1$. Because $d_{h, \tau}({\bf y}_h^{N}) \leq d_{h, \tau}({\bf y}_h^0)$, we have $\tau \leq \frac 1{C d_{h, \tau}({\bf y}_h^{0})} \leq \frac 1{C d_{h, \tau}({\bf y}_h^{N})}$. Therefore, Proposition~\ref{prop:energy_decay} guarantees that 
	$$
	E_h({\bf y}_h^{N+1}) + \frac{1}{2\tau} \| \delta {\bf y}_h^{N+1} \|_{H^2_h(\Omega)}^2 \leq E_h({\bf y}_h^N).
	$$
	This, together with \eqref{eq:recursion_assumption}, yields \eqref{eq:uniform_est} for $N+1$.
	The relation \eqref{eq:uniform_est} implies that $E_h({\bf y}_h^{N+1}) \leq E_h({\bf y}_h^0)$, and thus
	$d_{h, \tau}({\bf y}_h^{N+1}) \leq d_{h, \tau}({\bf y}_h^0)$.
\end{proof}

\begin{remark}[Gradient flow timestep condition]
	The condition on the gradient flow timestep \eqref{e:time_step_cond} depends on the mesh size $h_{\min}$ but is rather mild. In fact, for $0<\tau\leq 1$, it reads
	$$
	\tau \leq C h_{\min},
	$$
	where $C$ is a constant only depending on $E_h({\bf y}_h^0)$.
\end{remark}

The final result of this section shows that the iterates obtained via (\ref{eq:gf_original}) converge to a critical point of $E_h$. In order to prove it, we require the following conservation of averages result, whose proof is identical to that of Proposition 5.3 in \cite{Prestrain_theoretical_BGNY}.

\begin{lemma} [Conservation of averages] \label{lemma:avg_cons}
	Let ${\bf y}_h^0 \in [S_h^k]^3$. Then all the iterates ${\bf y}_h^n$, $n \geq 1$, of the gradient flow \eqref{eq:gf_original} satisfy 
	$$\int_{\Omega} {\bf y}_h^n = \int_{\Omega} {\bf y}_h^0.$$
\end{lemma}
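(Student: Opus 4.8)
The plan is to test the gradient flow relation \eqref{eq:gf_original} with a cleverly chosen direction ${\bf v}_h$ and show that each term vanishes, forcing $\int_\Omega \delta {\bf y}_h^{n+1} = 0$. The natural candidate is a constant vector field: for a fixed constant vector ${\bf c} \in \mathbb{R}^3$, take ${\bf v}_h$ to be the (componentwise) constant function equal to ${\bf c}$, which lies in $[S_h^k]^3$ since $S_h^k$ contains constants. The key observation is that this direction annihilates both $a_h^B$ and $a_h^S$: indeed $\nabla {\bf v}_h = 0$ and $H_h({\bf v}_h) = D_h^2 {\bf v}_h - R_h([\nabla {\bf v}_h]) = 0$ and $[\nabla {\bf v}_h] = 0$ on $\Gamma_h^0 \setminus \Sigma$, so every integrand defining $a_h^B(\cdot,{\bf v}_h)$ and $a_h^S({\bf y}_h^n;\cdot,{\bf v}_h)$ contains a factor that is identically zero.

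The steps, in order, are as follows. First I would note that for ${\bf v}_h \equiv {\bf c}$ constant, the term $({\bf v}_h,{\bf w}_h)_{H_h^2(\Omega)}$ reduces to $({\bf v}_h,{\bf w}_h)_{L^2(\Omega)}$ because $D_h^2 {\bf v}_h = 0$ and $[\nabla {\bf v}_h] = 0$; hence $(\delta {\bf y}_h^{n+1}, {\bf v}_h)_{H_h^2(\Omega)} = \int_\Omega \delta {\bf y}_h^{n+1} \cdot {\bf c}$. Second, I would observe that $a_h^B(\delta {\bf y}_h^{n+1}, {\bf v}_h) = 0$ and $a_h^S({\bf y}_h^n, {\bf v}_h) = 0$ by the vanishing argument above. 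Plugging these into \eqref{eq:gf_original} gives $\tau^{-1} \int_\Omega \delta {\bf y}_h^{n+1} \cdot {\bf c} = 0$ for every ${\bf c} \in \mathbb{R}^3$, hence $\int_\Omega \delta {\bf y}_h^{n+1} = 0$, i.e. $\int_\Omega {\bf y}_h^{n+1} = \int_\Omega {\bf y}_h^n$. Third, I would conclude by induction on $n$ that $\int_\Omega {\bf y}_h^n = \int_\Omega {\bf y}_h^0$ for all $n \geq 1$.

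There is essentially no obstacle here: the only point requiring care is confirming that the constant function genuinely belongs to $[S_h^k]^3$ in the isoparametric setting (it does, since constants are preserved under the pullback $\hat v \circ \psi_T^{-1}$), and that the bilinear forms $a_h^B$, $a_h^S$ were defined so that a zero gradient / zero discrete Hessian argument kills them — which is immediate from their explicit formulas. Since this is exactly the structure exploited in Proposition 5.3 of \cite{Prestrain_theoretical_BGNY}, the argument transfers verbatim, with the only bookkeeping difference being the presence of the crease $\Sigma$, which is harmless because the jump terms are taken over $\Gamma_h^0 \setminus \Sigma$ and still vanish for constant fields.
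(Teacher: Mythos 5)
Your proof is correct and is exactly the argument the paper relies on: the paper omits the proof and cites Proposition 5.3 of \cite{Prestrain_theoretical_BGNY}, whose argument is precisely this test of \eqref{eq:gf_original} with a constant ${\bf v}_h \equiv {\bf c} \in [S_h^k]^3$, for which the $H_h^2$-inner product collapses to the $L^2$ one and $a_h^B$, $a_h^S$ vanish since $\nabla {\bf v}_h = 0$, $[\nabla {\bf v}_h]=0$, $H_h({\bf v}_h)=0$. Your remarks on constants surviving the isoparametric pullback and on the crease being harmless are the right bookkeeping points; nothing is missing.
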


Now we can show that the iterates of \eqref{eq:gf_original} converge.

\begin{prop} [Limit of the gradient flow]
	Fix $h>0$ and let ${\bf y}_h^0 \in [S_h^k]^3$. Also assume that $E({\bf y}_h^0) \lesssim 1$, and that $\strokedint {\bf y}_h^0 = 0$. Let $\{ {\bf y}_h^n \}_{n}$ be the sequence produced by the discrete gradient flow (\ref{eq:gf_original}).  Then there exists ${\bf y}_h^{\infty}$ such that (up to a subsequence) ${\bf y}_h^n \to {\bf y}_h^{\infty}$ as $n \to \infty$ and ${\bf y}_h^{\infty}$ is a critical point of $E_h$.
\end{prop}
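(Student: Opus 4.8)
The plan is to exploit the uniform energy decay from Corollary~\ref{cor:energy_decay} together with the conservation of averages (Lemma~\ref{lemma:avg_cons}) to extract a convergent subsequence, and then pass to the limit in the gradient flow equation \eqref{eq:gf_original}. First I would note that since $E_h({\bf y}_h^0)\lesssim1$ and $\strokedint_\Omega {\bf y}_h^0 = 0$, the timestep condition \eqref{e:time_step_cond} can be met (the constant $C$ in it depends only on $E_h({\bf y}_h^0)$ and on $h$, which is fixed here), so the iterates are well-defined and \eqref{eq:uniform_est} holds for all $N$. From \eqref{eq:uniform_est} we get both a uniform bound $E_h({\bf y}_h^n)\le E_h({\bf y}_h^0)$ for all $n$ and the summability $\sum_{n\ge0}\|\delta{\bf y}_h^{n+1}\|_{H_h^2(\Omega)}^2 \le 2\tau E_h({\bf y}_h^0) < \infty$, whence $\|\delta{\bf y}_h^{n+1}\|_{H_h^2(\Omega)}\to 0$ as $n\to\infty$.

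Next I would establish boundedness of the full sequence $\{{\bf y}_h^n\}_n$ in $[S_h^k]^3$. By the partial coercivity \eqref{eq:partial_coercivity} of $E_h$, the bound $E_h({\bf y}_h^n)\le E_h({\bf y}_h^0)+C$ controls $\|\nabla{\bf y}_h^n\|_{L^2(\Omega)}$ and $|{\bf y}_h^n|_{H_h^2(\Omega)}$ uniformly in $n$; combined with the conservation of averages $\strokedint_\Omega {\bf y}_h^n = \strokedint_\Omega {\bf y}_h^0 = 0$ and the Poincaré--Friedrichs inequality, this gives a uniform bound on $\|{\bf y}_h^n\|_{H_h^2(\Omega)}$, hence on $\|{\bf y}_h^n\|_{[S_h^k]^3}$ (all norms on the finite-dimensional space $[S_h^k]^3$ are equivalent for fixed $h$). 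By Bolzano--Weierstrass in the finite-dimensional space $[S_h^k]^3$, there is a subsequence ${\bf y}_h^{n_j}\to {\bf y}_h^\infty$ in $[S_h^k]^3$.

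To see that ${\bf y}_h^\infty$ is a critical point of $E_h$, I would pass to the limit along this subsequence in \eqref{eq:gf_original}. The first term $\tau^{-1}(\delta{\bf y}_h^{n+1},{\bf v}_h)_{H_h^2(\Omega)}$ tends to zero because $\|\delta{\bf y}_h^{n_j+1}\|_{H_h^2(\Omega)}\to 0$ (here one uses that $\delta{\bf y}_h^{n+1}\to 0$ along the \emph{whole} sequence, so in particular along the subsequence $n_j$, and a Cauchy--Schwarz bound by $\tau^{-1}\|\delta{\bf y}_h^{n_j+1}\|_{H_h^2(\Omega)}\|{\bf v}_h\|_{H_h^2(\Omega)}$). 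For the remaining terms, $a_h^B(\cdot,{\bf v}_h)$ is linear and continuous on the finite-dimensional space, so $a_h^B({\bf y}_h^{n_j+1},{\bf v}_h)\to a_h^B({\bf y}_h^\infty,{\bf v}_h)$; here I also use ${\bf y}_h^{n_j+1} = {\bf y}_h^{n_j}+\delta{\bf y}_h^{n_j+1}\to {\bf y}_h^\infty$. The stretching term $a_h^S({\bf y}_h^{n_j};{\bf y}_h^{n_j},{\bf v}_h)$ is a polynomial (quadratic--cubic) expression in ${\bf y}_h^{n_j}$ integrated against fixed data, hence continuous, so it converges to $a_h^S({\bf y}_h^\infty;{\bf y}_h^\infty,{\bf v}_h)$. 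Therefore, in the limit,
\begin{equation*}
	a_h^S({\bf y}_h^\infty;{\bf y}_h^\infty,{\bf v}_h) + \theta^2 a_h^B({\bf y}_h^\infty,{\bf v}_h) = 0 \qquad \forall\, {\bf v}_h \in [S_h^k]^3,
\end{equation*}
which is precisely the statement that the first variation of $E_h$ at ${\bf y}_h^\infty$ vanishes, i.e.\ ${\bf y}_h^\infty$ is a critical point of $E_h$.

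The main obstacle is the bookkeeping needed to guarantee that the timestep condition \eqref{e:time_step_cond} and all of Corollary~\ref{cor:energy_decay} genuinely apply here: one must check that $E_h({\bf y}_h^0)$ is finite (which follows from $E({\bf y}_h^0)\lesssim1$ together with the equivalence \eqref{eq:H2_equiv} relating $E_h^B$ to $|\cdot|_{H_h^2(\Omega)}$, since for a fixed discrete function the discrete and broken Hessians differ only by a lifting term that is itself controlled), and that $d_{h,\tau}({\bf y}_h^0)$ is a finite quantity so that a valid $\tau$ exists. Once the hypotheses of Corollary~\ref{cor:energy_decay} are in force, the rest is a soft compactness-plus-continuity argument that is routine because $h$ is fixed and the space is finite-dimensional; no quantitative estimates beyond those already proven are required.
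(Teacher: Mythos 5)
Your proposal is correct and follows essentially the same route as the paper: uniform energy decay (Corollary~\ref{cor:energy_decay}) plus conservation of averages and the partial coercivity of $E_h$ give boundedness, finite-dimensionality of $[S_h^k]^3$ yields a strongly convergent subsequence, and one passes to the limit in \eqref{eq:gf_original} using $\|\delta{\bf y}_h^{n+1}\|_{H_h^2(\Omega)}\to 0$ to kill the time-difference term. The only cosmetic difference is that you justify convergence of the stretching form by a soft finite-dimensional continuity argument for a polynomial map, whereas the paper verifies it with explicit Cauchy--Schwarz and discrete Sobolev estimates (its terms $T_1$--$T_4$); both are valid since $h$ is fixed.
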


\begin{proof}
	From Corollary~\ref{cor:energy_decay}, the sequence $\{ E_h({\bf y}_h^n) \}_{n}$ is bounded, and from Lemma \ref{lemma:avg_cons}, $\int_{\Omega} {\bf y}_h^n = \int_{\Omega} {\bf y}_h^0 = 0$. Combining the Poincar{\'e}-Friedrichs inequality \eqref{eq:disc_poincare_1} from Lemma \ref{lemma:disc_poincare} with Theorem \ref{thm:coercivity_prestrain}, we conclude that $||{\bf y}_h^n||_{L^2(\Omega)}$ is bounded. Thus, since $[S_h^k]^3$ is finite-dimensional, $\{ {\bf y}_h^n \}_{n}$ converges strongly (up to a subsequence not relabeled) in $[L^2(\Omega)]^3$ to some ${\bf y}_h^{\infty} \in [S_h^k]^3$ as $n\rightarrow\infty$. The fact that $[S_h^k]^3$ is of finite dimension implies that we in fact have ${\bf y}_h^n \to {\bf y}_h^{\infty}$ in any norm on $[S_h^k]^3$; in particular, ${\bf y}_h^n \to {\bf y}_h^{\infty}$ in $[H_h^2(\Omega)]^3$.
	
	Provided that
	\begin{equation} \label{eq:bending_limit}
		\lim_{n \to \infty} a_h^B({\bf y}_h^n, {\bf v}_h) = a_h^B({\bf y}_h^{\infty}, {\bf v}_h)
	\end{equation}
	and 
	\begin{equation} \label{eq:stretching_limit}
		\lim_{n \to \infty} a_h^S({\bf y}_h^n; {\bf y}_h^n, {\bf v}_h) = a_h^S({\bf y}_h^{\infty}; {\bf y}_h^{\infty}, {\bf v}_h) 
	\end{equation}
	for all ${\bf v}_h \in [S_h^k]^3$,
	taking the limit as $n \to \infty$ of (\ref{eq:gf_original}) gives 
	\begin{equation}
		0 = a_h^S({\bf y}_h^{\infty}; {\bf y}_h^{\infty}, {\bf v}_h) + \theta^2 a_h^B({\bf y}_h^{\infty},{\bf v}_h).
	\end{equation}
	But 
	\begin{equation}
		a_h^S({\bf y}_h^{\infty}; {\bf y}_h^{\infty}, {\bf v}_h) + \theta^2 a_h^B({\bf y}_h^{\infty},{\bf v}_h) = \delta E({\bf y}_h^{\infty})({\bf v}_h) = 0,
	\end{equation}
	which implies that ${\bf y}_h^{\infty}$ is a critical point as desired.
	
	Now we establish (\ref{eq:bending_limit}) and (\ref{eq:stretching_limit}). The equation (\ref{eq:bending_limit}) can be readily shown using the Cauchy-Schwarz inequality and the fact that $||{\bf y}_h^n - {\bf y}_h^{\infty} ||_{H_h^2(\Omega)} \to 0$ as $n \to \infty$. To prove (\ref{eq:stretching_limit}), we rewrite 
	\begin{multline*}
		|a_h^S({\bf y}_h^n; {\bf y}_h^n, {\bf v}_h)-a_h^S({\bf y}_h^{\infty}; {\bf y}_h^{\infty}, {\bf v}_h)| \lesssim \\
		\Big| \int_{\Omega} ({\bf g}^{-1/2}(\nabla {\bf v}_h^T \nabla ({\bf y}_h^n-{\bf y}_h^{\infty}) + \nabla ({\bf y}_h^n -{\bf y}_h^{\infty})^T \nabla {\bf v}_h ){\bf g}^{-1/2}):({\bf g}^{-1/2}((\nabla {\bf y}_h^{\infty})^T\nabla {\bf y}_h^{\infty} -{\bf g}){\bf g}^{-1/2})\Big| \\ 
		+ \Big| \int_{\Omega} ({\bf g}^{-1/2}(\nabla {\bf v}_h^T \nabla {\bf y}_h^n + (\nabla {\bf y}_h^n)^T \nabla {\bf v}_h){\bf g}^{-1/2}):({\bf g}^{-1/2}((\nabla {\bf y}_h^n)^T \nabla {\bf y}_h^n-(\nabla {\bf y}_h^{\infty})^T \nabla {\bf y}_h^{\infty}){\bf g}^{-1/2}) \Big| \\
		\\ + \Big| \int_{\Omega} \text{tr}({\bf g}^{-1/2}(\nabla {\bf v}_h^T \nabla ({\bf y}_h^n-{\bf y}_h^{\infty}) + \nabla ({\bf y}_h^n -{\bf y}_h^{\infty})^T \nabla {\bf v}_h ){\bf g}^{-1/2})\text{tr}({\bf g}^{1/2}((\nabla {\bf y}_h^{\infty})^T\nabla {\bf y}_h^{\infty}-{\bf g}){\bf g}^{-1/2})\Big| \\ 
		+ \Big| \int_{\Omega} \text{tr}({\bf g}^{-1/2}(\nabla {\bf v}_h^T \nabla {\bf y}_h^n + (\nabla {\bf y}_h^n)^T \nabla {\bf v}_h){\bf g}^{-1/2})\text{tr}({\bf g}^{-1/2}((\nabla {\bf y}_h^n)^T \nabla {\bf y}_h^n-(\nabla {\bf y}_h^{\infty})^T \nabla {\bf y}_h^{\infty}){\bf g}^{-1/2}) \Big| \\
		=: |T_1| + |T_2| + |T_3| + |T_4|
	\end{multline*}
	
	The leading constants are not included because they do not affect the computation. 
	
	We focus on the first two terms, $T_1$ and $T_2$, because the reasoning for the trace terms is similar. For $T_1$, 
	Cauchy-Schwarz, the definition of $E_h^{S}$, and Lemma \ref{lemma:disc_sobolev} give 
	\begin{align*}
		|T_1|
		&\lesssim ||\nabla {\bf v}_h||_{L^4(\Omega)} ||\nabla ({\bf y}_h^n-{\bf y}_h^{\infty})||_{L^4(\Omega)} E_h^{S}({\bf y}_h^{\infty})^{1/2}\\
		&\lesssim ||{\bf v}_h||_{H_h^2(\Omega)} ||{\bf y}_h^n-{\bf y}_h^{\infty}||_{H_h^2(\Omega)} E_h^{S}({\bf y}_h^{\infty})^{1/2}.
	\end{align*}
	But $||{\bf y}_h^n-{\bf y}_h^{\infty}||_{H_h^2(\Omega)} \to 0$, and $E_h^{S}({\bf y}_h^{\infty})$ is finite by Corollary \ref{cor:energy_decay}. So $|T_1| \to 0$ as $n \to \infty$.
	
	Similarly, for $T_2$,
	\begin{align*}
		|T_2|
		&\lesssim||\nabla {\bf v}_h||_{L^4(\Omega)} || \nabla {\bf y}_h^n||_{L^4(\Omega)} ||(\nabla {\bf y}_h^{\infty})^T \nabla {\bf y}_h^{\infty} - (\nabla {\bf y}_h^n)^T \nabla {\bf y}_h^n||_{L^2(\Omega)} \\
		&\lesssim ||{\bf v}_h||_{H_h^2(\Omega)} ||{\bf y}_h^n||_{H_h^2(\Omega)} ||(\nabla {\bf y}_h^{\infty})^T \nabla {\bf y}_h^{\infty} - (\nabla {\bf y}_h^n)^T \nabla {\bf y}_h^n||_{L^2(\Omega)}.
	\end{align*}
	Since $\{{\bf y}_h^n\}_{n}$ converges in $H_h^2(\Omega)$, $||{\bf y}_h^n||_{H_h^2}$ is bounded, and using the relation \eqref{eq:identity_matrices} we obtain
	\begin{align*}
		||(\nabla {\bf y}_h^{\infty})^T \nabla {\bf y}_h^{\infty} - (\nabla {\bf y}_h^n)^T \nabla {\bf y}_h^n||_{L^2(\Omega)} &\leq ||\nabla({\bf y}_h^n - {\bf y}_h^{\infty})^T \nabla({\bf y}_h^n - {\bf y}_h^{\infty})||_{L^2(\Omega)} \\ 
		&+||\nabla ({\bf y}_h^n - {\bf y}_h^{\infty})^T\nabla {\bf y}_h^{\infty}||_{L^2(\Omega)} + ||(\nabla {\bf y}_h^{\infty})^T \nabla ({\bf y}_h^n - {\bf y}_h^{\infty})||_{L^2(\Omega)} \\
		&\leq ||\nabla({\bf y}_h^n-{\bf y}_h^{\infty})||_{L^4(\Omega)}^2 + 2||\nabla {\bf y}_h^{\infty}||_{L^4(\Omega)}||\nabla({\bf y}_h^n - {\bf y}_h^{\infty})||_{L^4(\Omega)} \\
		&\lesssim ||{\bf y}_h^n-{\bf y}_h^{\infty}||_{H_h^2(\Omega)}^2 + 2||{\bf y}_h^{\infty}||_{H_h^2(\Omega)}||{\bf y}_h^n - {\bf y}_h^{\infty}||_{H_h^2(\Omega)} \to 0
	\end{align*}
	as $n \to \infty$. Thus $|T_2| \to 0$ as $n \to \infty$. Applying similar reasoning to the trace terms, we obtain (\ref{eq:stretching_limit}) and complete the proof.
\end{proof}

\section{Numerical Experiments}

We illustrate the convergence results from the previous section, along with the capabilities of the preasymptotic model, via numerical experiments. Unless otherwise specified, all experiments are run with $\mu=6$, $\lambda=8$, and pseudo-timestep $\tau=10^{-2}$. The polynomial degree for the deformation and the lifting is set to $k=2$, and we take the stabilization parameter $\gamma=1$. The gradient flow terminates when 
$$\tau^{-1} |E_h({\bf y}_h^{n+1})-E_h({\bf y}_h^n)| \leq tol,$$
where $tol$ will be set in each of the experiments. Finally, although the analysis above deals with the case $\theta>0$, we will also consider the case $\theta=0$ in the experiment of Section~\ref{sec:oscillation}, namely the minimization of the stretching energy only. Similar results as provided by Lemma~\ref{lemma:solvability_gf} and Proposition~\ref{prop:energy_decay} can be derived for $\theta=0$ following the arguments provided in \cite[Section 6]{Prestrain_theoretical_BGNY} (treating the case $\sigma_h=0$).

Note that, because our energy is nonconvex, we cannot guarantee that the deformations found by our gradient flow are global or even local minimizers. 
However, in some cases, such as the bubble experiment below, we have reason to believe that the deformation found by the gradient flow is a minimizer. We use this case to demonstrate the convergence results established in Section 4. In addition, it is difficult to design input data for which we can find an exact minimizer of (\ref{eq:energy_final}). Instead, we treat the solution of largest refinement as the exact solution when computing errors.  

\subsection{Disc with ``Bubble" metric}
We first consider the unit disc with the following metric, which corresponds to a bubble shape with positive Gaussian curvature: 
\begin{equation} \label{eqn:tensor_bubble}
	{\bf g}(x_1,x_2) = \left(\begin{array}{cc}
		1+\alpha \frac{\pi^2}{4}\cos(\frac{\pi}{2}(1-r))^2 \frac{x_1^2}{r^2} & \alpha \frac{\pi^2}{4}\cos(\frac{\pi}{2}(1-r))^2 \frac{x_1 x_2}{r^2} \\ \alpha\frac{\pi^2}{4}\cos(\frac{\pi}{2}(1-r))^2 \frac{x_1 x_2}{r^2} & 1+\alpha \frac{\pi^2}{4}\cos(\frac{\pi}{2}(1-r))^2 \frac{x_2^2}{r^2}
	\end{array}\right).
\end{equation}
In the expression above, $r = \sqrt{x_1^2 + x_2^2}$ and $\alpha = 0.2$. A compatible deformation for this metric is given by
\begin{equation}\label{eq:compatible_def}
	{\bf y}(x_1, x_2)= \Big(x_1, x_2, \sqrt{\alpha}\sin\big(\frac{\pi}{2}(1-r)\big)\Big)^T.
\end{equation}

Note that this deformation is a minimizer of (\ref{eq:energy_final}) in the case that $\theta=0$. Thus, we expect discrete minimizers to have similar shape when $\theta$ is small. Indeed, we find this to be true experimentally, and if $\theta$ is large, our algorithm converges to the flat configuration, which is also a local minimizer. No other deformations have been found by experiments. Thus, we have some confidence that the configuration reached by this experiment is an approximate minimizer. 

We set $tol=10^{-9}$ for the stopping criterion and let $\theta^2=10^{-5}$. The initial deformation is the continuous Lagrange interpolant of the shallow paraboloid $-\frac{x_1^2}{16}-\frac{x_2^2}{16}$.  This gives a slight initial bending that allows the simulation to find a non-flat minimizer. (The flat configuration is a local minimizer of the preasymptotic energy, so that a simulation that starts flat stays flat.) 

To demonstrate convergence of minimizers as $h \to 0^+$, we run the simulation on grids of increasing refinement. The grid of maximum refinement has $61,827$ degrees of freedom (5,120 quadrilaterals), and we take the reference solution ${\bf y}_h^{\text{ref}}$ to be the solution on this mesh. In Table \ref{table:bubble_energy_conv}, we record the energy of the final deformation for the different mesh sizes. The final energy decreases as the mesh is refined, with a value of (approximately) 3.9e-05 for the finest mesh, whereas the energy of the interpolant of the  compatible deformation \eqref{eq:compatible_def}, which is itself an almost-minimizer, on a grid of $246,531$ degrees of freedom (20,480 quadrilaterals) is 2.858e-05. Running more experiments with higher refinement should give us energies closer to this minimum; however the time costs of such experiments are currently prohibitive.

In Table \ref{table:bubble_sol_conv}, we show the convergence of the coarse solutions ${\bf y}_h^N$ to the reference solution ${\bf y}_h^{\text{ref}}$. Errors are computed in the $L^2$, $H^1$, and $L^{\infty}$ norm.  Note that to save memory in our simulations, we only write solutions every thousand timesteps. Thus, the iterate at which errors are computed is rounded up to the nearest thousand from the iterate at which the tolerance is reached. For the large number of time steps required in this experiment, the effect on our results is negligible.

We observe a decrease of errors to zero in all three norms as the number of degrees of freedom increases. In particular, the convergence of minimizers in the $L^2$ norm agrees with our result in Section 4. Note that the theory does not predict a rate of convergence; it only ensures that the error converges to $0$. By running more simulations with greater refinement, we may be able to observe a convergence rate experimentally.
\begin{center}
	\begin{table}[htbp!]
		\begin{tabular}{ |c|c| } 
			\hline
			Number of DoFs & Final Energy \\
			\hline
			1011 & 5.082e-05 \\
			\hline
			3939 &  4.294e-05 \\ 
			\hline
			15555 & 4.089e-05 \\
			\hline
			61827 & 3.936e-05 \\
			\hline
		\end{tabular}
		\caption{Energy $E_h$ at the final timestep of the gradient flow for increasing refinement levels of the mesh.}\label{table:bubble_energy_conv}
	\end{table}
\end{center}

\begin{center}
	\begin{table}[htbp!]
		\begin{tabular}{ |c|c|c|c| } 
			\hline
			Number of DoFs & $||{\bf y}_h^N - {\bf y}_h^{\text{ref}}||_{L^2(\Omega)}$ & $||{\bf y}_h^N - {\bf y}_h^{\text{ref}}||_{H^1(\Omega)}$ &  $||{\bf y}_h^N - {\bf y}_h^{\text{ref}}||_{L^{\infty}(\Omega)}$\\
			\hline
			1011 & 1.111e-03 & 2.138e-02  & 1.570e-03 \\
			\hline
			3939 & 2.856e-04 & 1.652e-02 & 3.867e-04 \\ 
			\hline
			15555 & 1.193e-04 & 7.268e-03  & 2.044e-04 \\
			\hline 
		\end{tabular}
		\caption{Errors between the final solution at each refinement level and the final solution on the reference mesh.}\label{table:bubble_sol_conv}
	\end{table}
\end{center}

To conclude this experiment, we comment on the effect of replacing the second fundamental form in the physically correct bending energy \eqref{eqn:EbendII} by the Hessian to get \eqref{eqn:Ebend}. This modification, also considered for instance in \cite{CM2008,Wrinkling_BK_2014}, greatly simplifies the design of a numerical scheme since the variational derivative of $E^B(\mathbf{y})$ is linear. It can be rigorously justified in specific cases, e.g. when $E^S(\mathbf{y})=0$ as shown in \cite[Proposition A.1]{Prestrain_theoretical_BGNY} (which is not limited to small deformations), and we expect this simplification to remain \emph{valid} for deformation with small stretching energy. To illustrate the effect numerically, we give in Figure~\ref{fig:comparison_II_D2y} the element-wise bending energy of the final deformation in the case $\mu=6$, $\lambda=0$ and a mesh with 3 refinements. We observe that the distribution of the local bending energies with the (discrete) second fundamental form and Hessian are similar. Note that this is not surprising for low energy deformations for which the stretching energy is expected to be small due to the different scaling $1$ versus $\theta^2$ for the two parts of the energy \cite{BKN2013}.

\begin{figure}[htbp!]
	\begin{center}
		\includegraphics[width=4.00cm]{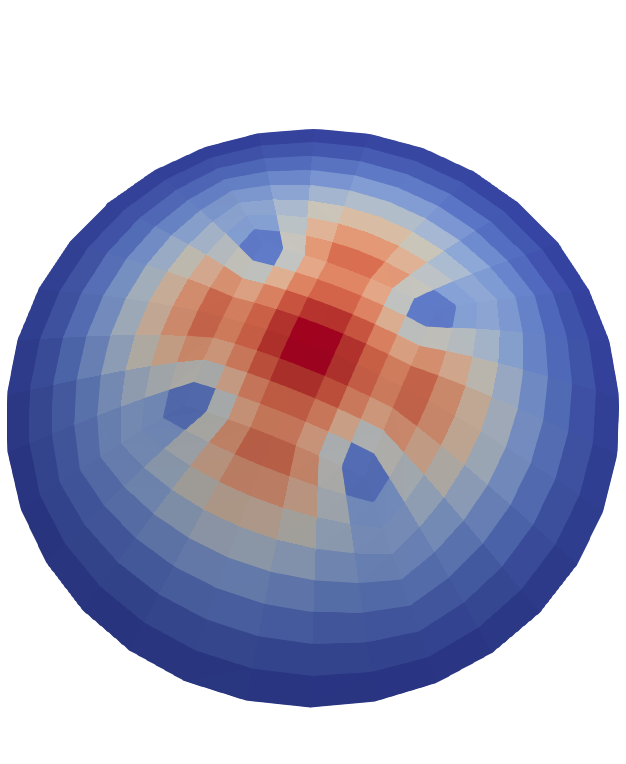}
		\hspace{0.8cm}
		\includegraphics[width=4.00cm]{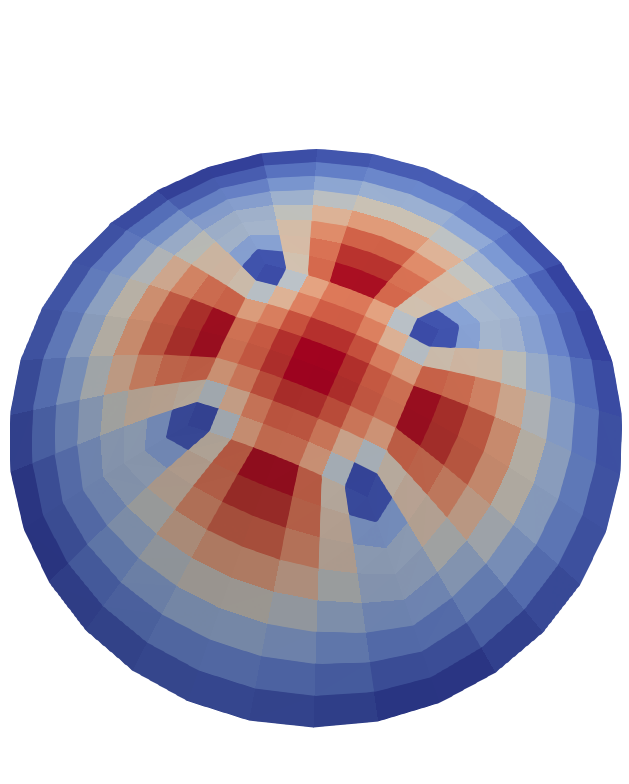}
	\end{center}
	\caption{Element-wise bending energy of the final deformation using the second fundamental form (left) and the Hessian (right). The same color map is used for both plot and the ranges of values are [2.53e-8,6.82e-8] and [3.69e-8,6.84e-8], respectively.}
	\label{fig:comparison_II_D2y}
\end{figure}

\subsection{Disc with Oscillating Boundary} \label{sec:oscillation}
The next experiment is inspired by \cite{KVS_2011,Review_LM_2021}, in which a hydrogel disc of negative Gaussian curvature is observed to develop more oscillations along the boundary as its thickness is reduced. It is included to demonstrate the capabilities of the preasymptotic model. A similar experiment involving discontinuous elements can be found in \cite{BGM_2022}. 

The material is prestrained according to the metric
$${\bf g}={\bf J}^T(\widetilde {\bf g} \circ \boldsymbol{\zeta}){\bf J},$$
where ${\bf J}$ is the Jacobian matrix for the change of variables $(r,\phi)=\boldsymbol{\zeta}(x_1,x_2)$ from Cartesian to polar coordinates, $\widetilde {\bf g}(r, \phi)$ is the first fundamental form of $\widetilde {\bf y}(\widetilde\Omega)$, where $\widetilde\Omega=\boldsymbol{\zeta}(\Omega)$, and
\begin{equation} \label{eqn:target_oscillating}
	\widetilde {\bf y}(r,\phi) = (r\cos(\phi), r\sin(\phi), 0.2r^4\sin(6\phi)).
\end{equation}
This deformation corresponds to a disk with six wrinkles.

For the discretization, our subdivision consists of $1,280$ quadrilaterals and a total of $15,555$ degrees of freedom. The initial deformation ${\bf y}_h^0$ is taken to be the continuous Lagrange interpolant of $\widetilde {\bf y}\circ \boldsymbol{\zeta}$ with $\widetilde {\bf y}$ given by (\ref{eqn:target_oscillating}). We set $tol=10^{-10}$ for the stopping criterion and run the experiment for $\theta^2 = 10^{-1}$, $10^{-2}$, $10^{-3}$, and $0$.

Because the gradient descent algorithm is slow to converge, we use a Nesterov-type acceleration to speed up the computation. This algorithm is analogous to the FISTA (see \cite{BT09, CD15}) for minimizing $F=f+g$ with $f=E^S$ (explicit) and $g=E^B$ (implicit), although $E^S$ is not convex as assumed in the listed references. The algorithm proceeds as follows (see \cite{BGM_2022} for the semi-implicit scheme):
Given ${\bf y}_h^0\in[S_h^k]^3$, set ${\bf w}_h^0={\bf y}_h^0$. Then for $n=0,1,\ldots$ do 
\begin{enumerate}
	\item Find ${\bf y}_h^{n+1} \in [S_h^k]^3$ satisfying (\ref{eq:gf_original}) with  ${\bf y}_h^n$ replaced by ${\bf w}_h^n$. 
	\item Set ${\bf w}_h^{n+1}={\bf y}_h^{n+1}+\eta_{n+1}({\bf y}_h^{n+1}-{\bf y}_h^{n})$, where $\eta_{n+1}=\frac{t_{n+1}-1}{t_{n+2}}$ and $\{t_n\}_{n\ge 1}$ satisfies
	\begin{equation} \label{def:tn_acc}
		t_1=1 \quad \mbox{and} \quad  \quad t_{n+1} = \sqrt{t_n^2+\frac{1}{4}}+\frac{1}{2} \quad \mbox{for } n=0,1,\ldots. 
	\end{equation}
\end{enumerate}

Details about the FISTA can be found in \cite{BT09, BGM_2022, CD15}. See also \cite{Nesterov1983} for the original algorithm proposed by Nesterov. 

The results are given in Figure \ref{fig:results_oscillating} and match those in \cite{BGM_2022}. As observed in the experiments of \cite{KVS_2011,Review_LM_2021}, the number of oscillations increases as the thickness decreases.

\begin{figure}[htbp!]
	\begin{center}
		\includegraphics[width=4.00cm]{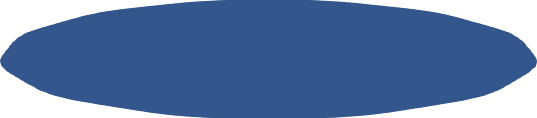}
		\hspace{1.5cm}
		\includegraphics[width=4.00cm]{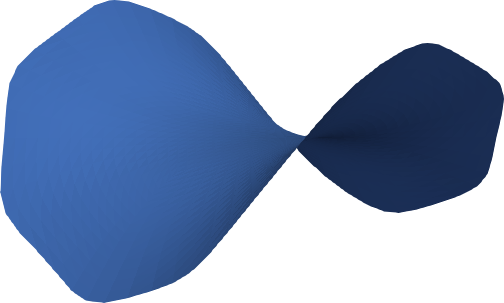} \\
		\vspace{1.3cm}
		\includegraphics[width=4.00cm]{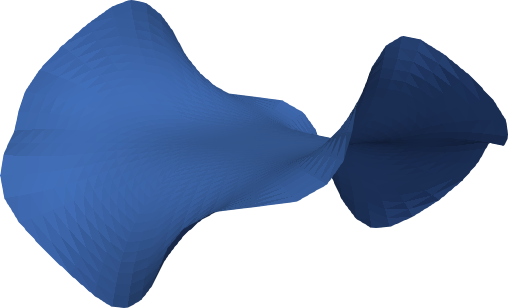}
		\hspace{1.5cm}
		\includegraphics[width=4.00cm]{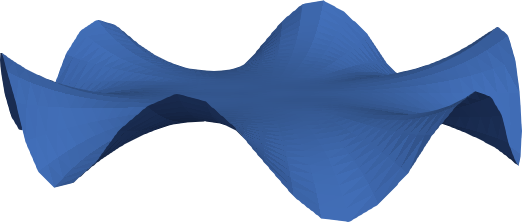}
	\end{center}
	\caption{Final configurations for the oscillating boundary experiment. Left to right and top to bottom: $\theta^2 = 10^{-1}$, $10^{-2}$, $10^{-3}$, and $0$.}
	\label{fig:results_oscillating}
\end{figure}

We note that the $\theta=10^{-3}$ case appears to be particularly sensitive to the minimization algorithm and the data of the experiment. For instance, if we linearize at ${\bf y}_h^n$ instead of ${\bf w}_h^n$ in the Nesterov-type algorithm, we obtain the result shown in Figure \ref{fig:results_oscillating_yn}. Starting from the almost-flat configuration from Section 6.1 instead of a disc with 6 oscillations produces a similar result. This configuration has a lower energy than that in Figure \ref{fig:results_oscillating}, and also matches more closely the results obtained in Figure 1 of \cite{KVS_2011}. Understanding the effect of various parameters on the final configuration is a topic for future work. 

\begin{figure}[htbp!]
	\begin{center}
		\includegraphics[width=4.00cm]{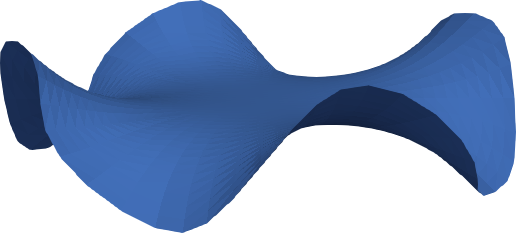}
	\end{center}
	\caption{Final configuration for the oscillating boundary experiment with $\theta = 10^{-3}$, where we use the linearization $a_h^S({\bf y}_h^n; \cdot, \cdot)$ instead of $a_h^S({\bf w}_h^n; \cdot, \cdot)$.}
	\label{fig:results_oscillating_yn}
\end{figure}

\subsection{Flapping Device}\label{s:flapping}
This experiment illustrates the advantages of the preasymptotic model when folding is present. Its aim is to simulate the flapping device discussed in the introduction.

Recall that in this experiment, the metric is ${\bf g} = {\bf I}_2$, namely the plate has no prestrain. Moreover, a time-dependent boundary condition is used to model the compression dynamic. More details are provided in \cite{BGM_2022}, but the implementation is summarized here. We introduce a physical timestep $\Delta t$ and consider the time points $t_m = m\Delta t$ for $m=1, \ldots, M$, where $M$ is a positive integer. We write $T=M\Delta t$ for the final time, and we use ${\bf y}_h^{(m)}:={\bf y}_h(t_m)$ to denote the output of the gradient flow \eqref{eq:gf_original} using boundary data at time $t_m$. Finally, given $t\in[0,T]$, let $\hat{\bf y}_h$ be the LDG approximation of the solution $\hat{\bf y}$ to the following bi-Laplacian problem used to incorporate the boundary conditions in the initial deformation:
\begin{equation} \label{eq:pb_BC}
	\left\{\begin{array}{rcll}
		\Delta^2\hat{\bf y} & = & \hat{\bf f} & \mbox{in } \Omega \\
		\nabla\hat{\bf y} & = & \Phi(t) & \mbox{on } \Gamma^D \\
		\hat{\bf y} & = & \boldsymbol{\varphi}(t) & \mbox{on } \Gamma^D\cup\Gamma^M \\
	\end{array}\right.
\end{equation}
supplemented with the following natural boundary conditions
\begin{equation} \label{eq:pb_BC2}
	\left\{
	\begin{array}{rcll}
		\quad (D^2\hat y_m){\bf n} & = & \mathbf{0} & \mbox{on } \partial\Omega\setminus\Gamma^D \\
		\nabla(\Delta \hat y_m)\cdot{\bf n} & = & 0 & \mbox{on } \partial\Omega\setminus(\Gamma^D\cup\Gamma^M)
	\end{array}\right.
\end{equation}
for $m=1,2,3$, where $\hat {\bf f}$ is a fictitious force, $\Gamma^D$ is the portion of the boundary on which we enforce Dirichlet boundary conditions and $\Gamma^M$ is the portion on which we enforce mixed boundary conditions. By mixed boundary conditions, we mean that only the value of the deformation (and not its gradient) is specified on $\Gamma^M$. We thus have the following time dependent algorithm:
\begin{enumerate}
	\item \textbf{Initialization} ($t=0$):
	Obtain $\hat {\bf y}_h^{(0)}$ by solving \eqref{eq:pb_BC}-\eqref{eq:pb_BC2} with the prescribed conditions evaluated at $t=0$. Set ${\bf y}_h^{(0)} = \hat {\bf y}_h^{(0)}$;
	
	\item  \textbf{Dynamics} ($t\in(0,T]$): for $m=1,2,\ldots,M$, do
	\begin{enumerate}
		\item Obtain $\delta \hat {\bf y}_h^{(m)}$ by solving \eqref{eq:pb_BC}-\eqref{eq:pb_BC2} with the increment boundary conditions
		\begin{equation*} 
			\begin{split}
				\nabla \delta \hat {\bf y}_h^{(m)} &= \Phi(t_m)-\Phi(t_{m-1}), \quad \mbox{on } \Gamma^D \\
				\delta \hat {\bf y}_h^{(m)} &= \boldsymbol{\varphi}(t_m)-\boldsymbol{\varphi}(t_{m-1}), \quad \mbox{on } \Gamma^D\cup\Gamma^M.  \\
			\end{split}
		\end{equation*}
		
		\item Starting from $\hat {\bf y}_h^{(m)}:={\bf y}_h^{(m-1)}+\delta \hat {\bf y}_h^{(m)}$, obtain ${\bf y}_h^{(m)}$ using the  gradient flow \eqref{eq:gf_original} to minimize the discrete energy with the data evaluated at $t=t_m$
	\end{enumerate}
\end{enumerate}
Notice that the \emph{Dynamics} step solves for the deformation increments in order to take advantage of the previous time computation. Otherwise, at each timestep, the gradient flow would restart from the solution to \eqref{eq:pb_BC}-\eqref{eq:pb_BC2}.

In the case of the flapping device, the domain is the rectangle $(0, 15) \times (0, 9.6)$ with a crease along the center, namely $\Sigma = (0,15)\times \{x_2=4.8\}$. We let $\Gamma_{\text{left}}=\{x_1=0\} \times (0,9.6)$ and $\Gamma_{\text{right}}=\{x_1=15\} \times (0,9.6)$. For the initialization step ($t=0$), we set $\Gamma^M = \Gamma_{\text{left}}$ and $\Gamma^D = \Gamma_{\text{right}}$. Moreover, the fictitious force is given by
\begin{equation*}
	\hat{\bf f}(x_1,x_2) =  \begin{cases} 
		\mathbf{0} & 0 \leq x_1 \leq 10 \\
		(0, 0, 0.002)^T & 10 < x_1 \leq 15, \\
	\end{cases}
\end{equation*}
and the boundary functions $\boldsymbol{\varphi}$ and $\Phi$ are defined as follows:
\begin{equation*}
	\boldsymbol{\varphi}(x_1,x_2) = \begin{cases} 
		(x_1, x_2, 0)^T & x_1=0 \\
		(10, x_2, 5)^T & x_1=15, \\
	\end{cases}
\end{equation*}

\begin{equation*}
	\Phi(x_1,x_2) = 
	\begin{pmatrix} 0 & 0 & 1 \\ 0& 1 & 0  \end{pmatrix}^T, \quad x_1=15.
\end{equation*}

For the part (a) of the dynamics step, we set $\Gamma^M=\Gamma_{\text{left}} \cup \Gamma_{\text{right}}$ and $\Gamma^D=\emptyset$, and we define the increment function $\boldsymbol{\varphi}$, for which we compute $\boldsymbol{\varphi}(t_m)-\boldsymbol{\varphi}(t_m)$, on each side as follows. 
On $\Gamma_{\text{left}}$, $\boldsymbol{\varphi}$ is given by
\begin{equation} \label{eqn:BC_dynamics}
	\restriction{\boldsymbol{\varphi}}{\Gamma_{\text{left}}} = \begin{cases}
		\Big(x_1, 9.6t_m(1-\frac{x_2}{4.8}), \frac{x_2}{4.8}h(t_m)\Big)^T & 0 \leq x_2 < 4.8 \\
		\Big(x_1, 9.6t_m(1-\frac{x_2}{4.8}), (2-\frac{x_2}{4.8})h(t_m)\Big)^T & 4.8 \leq x_2 \leq 9.6,
	\end{cases}
\end{equation}
where 
\begin{equation*}
	h(t)=\sqrt{t(9.6-t)}.
\end{equation*}
This boundary condition forces the deformation to take a tent shape on the left side of the computational domain. The coefficients of the height function $h(t)$ on the upper and lower halfs range from $0$ to $1$ ($0$ on the ends, $1$ in the center), ensuring that the sheet has maximum height in the center and decreases linearly to $0$ on the top and bottom corners. 

On $\Gamma_{\text{right}}$ we impose
\begin{equation*}
	\restriction{\boldsymbol{\varphi}}{\Gamma_{\text{right}}}(t_m,x_1,x_2) = \Big(x_1, 9.6(1-\frac{x_2}{4.8}), h(t_m)\Big)^T, \quad (x_1,x_2)\in\Gamma_{\text{right}},
\end{equation*}
where $h(t)$ represents the maximum height of the sheet at time $t$ and is given by 
\begin{equation*}
	h(t)=\sqrt{t(9.6-t)}.
\end{equation*}
The boundary condition imposed on the right side of the computational domain leads to a deformation that has a larger third component than the deformation on the left side. The resulting deformation is used as the initial deformation in part (b) of the dynamics step, which does not impose a boundary condition on this side of the domain. Starting the main gradient flow with a deformation which is higher on the right side leads to non-tent shapes, and its necessity is confirmed in practical experiments.

For the gradient flow (part (b) of the dynamics step), $\Gamma^M = \Gamma_{\text{left}}$ with prescribed value given by \eqref{eqn:BC_dynamics},  while $\Gamma^D=\emptyset$. 

The subdivision consists of 2,192 quadrilaterals and 30,267 degrees of freedom (Figure \ref{fig:flap_initial}). The initial deformation is the identity. We prescribe physical time step $\Delta t= 0.1$, pseudo-time step $\tau = 0.01$, and run the experiment for M=4 physical time steps. The stopping criterion for the gradient flow \eqref{eq:gf_original} is $tol = 10^{-6}$, and $\theta^2=10^{-3}$. Figure \ref{fig:results_flapping} in the introduction shows the equilibrium deformation at each of the four physical time steps, colored by the local isometry defect 
\begin{equation*}
	\| \nabla {\bf y}^T{\bf y} - {\bf I}_2 \|_{L^2(T)}, \quad T\in\mathcal{T}_h,
\end{equation*}
where blue represents a lower isometry defect and red represents a higher isometry defect. A high isometry defect is observed around the point where the two creases meet. This helps explain why we do not observe the same results using a bending energy with isometry constraint. Because the bending model (approximately) enforces the constraint $\sum_{T \in \mathcal{T}_h} \int_T (\nabla {\bf y}^T \nabla {\bf y} - {\bf g}) = 0$ (see \cite{Prestrain_BGNY_2022}), the gradient flow eliminates the area of high defect by ``pushing'' its location along the crease and outside the computational domain, thereby leading to the tent shape in Figure \ref{fig:preasym_flapping}.
\begin{figure}[htbp!] 
	\begin{center}
		\includegraphics[width=4.00cm]{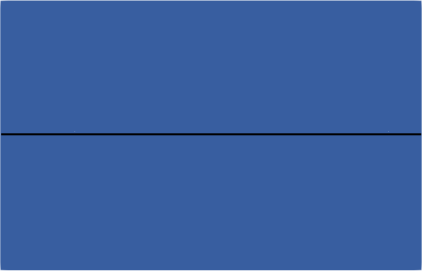}
		\hspace{1cm}
		\includegraphics[width=4.00cm]{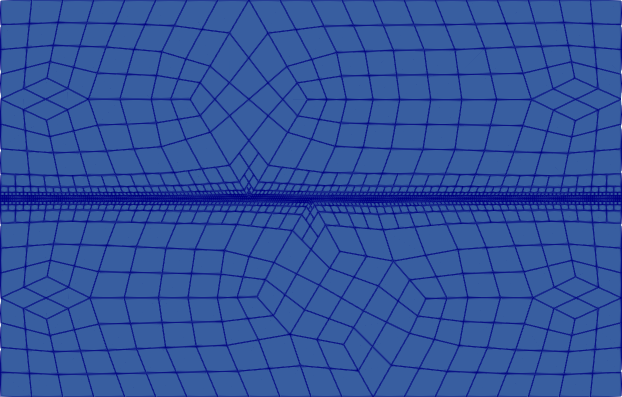}
	\end{center}
	\caption{Computational domain (left) and mesh (right) for the flapping device experiment.}
	\label{fig:flap_initial}
\end{figure}

\begin{figure}[htbp!]
	\begin{center}
		\includegraphics[width=4.00cm]{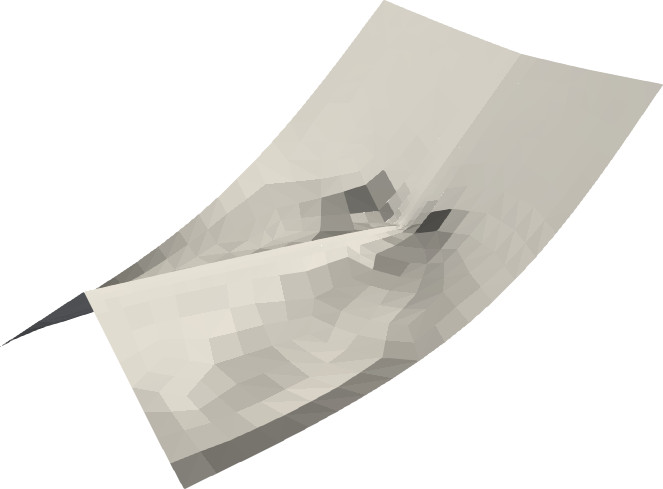}
		\hspace{1.5cm}
		\includegraphics[width=4.00cm]{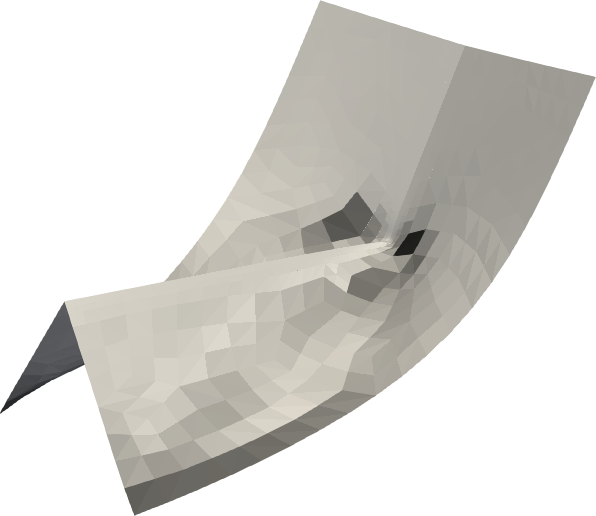} \\
		\vspace{1.3cm}
		\includegraphics[width=4.00cm]{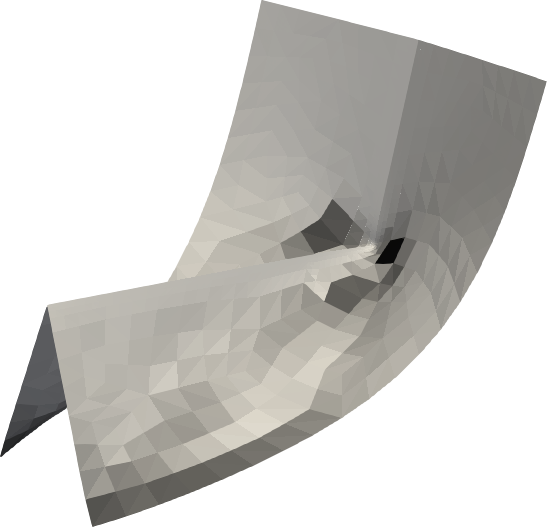}
		\hspace{1.5cm}
		\includegraphics[width=4.00cm]{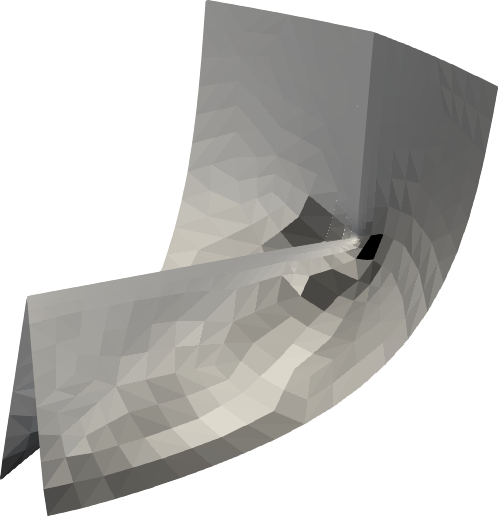}
	\end{center}
	\caption{Final configurations for the flapping device experiment, colored by prestrain defect (Black = $0.0041$, White = $0$).  Left to right and top to bottom: $m = 1, 2, 3$, and $4$.}
	\label{fig:results_flapping}
\end{figure}

\bibliography{bibliography}

\providecommand{\bysame}{\leavevmode\hbox to3em{\hrulefill}\thinspace}
\providecommand{\MR}{\relax\ifhmode\unskip\space\fi MR }
\providecommand{\MRhref}[2]{%
  \href{http://www.ams.org/mathscinet-getitem?mr=#1}{#2}
}
\providecommand{\href}[2]{#2}
\begin{thebibliography}{10}

\bibitem{bonitoMc}
S{\"o}ren Bartels, Andrea Bonito, and Peter Hornung, \emph{Fine mechanical
  properties of cardboard boxes}, In preparation.

\bibitem{BBH2021}
\bysame, \emph{Modeling and simulation of thin sheet folding}, Interfaces and
  Free Boundaries \textbf{24} (2022), no.~4, 459--485.

\bibitem{bartels2022error}
S{\"o}ren Bartels, Andrea Bonito, and Philipp Tscherner, \emph{Error estimates
  for a linear folding model}, IMA Journal of Numerical Analysis (2023),
  drad004.

\bibitem{BT09}
Amir Beck and Marc Teboulle, \emph{A fast iterative shrinkage-thresholding
  algorithm for linear inverse problems}, SIAM Journal on Imaging Sciences
  \textbf{2} (2009), no.~1, 183--202.

\bibitem{Wrinkling_BK_2014}
Peter Bella and Robert Kohn, \emph{Metric-induced wrinkling of a thin elastic
  sheet}, Journal of Nonlinear Science \textbf{24} (2014), 1147--1176.

\bibitem{BLS2016}
Kaushik Bhattacharya and Marta Lewicka~Mathias Sch{\"a}ffner, \emph{Plates with
  incompatible prestrain}, Archive for Rational Mechanics and Analysis
  \textbf{221} (2016), no.~1, 143--181.

\bibitem{BGM_2022}
Andrea Bonito, Diane Guignard, and Angelique Morvant, \emph{Numerical
  approximations of thin structure deformations}, Comptes Rendus. M\'ecanique
  (2023), 1--37, Online first.

\bibitem{Prestrain_BGNY_2022}
Andrea Bonito, Diane Guignard, Ricardo~H. Nochetto, and Shuo Yang, \emph{{LDG}
  approximation of large deformations of prestrained plates}, Journal of
  Computational Physics \textbf{448} (2022), 110719.

\bibitem{Prestrain_theoretical_BGNY}
\bysame, \emph{Numerical analysis of the {LDG} method for deformation of
  prestrained plates}, IMA Journal of Numerical Analysis \textbf{43} (2023),
  no.~2, 627--662.

\bibitem{Conv_adaptive_dG_BN_2010}
Andrea Bonito and Ricardo~H. Nochetto, \emph{Quasi-optimal convergence rate of
  an adaptive discontinuous {G}alerkin method}, SIAM Journal on Numerical
  Analysis \textbf{48} (2010), 734--771.

\bibitem{DG_largebending_BNN_2021}
Andrea Bonito, Ricardo~H. Nochetto, and Dimitrios Ntogkas, \emph{{DG} approach
  to large bending plate deformation with isometry constraint}, Mathematical
  Models and Methods in Applied Sciences \textbf{31} (2021), 133--175.

\bibitem{BNY2023}
Andrea Bonito, Ricardo~H. Nochetto, and Shuo Yang, \emph{{$\Gamma$}-convergent
  {LDG} method for large bending deformations of bilayer plates}, arXiv
  preprint arXiv:2301.03151 [math.NA] (2023).

\bibitem{BNY2022b}
Lucas Bouck, Ricardo~H. Nochetto, and Shuo Yang, \emph{Convergent {FEM} for a
  membrane model of liquid crystal polymer networks}, arXiv preprint
  arXiv:2209.04754 [math.NA] (2022).

\bibitem{BNY2022a}
\bysame, \emph{Reduced membrane model for liquid crystal polymer networks:
  Asymptotics and computation}, arXiv preprint arXiv:2210.02710 [math.NA]
  (2022).

\bibitem{BKN2013}
Jeremy Brandman, Robert~V. Kohn, and Hoai-Minh Nguyen, \emph{Energy scaling
  laws for conically constrained thin elastic sheets}, Journal of Elasticity
  \textbf{113} (2013), no.~2, 251--264.

\bibitem{CD15}
Antoine Chambolle and Charles~H. Dossal, \emph{On the convergence of the
  iterates of ``{F}ast {I}terative {S}hrinkage/{T}hresholding algorithm''},
  Journal of Optimization Theory and Applications \textbf{166} (2015), no.~3,
  25.

\bibitem{CM2008}
Sergio Conti and Francesco Maggi, \emph{Confining thin elastic sheets and
  folding paper}, Archive for Rational Mechanics and Analysis \textbf{187}
  (2008), no.~1, 1--48.

\bibitem{DPE2010}
Daniele~A. {Di Pietro} and Alexandre Ern, \emph{Discrete functional analysis
  tools for discontinuous {G}alerkin methods with application to the
  incompressible {N}avier--{S}tokes equations}, Mathematics of Computation
  \textbf{79} (2010), no.~271, 1303--1330.

\bibitem{DPE2011}
\bysame, \emph{Mathematical aspects of discontinuous {G}alerkin methods},
  Math{\'e}matiques et Applications, vol.~69, Springer, Berlin Heidelberg,
  2012.

\bibitem{Membrane_DR_1995}
Herv\'e~Le Dret and Annie Raoult, \emph{The nonlinear membrane model as a
  variational limit of nonlinear three-dimensional elasticity}, Journal de
  Math{\'e}matiques Pures et Appliqu{\'e}es \textbf{73} (1995), 549--578.

\bibitem{NonEuclidPlates_ESK_2009}
Efi Efrati, Eran Sharon, and Raz Kupferman, \emph{Elastic theory of
  unconstrained non-{E}uclidean plates}, Journal of the Mechanics and Physics
  of Solids \textbf{57} (2009), 762--775.

\bibitem{EG2021}
Alexandre Ern and Jean-Luc Guermond, \emph{Finite elements {\I}: Approximation
  and interpolation}, vol.~72, Springer Nature, Switzerland, 2021.

\bibitem{Flytrap_FSDM}
Yo{\"e}l Forterre, Jan Skotheim, Jacques Dumais, and L~Mahadevan, \emph{How the
  venus flytrap snaps}, Nature \textbf{433} (2005), 421--425.

\bibitem{Bending1_FJM_2002}
Gero Friesecke, Richard James, and Stefan M{\"u}ller, \emph{Rigorous derivation
  of nonlinear plate theory and geometric rigidity}, Comptes Rendus de
  l'Acad{\'e}mie des Sciences Paris Series 1 \textbf{334} (2002), 173--178.

\bibitem{Bending2_FJM_2002}
\bysame, \emph{A theorem on geometric rigidity an the derivation of nonlinear
  plate theory from three dimensional elasticity}, Communications on Pure and
  Applied Mathematics \textbf{25} (2002), 1461--1506.

\bibitem{Hierarchy_FJM_2006}
\bysame, \emph{A hierarchy of plate models derived from nonlinear elasticity by
  {G}amma-convergence}, Archive for Rational Mechanics and Analysis
  \textbf{180} (2006), 183--236.

\bibitem{Bending3_FJMM_2003}
Gero Friesecke, Richard James, Stefan M{\"u}ller, and M.G. Mora,
  \emph{Derivation of nonlinear bending theory for shells from three
  dimensional nonlinear elasticity by {G}amma-convergence}, Comptes Rendus de
  l'Acad{\'e}mie des Sciences Paris Series 1 \textbf{336} (2003), 697--702.

\bibitem{KVS_2011}
Yael Klein, Shankar Venkataramani, and Eran Sharon, \emph{An experimental study
  of shape transitions and energy scaling in thin non-{E}uclidean plates},
  Physical Review Letters \textbf{106} (2011), 118303--118306.

\bibitem{LP_2011}
Omar Lakkis and Tristan Pryer, \emph{A finite element method for second order
  nonvariational elliptic problems}, SIAM Journal on Scientific Computation
  \textbf{33} (2011), 786--801.

\bibitem{L1986}
Marc Lenoir, \emph{Optimal isoparametric finite elements and error estimates
  for domains involving curved boundaries}, SIAM Journal of Numerical Analysis
  \textbf{23} (1986), 562--580.

\bibitem{Review_LM_2021}
Marta Lewicka and L.~Mahadevan, \emph{Geometry, analysis and morphogenesis:
  Problems and prospects}, Bulletin of the American Mathematical Society
  \textbf{59} (2022), 331--369.

\bibitem{lewicka2011}
Marta Lewicka and Mohammad~Reza Pakzad, \emph{Scaling laws for non-{E}uclidean
  plates and the {$W^{2,2}$} isometric immersions of {R}iemannian metrics},
  ESAIM: Control, Optimisation and Calculus of Variations \textbf{17} (2011),
  no.~4, 1158--1173.

\bibitem{Nesterov1983}
Yurii Nesterov, \emph{A method for solving the convex programming problem with
  convergence rate $\mathcal{O}(1/k^2)$}, Doklady Akademii Nauk SSSR
  \textbf{269} (1983), no.~9, 543--547.

\bibitem{Micro_NYA_2015}
Svetoslav Nikolov, Peter Yeh, and Alexander Alexeev, \emph{Self-propelled
  microswimmer actuated by stimuli-sensitive bilayered hydrogel}, ACS Macro
  Letters \textbf{4} (2015), 84--88.

\bibitem{P_2014}
Tristan Pryer, \emph{Discontinuous galerkin methods for the $p$-biharmonic
  equation from a discrete variational perspective}, Electronic Transactions on
  Numerical Analysis \textbf{41} (2014), 328--349.

\bibitem{Speck2020}
Renate Sachse, Anna Westermeier, Max Mylo, Joey Nadasdi, Manfred Bischoff,
  Thomas Speck, and Simon Poppinga, \emph{Snapping mechanics of the {V}enus
  flytrap (\emph{Dionaea muscipula})}, Proceedings of the National Academy of
  Sciences \textbf{117} (2020), no.~27, 16035--16042.

\end{thebibliography}
\bibliographystyle{amsplain}

\end{document}